\documentclass[12pt]{amsart}
\usepackage[T1]{fontenc}
\usepackage[a4paper,top=4cm, bottom=5cm, left=2.75cm, right=2.75cm]{geometry}
\usepackage[utf8]{inputenc}
\usepackage[dvipsnames]{xcolor}
\usepackage{amsthm,amsmath,bm} 
\usepackage{mathtools}
\usepackage{latexsym,amssymb}
\usepackage{stmaryrd}
\usepackage[shortlabels]{enumitem}
\usepackage[hidelinks]{hyperref}
\usepackage{orcidlink}

\renewcommand{\P}{\mathbb{P}}
\newcommand{\N}{\mathbb{N}}
\newcommand{\Z}{\mathbb{Z}}

\newcommand{\R}{\mathbb{R}}

\newcommand{\mA}{\mathcal{A}}
\newcommand{\uA}{{\underline{\mathcal{A}}}}
\newcommand{\mB}{\mathcal{B}}
\newcommand{\mC}{\mathcal{C}}
\newcommand{\mE}{\mathcal{E}}
\newcommand{\mF}{\mathcal{F}}
\newcommand{\mH}{\mathcal{H}}
\newcommand{\mS}{\mathcal{S}}
\newcommand{\mU}{\mathcal{U}}
\newcommand{\mX}{\mathcal{X}}
\newcommand{\mY}{\mathcal{Y}}

\newcommand{\Pn}[1]{\mathbb{P}^{\, #1}}
\newcommand{\bfa}{\mathbf{a}}
\newcommand{\bfb}{\mathbf{b}}
\newcommand{\bfc}{\mathbf{c}}
\newcommand{\bfe}{\mathbf{e}}

\newcommand{\ubfa}{\underline{\mathbf{a}}}
\newcommand{\bfs}{\mathbf{s}}
\newcommand{\bfeps}{\boldsymbol\epsilon}

\newcommand{\la}{\langle}
\newcommand{\ra}{\rangle}

\newcommand{\bfy}{\mathbf{y}}
\newcommand{\bfz}{\mathbf{z}}
\newcommand{\bx}{\mathbf{x}}
\newcommand{\bft}{\mathbf{t}}

\renewcommand{\k}{\Bbbk}
\newcommand{\kxo}{\k[x_0,\ldots,x_n]}
\newcommand{\hf}[1]{{\rm HF}_{#1}}
\newcommand{\reg}[1]{{\rm reg}(#1)}

\def\iff{if and only if}

\theoremstyle{plain}
\newtheorem{theorem}{Theorem}[section]
\newtheorem{lemma}[theorem]{Lemma}
\newtheorem{corollary}[theorem]{Corollary}
\newtheorem{proposition}[theorem]{Proposition}

\theoremstyle{definition}
\newtheorem{definition}[theorem]{Definition}
\newtheorem{example}[theorem]{Example}
\newtheorem{remark}[theorem]{Remark}

\title[Regularity of toric varieties with at most one singular point]{Castelnuovo-Mumford regularity of toric varieties with at most one singular point}

\author[I. García-Marco]{Ignacio García-Marco\,\orcidlink{0000-0003-4993-7577}}
\address{Instituto de Matem\'aticas y Aplicaciones (IMAULL), Secci\'on de Matem\'aticas, Fa\-cul\-tad de Ciencias, Universidad de La Laguna, 38200, La Laguna, Spain}
\email{iggarcia@ull.edu.es}

\author[P. Gimenez]{Philippe Gimenez\,\orcidlink{0000-0002-5436-9837}}
\address{Instituto de Investigaci\'on en Matem\'aticas de la Universidad de Valladolid (IMUVA), Universidad de Valladolid, 47011 Valladolid, Spain.}
\email{pgimenez@uva.es}

\author[M. Gonz\'alez-S\'anchez]{Mario Gonz\'alez-S\'anchez\,\orcidlink{0000-0002-6458-7547}}
\address{Instituto de Investigaci\'on en Matem\'aticas de la Universidad de Valladolid (IMUVA), Universidad de Valladolid, 47011 Valladolid, Spain.}
\email{mario.gonzalez.sanchez@uva.es}
 
\thanks{
This work was supported in part by the grant PID2022-137283NB-C22 funded by MICIU/AEI/ 10.13039/501100011033 and by ERDF/EU.
The third author thanks financial support from European Social Fund, {\it Programa Operativo de Castilla y Le\'on}, and {\it Consejer\'ia de Educaci\'on de la Junta de Castilla y Le\'on}.  
Financial support of the Department of Education of the Junta de Castilla y León and FEDER Funds is gratefully acknowledged (Reference: CLU-2025-1-02- IMUVA).
Part of this work was done during a research visit of the third author to Universidad de La Laguna funded by the MICIU grant RED2022-134947-T\!
}

\subjclass[2020]{Primary 13D02; Secondary 14Q20, 20M50, 11B13}

\keywords{Castelnuovo-Mumford regularity, toric variety, singularities, sumset}

\begin{document}

\begin{abstract}
We establish upper bounds for the Castelnuovo--Mumford regularity of the coordinate ring of a simplicial projective toric variety with at most one singular point. In the smooth case, our results recover the bound of Herzog and Hibi [Proc. Amer. Math. Soc. 131 (2003), 2641--2647], and therefore the Eisenbud--Goto bound. Furthermore, when the variety has exactly one singular point and dimension at least $3$, we prove that its regularity also satisfies the Eisenbud--Goto bound. The proof combines combinatorial and homological methods: we study the asymptotic behavior of the sumsets associated to the toric variety and relate it to Castelnuovo--Mumford regularity via a Hochster-like formula.
\end{abstract}

\maketitle

\section*{Introduction}

Let $\mA = \{\bfa_0,\bfa_1,\ldots,\bfa_n\}$ be a finite subset of $\N^d$ for some $d\geq 1$ and $n\geq 0$, where $\bfa_i = (a_{i1},\ldots,a_{id})$ for all $i\in \{0,\ldots,n\}$. 
Given $s\in \N$, the \textit{$s$-fold sumset} of $\mA$, $s\mA$, is defined by 
$0\mA := \{\mathbf{0}\}$ and, for $s\geq 1$, \[s\mA := \{\bfa_{i_1}+\dots+\bfa_{i_s}: 0\leq i_1\leq \dots\leq i_s \leq n\}.\] 
Additive combinatorics studies the sumsets of $\mA$ and their cardinality. Khovanskii proved in \cite{Khovanskii1992} that the function $\N \rightarrow \N$ defined by $s\mapsto |s\mA|$ is asymptotically polynomial.
Later, Colarte-Gómez, Elias and Miró-Roig in \cite{Colarte2022} and, independently, Eliahou and Mazumdar in \cite{EM2022}, gave new proofs of Khovanskii's result. In \cite{Colarte2022}, the authors associate to $\mA$ a projective toric variety whose Hilbert function coincides with the function $s\mapsto |s\mA|$ as follows. 
Set $D := \max \{ |\bfa_i| = \sum_{j=1}^d a_{ij} : i=0,\ldots,n\}$, and define $\uA = \{\ubfa_0,\ldots,\ubfa_n\} \subset \N^{d+1}$ by setting $\ubfa_i = (D-|\bfa_i|,a_{i1},\ldots,a_{id}) \in \N^{d+1}$ for all $i$. 
Given an algebraically closed field $\k$, the projective toric variety $\mX_\uA \subset \P_\k^n$ determined by $\uA$ is $\mX_\uA = V(I_\uA) \subset \Pn{n}_\k$, where $I_\uA \subset R:=\kxo$ is the toric ideal determined by $\uA$, i.e., the kernel of the homomorphism of $\k$-algebras 
\[\begin{array}{rcl}
R & \rightarrow & \k[t_0,\ldots,t_d] \\
  x_i   & \mapsto & \bft^{\ubfa_i} = t_0^{a_{i0}} t_1^{a_{i1}}\dots t_d^{a_{id}}.
\end{array}\]
The homogeneous coordinate ring of $\mX_\uA$ is $\k[\mX_\uA] = \kxo / I_\uA$. In \cite[Prop.~3.3]{Colarte2022}, the authors showed that the Hilbert function of $\k[\mX_\uA]$ satisfies $\hf{\k[\mX_\uA]}(s) = |s\mA|$ for all $s\in \N$, establishing a bridge between additive combinatorics and commutative algebra. \newline

In the case $d=1$, the relation between the combinatorics of the sumsets of $\mA$ and algebraic properties of the projective monomial curve $\mC = \mX_\uA$ is explored in \cite{Elias2022,GG2023}. The goal of this paper is to extend some of these results when $d\geq2$ in some specific cases. On the one hand, we are interested here in the structure of the sumsets of $\mA$; on the other hand, in the Castelnuovo-Mumford regularity of $\k[\mX_\uA]$ and its relation with the sumsets of $\mA$. 
\newline

Suppose that $d\geq 2$ and 
let $\mS_\uA \subset \N^{d+1}$
be the affine semigroup generated by $\uA$, i.e., $\mS_\uA = \{\lambda_0 \ubfa_0+\dots+\lambda_n \ubfa_n \mid \lambda_0,\ldots,\lambda_n \in \N\}$. The semigroup $\mS_\uA$ is said to be \textit{simplicial} when the rational cone spanned by $\uA$ has dimension $d+1$ and is minimally generated by $d+1$ rays, and the toric variety $\mX_\uA$ is called \textit{simplicial} when $\mS_\uA$ is simplicial; we restrict ourselves to this case.
Then, one can assume without loss of generality that the cone spanned by $\mS_\uA$ is the non-negative orthant, i.e., $\{D\bfeps_0,\ldots,D\bfeps_d\} \subset \uA$, where $\{\bfeps_0,\ldots,\bfeps_d\}$ is the canonical basis of $\N^{d+1}$. In terms of the set $\mA$, this means that $\{\mathbf{0},D\bfeps_1',\ldots,D\bfeps_d'\} \subset \mA$, where $\{\bfeps_1',\ldots,\bfeps_d'\}$ is the canonical basis of $\N^d$. Reordering the elements of $\mA$ if necessary, we will assume that $\mA = \{\bfa_0,\ldots,\bfa_n\}$ with $\bfa_0 = \mathbf{0}$, $\bfa_i = D\bfeps_i'$ for all $i\in \{1,\ldots,d\}$, and $|\bfa_j| \leq D$ for all $j \in \{d+1,\ldots,n\}$. \newline

Under the hypotheses described in the previous paragraph, if the group generated by $\mA-\mA$ is $\Z^d$, Curran and Goldmakher describe in \cite[Thm.~1.3]{Curran2021} the sumsets of $\mA$ asymptotically. In particular, they show that for any integer $s\geq (d+1)D^d-2-2d$, 
\begin{equation}\label{eq:sumsetreg} 
s\mA = \la \mA\ra \cap \left( \bigcap_{i=1}^d \left( sD\bfeps_i' + T_i (\mA) \right) \right),
\end{equation}
where $T_i(\mA) \subset \Z^d$ is the subsemigroup of $\Z^d$ generated by $\mA-D\bfeps_i'$ for all $i\in \{1,\ldots,d\}$.
\newline

We study the sumsets of $\mA$ when the corresponding simplicial projective toric variety $\mX_\uA \subset \P_\k^n$ has at most one singular point (i.e., it is either smooth or has a single singular point). We begin in Section \ref{sec1} translating these conditions into the shape of the set $\mA$; see Theorem \ref{thm:simplicial_smooth} for the smooth case, and Proposition \ref{prop:charact_1singularpt} for the case of a single singular point. 
Then, in Section \ref{sec2}, we define the notion of sumsets regularity of $\mA$, $\sigma(\mA)$, as the smallest integer $s \in \N$ such that for all $s' \geq s$ the sumsets $s'\mA$ behave in a predictable way (see Definitions \ref{def:sumsets_reg_smooth} and \ref{def:sumsets_reg_1psing} for the precise definition in both the smooth and the case of a single singular point); in particular, for all $s \geq \sigma(\mA)$, the equality described in (\ref{eq:sumsetreg}) holds. In the main results of this section we provide upper bounds for $\sigma(\mA)$; indeed, we prove that $\sigma(\mA) \leq d(D-2)$ in the smooth case (Theorem \ref{thm:sigma_smooth}), and $\sigma(\mA) \leq d (D-2) D$ in the one single singular point case (Theorem \ref{thm:sigma_1psing_general}).
\newline

We study the Castelnuovo-Mumford regularity of $\k[\mX_\uA]$ when the  simplicial projective toric variety $\mX_\uA$ has at most one singular point. Given a homogeneous ideal $I \subset R$, one of the most relevant algebraic invariants of $R/I$ is its  Castelnuovo-Mumford regularity, ${\rm reg}(R/I)$ (see \cite{BM93} or \cite{EisenbudGoto1984} for several equivalent definitions). This invariant is a complexity measure of $R/I$ and provides an upper bound for the degrees of the syzygies in a minimal graded free resolution of $R/I$ as $R$-module. In 1984, Eisenbud and Goto \cite{EisenbudGoto1984} predicted that ${\rm reg}(R/I)$ should be bounded above by the multiplicity of $R/I$  for
non-degenerate prime ideals over an algebraically closed field, more precisely they conjectured the following:

\begin{quote} Suppose that $\k$ is an algebraically closed field. Let $I \subset R$ be a homogeneous prime ideal such that $I \subset \la x_0,\ldots,x_n \ra^2$. Then,
\[\reg{R/I} \leq \deg(R/I) - {\rm codim}(I) \, ,\]
where $\deg(R/I)$ is the degree/multiplicity of $R/I$, and ${\rm codim}(I) = n+1-\dim(R/I)$ is the codimension/height of $I$.
\end{quote}

This conjecture was refuted by Peeva and McCullough in 2018 \cite{McCullough2018} by providing examples exhibiting that the regularity of non-degenerate homogeneous prime ideals cannot be bounded above by any polynomial function
of the multiplicity of $R/I$. Later, Han and Kwak \cite{HanKwak} constructed surface counterexamples to the Eisenbud–Goto conjecture in $\mathbb P^4$.
These results motivate the study of reasonable upper bounds for the Castelnuovo-Mumford regularity within restricted families of homogeneous prime ideals. For example, it would be interesting to understand under which additional hypotheses the Eisenbud-Goto bound holds. When for a projective variety $\mX \subset \P_\k^n$ we have that its (prime, homogeneous) vanishing ideal $I = I(\mX) \subset R$ satisfies that $\reg{R/I} \leq \deg(R/I) - {\rm codim}(I)$, we will say that \textit{$\mX$ satisfies the Eisenbud-Goto bound}. Gruson, Lazarsfeld and Peskine proved in 1983 \cite{GLP} that reduced and irreducible projective curves satisfy the Eisenbud-Goto bound (see also \cite{N2014} and \cite{GG2023} for elementary combinatorial proofs of this result for monomial curves). 
The bound also holds for arithmetically Cohen–Macaulay varieties \cite{EisenbudGoto1984}, and hence for projectively normal toric varieties.
In the toric setting, Herzog and Hibi \cite{Herzog2003} proved an upper bound for the Castelnuovo--Mumford regularity of smooth simplicial projective toric varieties, implying in particular that they satisfy the Eisenbud--Goto bound.
\newline

In Section \ref{sec3}, we begin by relating $\sigma(\mA)$, the sumsets regularity of $\mA$, to ${\rm reg}(\k[\mX_\uA])$, the Castelnuovo-Mumford regularity of the coordinate ring of the corresponding toric variety, whenever $\mX_\uA$ has at most one singular point. 
For this purpose, our main tool is the Hochster-like formula for the Betti numbers (of the coordinate ring) of a toric variety studied in \cite{Briales2000} which, as a byproduct,  describes the Castelnuovo-Mumford regularity of the coordinate ring of a projective toric variety in terms of the reduced homology of a family of simplicial complexes associated to the semigroup $\mS_\uA$. Using this, we prove in Theorem \ref{thm:reg_sigma_smooth} that $\reg{\k[\mX_\uA]} = \sigma(\mA)$ whenever $\mX_\uA$ is smooth, and in Theorem \ref{thm:reg_sigma_1psing}  that  $\reg{\k[\mX_\uA]} \leq \sigma(\mA) + 1$ whenever $\mX_\uA$ has a single singular point.
As a direct consequence, the results in Section \ref{sec2} yield upper bounds on ${\rm reg}(\k[\mX_\uA])$; see Corollaries \ref{cor:reg_smooth} and \ref{cor:reg_1sing}, respectively. In particular, we provide an alternative proof of the results of \cite{Herzog2003} for $\mX_A$ smooth. Also, we obtain the following novel result: simplicial projective toric varieties of dimension $d \geq 3$ with at most one singular point satisfy the Eisenbud-Goto bound. \newline

Throughout the paper, we illustrate our results with various examples. Since, as noted above, the interesting case is the non Cohen-Macaulay one, all our examples are non Cohen-Macaulay, with the exception of Example~\ref{ej:sumsetreg_Veronese}.

\section{Simplicial projective toric varieties with one singular point}\label{sec1}

We assume that  $\k$ is an algebraically closed field and consider $\mX \subset \P_\k^{n}$ a simplicial projective toric variety with at most one singular point. Let $\mX = \mX_\mB = V(I_\mB)$, where $\mB = \{\bfb_0,\ldots,\bfb_n\} \subset \N^{d+1}$ satisfies that $\bfb_i = (b_{i0},\ldots,b_{id})$, $|\bfb_i| = \sum_{j = 0}^d b_{ij} = D$ for all $i \in \{0,\ldots,n\}$ and $\bfb_i = D \bfeps_i$ for $i \in \{0,\ldots,d\}$, with $\{\bfeps_0,\ldots,\bfeps_d\}$ the canonical basis of $\N^{d+1}$. In this section, we reformulate the condition that $\mX$ has at most one singular point in terms of the set $\mB$.

The affine charts of $\mX_\mB$ are $\{\mX_\mB\cap \mU_i\}_{i=0}^n$, where $\mU_i = \Pn{n}_\k \setminus V(x_i)$ for all $i=0,\ldots,n$. Moreover, since $\mX_\mB$ is simplicial, the variety is already covered by the first $d+1$ charts:
\begin{equation}
\mX_\mB = \cup_{i=0}^d \left( \mX_\mB \cap \mU_i \right) \, .
\label{eq:proy_aff_charts}
\end{equation}
Hence, the study of the singularities of $\mX_\mB$ reduces to these affine charts.
Indeed, suppose that $P = (p_0:\dots:p_n) \in \mX_\mB$ and $p_i \notin \mU_i$ for all $i=0,\ldots,d$. Then, $p_0=\dots=p_d=0$. For all $j=d+1,\ldots,n$,  the binomial $f_j = x_j^D - \prod_{k=0}^d x_k^{b_{jk}}$ belongs to $I_\mB$. Since $f_j(P) = 0$, then $p_j=0$ for all $j=d+1,\ldots,n$, which is impossible. This proves \eqref{eq:proy_aff_charts}. For all $i=0,\ldots,d$ and all $j=d+1,\ldots,n$, denote 
\[\bfb_j^{(i)}  \coloneq (b_{j,1},\ldots,b_{j,i-1},b_{j,i+1},\ldots,b_{j,d}) \in \N^d \, ,\]
and \begin{equation}\label{eq:beforelemma}
    \mB^{(i)} = \{D\bfeps_1',\ldots,D\bfeps_d',\bfb_{d+1}^{(i)},\ldots,\bfb_n^{(i)}\} \subset \N^d,
     \end{equation} where $\{\bfeps_1',\ldots,\bfeps_d'\}$ is the canonical basis of $\N^d$. With these notations, the affine chart $\mX_\mB \cap \mU_i$ is homeomorphic to
the simplicial affine toric variety $\mY_i  \coloneq V \left( I_{\mB^{(i)}} \right)$, for all $i=0,\ldots,d$. The following results characterize smoothness of affine toric varieties and simplicial projective toric varieties.

\begin{lemma}[{\cite[Thm.~5.4]{BG2015}}] \label{lem:Cox}
Let  $\mB = \{\bfb_1,\ldots,\bfb_n\} \subset \N^d$ a finite set of nonzero vectors. Consider the affine toric variety $\mY_\mB = V(I_\mB) \subset {\mathbb A}_\k^n$ determined by $\mB$. The following statements are equivalent:
\begin{enumerate}[(a)]
    \item\label{lem:Cox_1} $\mY_\mB$ is smooth.
    \item\label{lem:Cox_2} $\mathbf{0} = (0,\ldots,0) \in {\mathbb A}_\k^n$ is a regular point of $\mY_\mB$.
    \item\label{lem:Cox_3} The affine semigroup $\mS_\mB = \la \mB \ra$ admits a system of generators with $\dim({\mathbb Q} \mB)$ elements.
\end{enumerate}
\end{lemma}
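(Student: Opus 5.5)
We prove the cycle (a)$\Rightarrow$(b)$\Rightarrow$(c)$\Rightarrow$(a). The first implication is trivial, since $\mathbf{0}$ lies on $\mY_\mB$: every $\bfb_j$ is nonzero, so each binomial of $I_\mB$ vanishes at the origin. For the rest, set $r = \dim(\mathbb{Q}\mB)$. Then $\dim \mY_\mB = r$, because the coordinate ring $\k[\mS_\mB]$ is a domain whose field of fractions has transcendence degree equal to the rank of the group $\Z\mB$.

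For (c)$\Rightarrow$(a), suppose $\mS_\mB$ is generated by $r$ elements $\bfc_1,\ldots,\bfc_r$. These must be linearly independent, since they span $\mathbb{Q}\mB$. Hence $\mS_\mB \cong \N^r$ and $\k[\mS_\mB] \cong \k[y_1,\ldots,y_r]$, so $\mY_\mB \cong \mathbb{A}^r_\k$. This isomorphism is one of abstract affine varieties: it does not depend on the embedding given by $\mB$, since $\k[\mY_\mB] = \k[x_1,\ldots,x_n]/I_\mB \cong \k[\mS_\mB]$. Therefore $\mY_\mB$ is smooth.

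The main step is (b)$\Rightarrow$(c), and we argue via the Zariski tangent space at the origin. The local ring $\mathcal{O}_{\mY_\mB,\mathbf{0}}$ is the localization of $\k[\mS_\mB]$ at the maximal ideal $\mathfrak{m} = (\bft^{\bfb} : \bfb \in \mS_\mB\setminus\{\mathbf 0\})$. This ideal is $\N^d$-graded, and $\mathfrak{m}/\mathfrak{m}^2$ has as a $\k$-basis the classes of $\bft^{\bfb}$ with $\bfb \in (\mS\setminus 0)\setminus 2(\mS\setminus 0)$, that is, with $\bfb$ irreducible in $\mS_\mB$. Because $\mS_\mB \subset \N^d$ is pointed, its irreducibles form its unique minimal generating set (the Hilbert basis). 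Consequently the embedding dimension at the origin equals the number $h$ of minimal generators, and regularity at $\mathbf{0}$ means $h = \dim \mathcal{O}_{\mY_\mB,\mathbf{0}} = r$. This yields a generating system with $r$ elements, which is (c).

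The delicate points are twofold. First, graded localization must not change $\dim_\k \mathfrak{m}/\mathfrak{m}^2$; this holds because $\mathfrak{m}/\mathfrak{m}^2$ is already an $R_\mathfrak{m}$-module. Second, the local dimension at $\mathbf{0}$ must be $r$, and this holds because $\mY_\mB$ is irreducible. Alternatively, this is exactly the content of \cite[Thm.~5.4]{BG2015}, which may simply be cited.
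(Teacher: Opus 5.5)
Your proof is correct, and it takes a different route from the paper in the trivial sense that the paper gives no argument at all: it takes the equivalence directly from \cite[Thm.~5.4]{BG2015}, which is also the fallback you mention at the end.

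Your self-contained argument is the standard one behind the characterization of smooth affine toric varieties. It has two steps:
\begin{enumerate}[(i)]
\item (c)$\Rightarrow$(a): $r$ generators spanning an $r$-dimensional space must be linearly independent, so $\k[\mS_\mB]$ is a polynomial ring.
\item (b)$\Rightarrow$(c): because $\mathfrak{m}$ and $\mathfrak{m}^2$ are spanned by monomials, the embedding dimension at the torus-fixed point $\mathbf{0}$ equals the size of the Hilbert basis of the pointed semigroup $\mS_\mB$. Regularity then forces this size to be $\dim \mY_\mB = r$.
\end{enumerate}

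The checks you flag are the right ones. Localizing does not change $\mathfrak{m}/\mathfrak{m}^2$, since it is already a vector space over $R/\mathfrak{m}=\k$. Every closed point of an affine domain has local dimension $r$.

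One small point in (a)$\Rightarrow$(b) could be stated more precisely. $I_\mB$ is homogeneous for the grading $\deg x_i=\bfb_i\in\N^d\setminus\{\mathbf{0}\}$, and the only monomial of degree $\mathbf{0}$ is $1$. Hence $I_\mB\subset (x_1,\ldots,x_n)$ and $\mathbf{0}\in\mY_\mB$.

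The citation is shorter. Your version makes explicit the fact the paper actually relies on in the proof of Proposition~\ref{prop:charact_1singularpt}: deciding whether an affine chart is smooth amounts to counting the minimal generators of $\la \mB^{(i)}\ra$, because that count is exactly the embedding dimension at the distinguished point.
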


\begin{theorem} [{\cite[Thm.~2.1]{Herzog2003}}] \label{thm:simplicial_smooth}
Let $\mX \subset \Pn{n}_\k$ be a simplicial projective toric variety of dimension $d$. 
Then, $\mX$ is smooth \iff{} there exist a number $D \in \Z_{>0}$ and a set $\mB = \{\bfb_0,\ldots,\bfb_n\} \subset \N^{d+1}$, such that $|\bfb_i| = D$ for all $i=0,\ldots,n$,
\[ \{ \bfeps_i + (D-1) \bfeps_j \mid 0 \leq i , j \leq d  \} \subset \mB \, ,\]
and $\mX = \mX_\mB$.
\end{theorem}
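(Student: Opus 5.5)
Since $\mX$ is simplicial of dimension $d$, we may write $\mX=\mX_\mB$ with $\mB\subset\N^{d+1}$, $|\bfb_i|=D$ for all $i$ and $\bfb_i=D\bfeps_i$ for $0\le i\le d$, as set up at the start of this section. By \eqref{eq:proy_aff_charts}, $\mX_\mB$ is smooth if and only if every chart $\mX_\mB\cap\mU_i$, $0\le i\le d$, is smooth, i.e. if and only if every simplicial affine toric variety $\mY_i=V(I_{\mB^{(i)}})$ is smooth. Lemma~\ref{lem:Cox} turns this into a semigroup condition: $\mY_i$ is smooth if and only if $\la\mB^{(i)}\ra\subset\N^d$ is generated by $d$ elements, since $\dim(\mathbb{Q}\mB^{(i)})=d$ because $D\bfeps_j'\in\mB^{(i)}$. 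The whole proof thus reduces to showing that, for a fixed $i$,
\[
\la\mB^{(i)}\ra \text{ is generated by } d \text{ elements} \iff \bfeps_i+(D-1)\bfeps_j\in\mB \text{ for all } j\neq i .
\]
One also has to be careful with the normalization: a priori $\mX$ may be presented by some $\mB$, and the statement allows replacing it by another set with the same $D$ (or a different one) defining the same variety. I would first fix the normalized presentation with $\bfb_i=D\bfeps_i$ and $\gcd$ conditions, for instance that the group generated by $\mB$ together with the degree lattice is as large as possible. If needed, I would rescale using the fact that the toric variety depends only on the affine lattice structure of $\mB$ up to isomorphism.

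For ($\Leftarrow$), take the chart $i$. The image of $\bfeps_i+(D-1)\bfeps_j$ in $\mB^{(i)}$ is $(D-1)\bfeps_j'$ (reindexing coordinates), and the image of $D\bfeps_i$ is $\mathbf 0$. The key is to dehomogenize correctly. The coordinate ring of the chart $x_i\neq0$ is generated by the monomials $\bft^{\bfb_k}/t_i^D$, which in the exponent lattice corresponds to $\bfb_k-D\bfeps_i$. In these coordinates the elements $\bfeps_i+(D-1)\bfeps_j-D\bfeps_i=(D-1)(\bfeps_j-\bfeps_i)$ together with the remaining ones generate the semigroup, so I would work directly with $\{\bfb_k-D\bfeps_i\}$ rather than the projection $\bfb^{(i)}$. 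Then $\bfb_k-D\bfeps_i=\sum_{j\neq i} b_{kj}(\bfeps_j-\bfeps_i)$, and the $d$ vectors $\bfeps_j-\bfeps_i$ span a lattice containing every element. The semigroup is generated by these $d$ vectors up to the scaling coming from the ambient lattice $\Z\mB$. Because $(D-1)(\bfeps_j-\bfeps_i)$ and $D(\bfeps_j-\bfeps_i)$ both lie in it, $\bfeps_j-\bfeps_i$ lies in the group, and the semigroup equals the full orthant semigroup $\bigoplus_{j\ne i}\N(\bfeps_j-\bfeps_i)$ intersected with that group. This semigroup is therefore free on $d$ generators, and Lemma~\ref{lem:Cox} gives smoothness.

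For ($\Rightarrow$), suppose $\la\{\bfb_k-D\bfeps_i\}\ra$ is free of rank $d$. Since it is simplicial with extremal rays along $\bfeps_j-\bfeps_i$, its $d$ generators must be the primitive elements $\mathbf{g}_j$ on these rays. Each such generator must itself be one of the $\bfb_k-D\bfeps_i$, since minimal generators of $\la\mB\ra$-type semigroups lie in the generating set. Write $\mathbf{g}_j=c_j(\bfeps_j-\bfeps_i)$, so $\bfb_k=(D-c_j)\bfeps_i+c_j\bfeps_j$. Doing this for all charts $i$ gives elements $(D-c)\bfeps_i+c\,\bfeps_j$ on every edge, with both endpoints minimal from each side. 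Freeness at chart $i$ forces all other points on the edge to be multiples of $\mathbf{g}_j$ from $D\bfeps_i$, and freeness at chart $j$ forces the same from $D\bfeps_j$. This yields $c\mid D$ on every edge, and one then shows the $c$'s agree, via lattice compatibility across the charts. Replacing $D$ by $D/c$ and dividing $\mB$ accordingly, which by the normalization gives the same variety, produces the required $\bfeps_i+(D-1)\bfeps_j$. The main obstacle is this last step: showing the edge step sizes are uniform and that rescaling $\mB$ preserves $\mX$. I would handle it by noting that freeness at chart $i$ means every $\bfb_k-D\bfeps_i$ is an $\N$-combination of the $\mathbf{g}_j$, so every coordinate of each $\bfb_k$ off position $i$ is divisible by the corresponding $c$. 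Combining this over all $i$ forces a common divisor of all entries, which is exactly the rescaling.
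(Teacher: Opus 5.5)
Your ($\Leftarrow$) direction is wrong as written, and so is the per-chart reduction you announce. At the chart $x_i\neq 0$ the relevant points of $\mB$ are those near the vertex $D\bfeps_i$, namely $(D-1)\bfeps_i+\bfeps_j$, whose image is $\bfeps_j-\bfeps_i$ (equivalently $\bfeps_j'$ in $\mB^{(i)}$). The point $\bfeps_i+(D-1)\bfeps_j$ that you use maps to $(D-1)(\bfeps_j-\bfeps_i)$. Knowing that $(D-1)\mathbf v$ and $D\mathbf v$ lie in the semigroup puts $\mathbf v$ in its group, but not in the semigroup. Your assertion that the semigroup equals the orthant intersected with its group is a normality statement, and it is false in general.

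Concretely, take $d=1$, $D=3$, $\mB=\{(3,0),(0,3),(1,2)\}$. Your condition $\bfeps_0+2\bfeps_1\in\mB$ holds for $i=0$. Yet $\la\mB^{(0)}\ra=\la 2,3\ra$, and $\mX_\mB=V(x_2^3-x_0x_1^2)$ is the cuspidal cubic, singular at $(1:0:0)$. The same example breaks the other direction of your per-chart equivalence: the chart $x_1\neq0$ is smooth ($\la 1,3\ra$), although $\bfeps_1+2\bfeps_0\notin\mB$. Only the global statement, after normalizing the gcd, is true.

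The repair is immediate because the hypothesis set is symmetric: it also contains $(D-1)\bfeps_i+\bfeps_j$ for all $j\neq i$. Hence $\la\mB^{(i)}\ra$ contains $\bfeps_1',\ldots,\bfeps_d'$, so it equals $\N^d$, and Lemma~\ref{lem:Cox} gives smoothness of every chart.

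Your ($\Rightarrow$) direction is essentially the argument the paper gives in the proof of Proposition~\ref{prop:charact_1singularpt} (for this theorem itself the paper only cites Herzog--Hibi). Your $c_j$ at chart $i$ is the paper's $\lambda_{ij}$, freeness gives $\lambda_{ij}\mid b_{\ell j}$ for all $\ell$, and a gcd normalization finishes. The uniformity you leave to ``lattice compatibility'' follows directly from that divisibility:
\begin{itemize}
\item applied to $(D-\lambda_{kj})\bfeps_k+\lambda_{kj}\bfeps_j$, it gives $\lambda_{ij}\mid\lambda_{kj}$;
\item applied to $D\bfeps_j$ and $(D-\lambda_{ji})\bfeps_j+\lambda_{ji}\bfeps_i$, it gives $\lambda_{ij}\mid D$ and $\lambda_{ij}\mid D-\lambda_{ji}$, hence $\lambda_{ij}\mid\lambda_{ji}$.
\end{itemize}
By symmetry all $\lambda_{ij}$ coincide, so their common value divides every entry of every $\bfb_\ell$. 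Dividing $\mB$ by it leaves the toric ideal unchanged and produces the required points.
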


Combining Lemma~\ref{lem:Cox} and the discussion above, one can characterize the simplicial projective toric varieties with a single singular point. We first present an example:

\begin{example} Consider the set $\mB = \{\bfb_0,\ldots,\bfb_7\} \subset \N^{3}$ with $\bfb_0 = (6,0,0), \bfb_1 = (0,6,0), \bfb_2 = (0,0,6), \bfb_3 = (0,1,5), \bfb_4 = (0,5,1), \bfb_5 = (2,0,4), \bfb_6 = (2,4,0)$ and $\bfb_7 = (4,1,1)$; let us prove that $\mX_\mB$ has a single singular point. We have that \[ \mB^{(0)} = \{(6,0),(0,6),(1,5),(5,1),(0,4),(4,0),(1,1)\}. \] Then,  $\mB^{(0)} - \{(1,5),(5,1)\}$ is the unique minimal set of generators of the affine semigroup $\la \mB^{(0)} \ra$. By Lemma \ref{lem:Cox}, the affine chart $X_\mB \cap \mU_0$ is not smooth and $P_0 = (1:0:\cdots:0)$ is a singular point of $X_\mB$.
Also, we have that \[ \mB^{(1)} = \mB^{(2)} = \{(6,0),(0,6),(0,5),(0,1),(2,4),(2,0),(4,1)\}. \] Then, $\{(2,0),(0,1)\}$ is the unique minimal set of generators of the affine semigroup $\la \mB^{(1)} \ra = \la \mB^{(2)} \ra$. By Lemma \ref{lem:Cox}, the affine charts $X_\mB \cap \mU_1$ and $\mX_\mB \cap \mU_2$ are both smooth. Hence, we conclude that $P_0$ is the only singular point of $\mX_\mB$.

\end{example}

\begin{proposition} \label{prop:charact_1singularpt}
Let $\mB = \{\bfb_0,\ldots,\bfb_n\} \subset \N^{d+1}$ a set of nonzero vectors with $\bfb_i = (b_{i0},\ldots,b_{id})$ for all $i$. 
Assume that $\bfb_i = D\bfeps_i$, for $i = 0,\ldots,d,$ and $|\bfb_i| = D$ for all $i=0,\ldots,n$, for some $D \in \Z_{>0}$, and denote by $\mX_\mB = V(I_\mB) \subset \Pn{n}_\k$ the simplicial projective toric variety determined by $\mB$.
\begin{enumerate}[(1)]
\item\label{prop:charact_1singularpt_1} If $\mX$ has exactly one singular point, that point is $P_i = (0:\dots:0:1^{(i)}:0:\dots:0)$, for some $i \in \{0,\ldots,d\}$.

\item\label{prop:charact_1singularpt_2} The only singular point of $\mX$ is $P_0$ if and only if 
\[\{(D-1) \bfeps_i + \bfeps_j \, \vert \, 1 \leq i, j \leq d\} \cup \{e \bfeps_0 + (D-e) \bfeps_j \, \vert \, 1 \leq j \leq d\} \subset \mB \, ,\]
where $e\in \Z_{>0}$ is a divisor of $D$ that divides $b_{i0}$ for all $i\in \{0,\ldots,n\}$, and if $e=1$ then there exists $j \in \{1,\ldots,d\}$ such that  $(D-1) \bfeps_0 + \bfeps_j \notin \mB$.
\end{enumerate}
\end{proposition}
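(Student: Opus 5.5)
The plan is to work chart by chart, using the decomposition \eqref{eq:proy_aff_charts} and the identification of $\mX_\mB\cap\mU_i$ with the simplicial affine toric variety $\mY_i=V(I_{\mB^{(i)}})$. Lemma~\ref{lem:Cox} then says $\mY_i$ is smooth if and only if its origin is a regular point, and this happens if and only if $\la\mB^{(i)}\ra$ is generated by $d$ elements. Since $\la\mB^{(i)}\ra$ is simplicial with extremal rays $D\bfeps_1',\ldots,D\bfeps_d'$, any generating set of $d$ elements must consist of one primitive-direction vector on each ray. So $\la\mB^{(i)}\ra$ is free if and only if it equals $\bigoplus_j c_j\N\bfeps_j'$ for some positive $c_j$.

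For (1), if $\mY_i$ is singular then, being an affine toric variety, its singular locus is torus-invariant, closed and nonempty, and it contains the origin by Lemma~\ref{lem:Cox}. The origin of $\mY_i$ corresponds to $P_i$. Hence if $\mX$ has exactly one singular point, that point lies in some chart $\mU_i$ with $i\le d$, and the origin of that chart is singular. Since the singular set is a single point, it must be $P_i$.

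For (2), the condition ``only singular point is $P_0$'' is equivalent to: $\mY_1,\ldots,\mY_d$ are smooth, and $\mY_0$ is smooth away from its origin but singular at it. I will translate each part into conditions on $\mB$.

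First, smoothness of $\mY_i$ for $i\ge1$. Here $\mB^{(i)}$ lists the coordinates $0,\ldots,d$ except $i$, and its rays are $D\bfeps_j$ for $j\ne i$. I claim the semigroup is free if and only if the ray generators are $(D-1)\bfeps_j\!\to\bfeps_j$-type vectors coming from $\bfe_j+(D-1)\bfeps_i$ for $j\ge1$, and from $e\bfeps_0+(D-e)\bfeps_i$ in the $0$-th direction. Concretely, freeness forces $\la\mB^{(i)}\ra=\N\bfeps_0 e_i'\oplus\bigoplus_{j\ne0,i}c_j\N\bfeps_j$. I then need to show that $c_j=1$ for $j\ge1$. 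The reason is that the generator on ray $j$ must be the $j$-part of some $\bfb_k$ having zero entries elsewhere except possibly coordinate $i$. Working in chart $j$ as well, one obtains $\bfeps_j+(D-1)\bfeps_i\in\mB$. For the $0$ direction, the generator is $e\bfeps_0$, and $e$ must divide every $b_{k0}$, hence also $D$. Collecting these over all $i\ge1$ gives the inclusion stated in (2), with $e$ independent of $i$ because $e$ is the gcd of $\{b_{k0}\}$ together with $D$. Conversely, these vectors give $d$ generators, as in Theorem~\ref{thm:simplicial_smooth}.

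Second, the chart $\mY_0$. Its points off the origin lie in the other charts, which are already smooth, so I only need the origin to be singular. Given the previous structure, if $e\ge2$ then the $0$-th coordinate is constrained and $\mB^{(0)}$ cannot contain all $\bfeps_i'$. The freeness of $\la\mB^{(0)}\ra$ would require $\bfeps_j\cdot$ generators coming from $(D-1)\bfeps_0+\bfeps_j$, but these have $b_{k0}=D-1$, which is not divisible by $e$. So singularity is automatic when $e\ge2$. If $e=1$, the variety is smooth at $P_0$ exactly when $(D-1)\bfeps_0+\bfeps_j\in\mB$ for all $j$, again by the analogue of Theorem~\ref{thm:simplicial_smooth}. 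This produces the extra condition for $e=1$.

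The main obstacle is the step showing that freeness of $\la\mB^{(i)}\ra$ forces $c_j=1$ in the directions $j\ge1$, while only a divisor $e$ appears in direction $0$. This asymmetry must come from combining the charts $i$ and $j$ for $i,j\ge1$, since $\mY_0$ is not assumed smooth. I would first try to reproduce the argument of \cite[Thm.~2.1]{Herzog2003} restricted to the charts $1,\ldots,d$: the lattice index computation gives $c_j c_i$ constraints through the relation $D\bfeps_j=\ldots$, and I need to check that the direction-$0$ factor $e$ decouples.
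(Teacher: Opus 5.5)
Your architecture matches the paper's. You analyze chart by chart through Lemma~\ref{lem:Cox}, get part (1) by observing that a singular chart $\mY_i$ is singular at its origin $P_i$, prove the converse of (2) by exhibiting free generators of $\la\mB^{(i)}\ra$ for $i\ge1$, and handle the $e=1$ clause via Theorem~\ref{thm:simplicial_smooth}.

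The gap is in the forward direction of (2), at exactly the step you call the ``main obstacle'': proving that smoothness of $\mY_1,\dots,\mY_d$ forces $(D-1)\bfeps_i+\bfeps_j\in\mB$ for all $i,j\ge1$. What you actually argue does not give $c_j=1$. You show that the generator of $\la\mB^{(i)}\ra$ on ray $j$ comes from some $(D-c_j)\bfeps_i+c_j\bfeps_j\in\mB$, and then appeal without explanation to ``working in chart $j$ as well''. Moreover, no argument can succeed without an ingredient you never mention: the normalization $\gcd\{b_{k\ell}\}=1$ over all entries, which the paper imposes at the start of its proof. Indeed, replacing $\mB$ by $2\mB$ and $D$ by $2D$ leaves the toric ideal unchanged, hence also the variety and its unique singular point. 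Yet no vector $(2D-1)\bfeps_i+\bfeps_j$ with $i\neq j$ lies in $2\mB$.

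The missing cross-chart step is short. Your remark that $e$ is a $\gcd$ is the right tool once you apply it in every direction.
If $\mY_i$ is smooth, then $\la\mB^{(i)}\ra=\bigoplus_{j\neq i}c^{(i)}_j\N\bfeps_j$. Projecting onto coordinate $j$ shows $c^{(i)}_j=g_j:=\gcd_k b_{kj}$, and $(D-g_j)\bfeps_i+g_j\bfeps_j\in\mB$.
Now $g_i$ divides $b_{ii}=D$ and also the $i$-th entry $D-g_j$ of that vector. Hence $g_i\mid g_j$ for every $i\in\{1,\dots,d\}$ and every $j\neq i$.
Exchanging $i,j\ge1$ gives $g_1=\dots=g_d=:g$. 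Also $g\mid g_0=:e$, so $g$ divides every entry of every $\bfb_k$, and the normalization forces $g=1$.
This gives $(D-1)\bfeps_i+\bfeps_j\in\mB$ and $e\bfeps_0+(D-e)\bfeps_i\in\mB$ for all $i,j\ge1$, with a single $e$. It is the paper's argument with the integers $\lambda_{ij}$ and the divisibility relations (a)--(e), rephrased through $\gcd$'s.

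Your claim that freeness of $\la\mB^{(0)}\ra$ would require $(D-1)\bfeps_0+\bfeps_j\in\mB$ also needs justification. The vectors $(D-1)\bfeps_i+\bfeps_j$ with $i\neq j$ put $1$ into the $j$-th coordinate projection of $\la\mB^{(0)}\ra$. So its generator on ray $j$ must be $\bfeps_j'$ itself, and this step uses $d\ge2$.
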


\begin{figure}[htbp]
\centering
\includegraphics[width=0.7\textwidth]{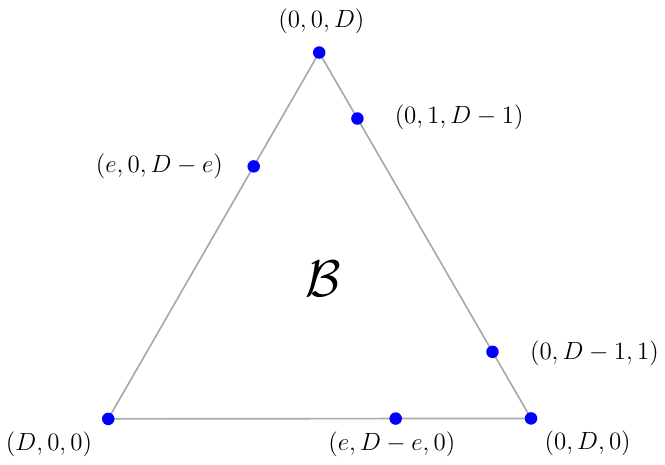}  
\caption{Shape of a set $\mB$ in Proposition~\ref{prop:charact_1singularpt} if $e\neq D$ and $d = 2$.}
\label{fig:1ptosing}
\end{figure}

\begin{remark}\label{rem:sup1psing}
In part \ref{prop:charact_1singularpt_2} of Proposition~\ref{prop:charact_1singularpt}, we distinguish two different behaviors.
\begin{enumerate}[(i)]
\item\label{rem:sup1psing_i} \underline{$e<D$}: In this case, $e \bfeps_0 + (D-e) \bfeps_j \neq e \bfeps_0 + (D-e) \bfeps_i $ for $i \neq j$.
\item\label{rem:sup1psing_ii} \underline{$e=D$}: In this case, $e \bfeps_0 + (D-e) \bfeps_j = D \bfeps_0$  for $j = 1,\ldots,d$, and for all $\bfb_i \in \mB$, such that $\bfb_i \neq (D,0,\ldots,0)$, one has that $b_{i0} = 0$. Hence, $\mB = \{D\bfeps_0\} \cup \left( \{0\} \times \mB' \right)$, where 
$I_{\mB'} \subset \k[x_1,\ldots,x_n]$ is the defining ideal of a smooth simplicial projective toric variety, by Theorem~\ref{thm:simplicial_smooth}, and $I_\mB = I_{\mB'} . \k[x_0,\ldots,x_n]$ is the extension of $I_{\mB'}$.
Therefore, the resolutions of $\k[x_0,\ldots,x_n]/I_\mB$ and $\k[x_1,\ldots,x_n]/I_{\mB'}$ are identical, and hence these two modules have the same Castelnuovo-Mumford regularity.  Geometrically, $\mX_\mB$ is a cone over the smooth variety $\mX_{\mB'}$.
\end{enumerate}
\end{remark}

\begin{proof}[Proof of Prop.~\ref{prop:charact_1singularpt}]
For $i \in \{0,\ldots,d\}$, let $\mY_i$ be the $i$-th affine chart of $\mX$, i.e. $\mY_i = V\left( I_{\mB^{(i)}} \right)$, where $\mB^{(i)} \subset \N^d$ is the set defined in (\ref{eq:beforelemma}). 
If there are two non-smooth affine charts, then, by Lemma~\ref{lem:Cox}~\ref{lem:Cox_2}, $\mX$ has at least two singular points. Thus, there is exactly one singular affine chart.
Again, by Lemma~\ref{lem:Cox}~\ref{lem:Cox_2}, the singular affine chart is $\mY_k$ \iff{} the only singular point is $P_k$. This proves \ref{prop:charact_1singularpt_1}. 

Assume now that $P_0$ is the only singular point of $\mX$. Moreover, assume that $\gcd\left( \{b_{\ell i} \mid  0\leq \ell \leq n, 0\leq i \leq d\} \right)=1$.
For all $0\leq i,j\leq d$, $i\neq j$, let 
\[\lambda_{ij}  \coloneq \min \{ \lambda \in \Z_{>0} \mid (D-\lambda) \bfeps_i + \lambda \bfeps_j \in \mB \}.\]
Since $\mY_1$ is smooth, by Lemma~\ref{lem:Cox}~\ref{lem:Cox_3} one has that \[ \{(\lambda_{10},0,\ldots,0),(0,\lambda_{12},0,\ldots,0), \ldots,(0,\ldots,0,\lambda_{1d})\} \] is the minimal generating set of $\langle \mB^{(1)} \rangle$. Hence, $\lambda_{1j} \mid b_{\ell j}$ for  $j \in \{0,2,\ldots,d\}$, $\ell \in \{0,\ldots,n\}$. As a consequence,
\begin{enumerate}[(a)]
\item\label{proof:prop_1psing_a} $\lambda_{10} \mid \lambda_{i0}$ for  $i \in \{1,\ldots,d\}$, 
\item\label{proof:prop_1psing_b} $\lambda_{1j} \mid \lambda_{ij}$ for $i,j \in \{2,\ldots,d\}, i \neq j$, and
\item\label{proof:prop_1psing_c} $\lambda_{1i} \mid \lambda_{i1}$ for  $i \in \{2,\ldots,d\}$.
\end{enumerate}

Indeed, \ref{proof:prop_1psing_a}, and \ref{proof:prop_1psing_b} are straightforward. To prove \ref{proof:prop_1psing_c} it suffices to observe that $\lambda_{1i} \mid D$ and $\lambda_{1i} \mid D - \lambda_{i1}$, so, $\lambda_{1i} \mid \lambda_{i1}$. By symmetry for all $i \in \{1,\ldots,d\}$, from \ref{proof:prop_1psing_a}, \ref{proof:prop_1psing_b} and \ref{proof:prop_1psing_c}, we deduce that 
\begin{enumerate}[(a)]
\setcounter{enumi}{3}
\item\label{proof:prop_1psing_d} $\lambda_{10} = \lambda_{i0}$ for  $i \in \{1,\ldots,d\}$, 
\item\label{proof:prop_1psing_e} $\lambda_{ij} = \lambda_{kl}$ for  $i,j,k,l \in \{1,\ldots,d\}, i \neq j, k \neq l$.
\end{enumerate}

By \ref{proof:prop_1psing_e} we have that $\lambda_{12}$ divides $b_{\ell j}$ for all $j \in \{1,\ldots,d\}$ and all $\ell \in \{0,\ldots,n\}$. In particular, $\lambda_{12}$ divides $D$ and, therefore, $\lambda_{12}$ divides $b_{\ell 0} = |\bfb_\ell| - \sum_{i = 1}^d b_{\ell i}$. We conclude that  $\lambda_{12} \mid \gcd\left( \{b_{\ell i} \mid  0\leq \ell \leq n, 0\leq i \leq d\} \right)=1$. Therefore, $\lambda_{ij} = \lambda_{12} = 1$, for all $i,j\in \{1,\ldots,d\}$, $i\neq j$.

Hence, setting $e := \lambda_{10}$ ($= \lambda_{i0}$ for all $i\in \{1,\ldots,d\}$), we have proved that 
\[\{(D-1) \bfeps_i + \bfeps_j \, \vert \, 1 \leq i, j \leq d\} \cup \{e \bfeps_0 + (D-e) \bfeps_j \, \vert \, 1 \leq j \leq d\} \subset \mB \, ,\]
$1\leq e \leq D$ is a divisor of $D$, and $e$ divides $b_{i0}$ for all $i$.
Finally, if $e=1$ there exists $j \in \{1,\ldots,d\}$ such that $(D-1) \bfeps_0 + \bfeps_j \notin \mB$; otherwise, by  Theorem~\ref{thm:simplicial_smooth}, $\mX$ is smooth.

Conversely, by Lemma~\ref{lem:Cox}~\ref{lem:Cox_3}, the affine charts $\mY_1,\ldots,\mY_d$ are smooth, since $\la \mB^{(i)} \ra = \la (e,0,\ldots,0),(0,1,0,\ldots,0),\ldots,(0,\ldots,0,1) \ra$ for all $i \in \{1,\ldots,d\}$. 
Thus, every point in $\mX$ different from $P_0 = (1:0:\dots:0)$ is a regular point.
Moreover, $\mY_0$ is the affine toric variety determined by $\mB^{(0)}$. By Lemma \ref{lem:Cox}, $\mY_0$ is not smooth and, hence, $P_0$ is the only singular point of $\mX$. 
\end{proof}

\section{Structure of the sumsets: sumsets regularity}\label{sec2}

Consider a finite set $\mA = \{\bfa_0,\ldots,\bfa_n\} \subset \N^d$, $n \geq d\geq 1$, with $\bfa_i = (a_{i1},\ldots,a_{id})$  and $|\bfa_i| \leq D$ for $i\in \{0,\ldots,n\}$, and assume that $\bfa_0 = \mathbf{0}$ and $\bfa_i = D\bfeps_i'$, $i\in \{1,\ldots,d\}$, for some $D \in \N$, $D\geq 2$. We define the homogenization of $\mA$, denoted by $\uA$, as the set $\uA = \{\ubfa_0,\ldots,\ubfa_n\} \subset \N^{n+1}$, where $\ubfa_i = (D-|\bfa_i|,a_{i1},\ldots,a_{id})$ for all $i\in \{0,\ldots,n\}$.
As usual, $\k$ is an algebraically closed field, and $\mX_\uA \subset \Pn{n}_\k$ is the simplicial projective toric variety determined by $\uA$. \newline

The sumsets of $\mA$ and $\uA$ are related as follows. Let  $\bfb \in  \N^d$, then:
\begin{equation}\label{eq:relationsumsets}
    \bfb = (b_1,\ldots,b_d) \in  s \mA \text{ if and only if } (Ds-|\bfb|,b_{1},\ldots,b_{d}) \in s \uA.
\end{equation}

In this section, we provide structure theorems for the sumsets of $\mA$ for specific classes of sets $\mA$. More specifically, we are interested in the sets $\mA$ such that $\mX_\uA$ is either smooth (Subsection~\ref{subsec:structure_smooth}) or has a single singular point (Subsection~\ref{subsec:structure_1psing}).

\subsection{The smooth case}\label{subsec:structure_smooth}

Let $\mA = \{\bfa_0,\ldots,\bfa_n\} \subset \N^d$ be a finite set, and suppose that $\uA$ defines a smooth simplicial projective toric variety $\mX_\uA \subset \Pn{n}_\k$. Hence, without loss of generality, we can assume that $\uA \subset \N^{d+1}$ satisfies the conditions of Theorem~\ref{thm:simplicial_smooth} or, equivalently, that
\[\{\mathbf{0},\bfeps_i',(D-1)\bfeps_i', \bfeps_i'+(D-1)\bfeps_j' \mid 1 \leq i,j\leq d\} \subset \mA \, .\]

For each $s\in \N$, denote $\Delta_s \coloneq \{(y_1,\ldots,y_d) \in \N^d \mid y_1+\dots+y_d \leq sD\}$. Note that for $s\geq 1$, $\Delta_s$ is the set of lattice points of a simplex. Moreover,  $s\mA \subset \Delta_s$ for all $s\in \N$. We will prove that the two sets eventually coincide. When $D = 2$, then $\mA = \Delta_1$ and we clearly have that $s \mA = \Delta_s$ for all $s \in \N$.

\begin{proposition} \label{prop:smooth_lleno}
Let $\mA = \{\bfa_0,\ldots,\bfa_n\} \subset \N^d$, $n\geq d \geq 1$, $D \geq 3$  be a set such that
\[\{\mathbf{0},\bfeps_i',(D-1)\bfeps_i', \bfeps_i'+(D-1)\bfeps_j' \mid 1 \leq i,j\leq d\} \subset \mA.\]
Then, $d(D-2)\mA = \Delta_{d(D-2)}$.
\end{proposition}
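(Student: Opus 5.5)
We reduce the statement to the homogenized setting and then build each lattice point by a direct greedy construction. By \eqref{eq:relationsumsets}, it suffices to show the following: every $\bfb=(b_0,\ldots,b_d)\in\N^{d+1}$ with $|\bfb|=sD$, where $s=d(D-2)$, lies in $s\uA$. In fact we aim to show that $\bfb$ is a sum of $s$ elements of
\[
\mE:=\{D\bfeps_i,\ (D-1)\bfeps_i+\bfeps_j \mid 0\le i,j\le d\}.
\]
By the hypothesis on $\mA$ (equivalently Theorem~\ref{thm:simplicial_smooth}), $\mE\subset\uA$. Every element of $\mE$ has total degree $D$, so the number of summands is automatically $|\bfb|/D=s$. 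The problem therefore becomes: write $\bfb$ as a nonnegative combination of elements of $\mE$.

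\textbf{Residues.} Write $b_j=q_jD+r_j$ with $0\le r_j<D$. Since $|\bfb|\equiv0 \pmod D$, we have $\sum_j r_j\equiv 0\pmod D$. The element $(D-1)\bfeps_m+\bfeps_j$ acts as $D\bfeps_m$ with one unit moved from coordinate $m$ to coordinate $j$. Fix a pivot index $m$. For each $j\ne m$ we correct the residue of coordinate $j$ in one of two ways:
\begin{itemize}[leftmargin=*]
\item use $r_j$ copies of $\bfeps_j+(D-1)\bfeps_m$, which costs $(D-1)r_j$ units of coordinate $m$; or
\item use $D-r_j$ copies of $(D-1)\bfeps_j+\bfeps_m$, which costs $(D-1)(D-r_j)$ units of coordinate $j$ itself.
\end{itemize}
After these corrections, every coordinate other than $m$ is a nonnegative multiple of $D$, which we fill with copies of $D\bfeps_j$. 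The leftover in coordinate $m$ is then automatically $\equiv0\pmod D$, by the congruence above. So the only thing to check is nonnegativity of all leftovers.

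\textbf{Choices.} For each $j\ne m$ we pick the cheaper option when it is affordable. If $q_j\ge D-r_j-1$, coordinate $j$ can pay for itself via the second option. Otherwise $b_j<(D-1)(D-r_j)+r_j\le(D-1)D$, i.e.\ coordinate $j$ is "small", and we charge it to the pivot instead. We take $m$ to be an index with $b_m$ maximal. The charge to the pivot is at most $(D-1)\sum_{j\ \text{small}} r_j$, and since each $r_j\le D-1$ this is at most $d(D-1)^2$. More carefully, the small coordinates have bounded total mass, so the remaining mass must sit largely in the large coordinates. In that regime one should use $\bfeps_j+(D-1)\bfeps_k$ with $k$ ranging over several large coordinates, not just one pivot.

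\textbf{Main obstacle.} The hardest step is this counting: proving that $|\bfb|=d(D-2)D$ always leaves enough room in the large coordinates to absorb all residue corrections. I expect this is exactly where the threshold $d(D-2)$ is forced. The extremal configurations should be those where many coordinates have $b_j=(D-1)(D-r_j)+r_j-D$, just below the self-pay threshold.

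The first thing I would try is induction on $d$, removing one coordinate at a time. Correct the residue of the smallest coordinate using the largest one. This costs at most $D-2$ summands' worth of mass beyond what that coordinate itself supplies, and then we apply the inductive hypothesis to the remaining $d$ coordinates with budget $(d-1)(D-2)$. The base case $d=1$ is the one-dimensional statement that $\{0,1,D-1,D\}$ has $(D-2)$-fold sumset $[0,(D-2)D]$. This is verified directly: any $y=qD+r$ with $r\neq0$ and $y\le (D-2)D$ is obtained as either $q\cdot D+r\cdot1$ or $(q+1-(D-r))\cdot D+(D-r)(D-1)$, and one of these two uses at most $D-2$ summands.
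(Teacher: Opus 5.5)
Your reduction to the homogenized statement (every $\bfb\in\N^{d+1}$ with $|\bfb|=d(D-2)D$ lies in $\la\mE\ra$, the number of summands then being forced) is correct, and so is your base case $d=1$. But the proposal stops exactly where the content is: you call the counting the main obstacle and only assert the inductive step. The single-pivot scheme in your ``Choices'' paragraph fails as stated. Its bookkeeping also forgets that each copy of $(D-1)\bfeps_j+\bfeps_m$ consumes one unit of coordinate $m$. For a concrete failure, take $D=5$, $d=3$, $\bfb=(3,3,20,19)$. Each coordinate equal to $3$ can only be built from three copies of $\bfeps_j+4\bfeps_k$, so with a single pivot it charges $12$ to it, and neither $20$ nor $19$ can absorb $24$. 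The proof therefore rests entirely on your final induction sketch, which is the right idea but is unverified.

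To complete it you need three checks.
\begin{itemize}
\item[(a)] \emph{The smallest coordinate can be eliminated with at most $D-2$ summands.} It satisfies $b_j\le d(D-2)D/(d+1)<(D-2)D$. Write $b_j=qD+r$. If $q+r\le D-2$, use $q$ copies of $D\bfeps_j$ and $r$ copies of $\bfeps_j+(D-1)\bfeps_m$. Otherwise $r\ge1$ and $q\le D-3$, and you use $D-r$ copies of $(D-1)\bfeps_j+\bfeps_m$ and $q+1-(D-r)$ copies of $D\bfeps_j$. In both cases $t\le D-2$.
\item[(b)] \emph{The largest coordinate $b_m$ can pay.} In the first case, $b_j\le (D-2-r)D+r$ gives $d\,b_m\ge d(D-2)D-b_j\ge (d-1)(D-2)D+r(D-1)$. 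Hence $d\bigl(b_m-r(D-1)\bigr)\ge (d-1)\bigl((D-2)D-r(D-1)\bigr)\ge 0$, since $r\le D-2$. In the second case the charge is only $D-r\le D-1\le |\bfb|/(d+1)\le b_m$ for $d\ge 2$.
\item[(c)] \emph{The inductive hypothesis must be extended upward.} The remainder lives on $d$ coordinates with degree $(d(D-2)-t)D$, which may exceed $(d-1)(D-2)D$. If $s>(d-1)(D-2)$, then $sD\ge dD>d(D-1)$, so some coordinate is $\ge D$ and you can subtract $D\bfeps_k$ until you reach degree exactly $(d-1)(D-2)D$.
\end{itemize}

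With these checks your argument is complete, and it is a symmetric repackaging of the paper's proof. The paper works in $\N^d$ and separates two cases. When $|\bfy|\le(d-1)(D-2)D$, it peels off a coordinate $y_j\le(D-2)D$ via the case $d=1$, charging the hidden homogenizing coordinate. When $|\bfy|>(d-1)(D-2)D$, it eliminates the small homogenizing coordinate itself and then applies a claim in degree exactly $(d-1)(D-2)D$. Choosing the smallest of all $d+1$ coordinates merges the two cases. It also makes explicit the nonnegativity of the subtractions, which the paper's Case~2 only asserts.
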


\begin{proof}
Consider $\mB_d := \{\mathbf{0},\bfeps_i',(D-1)\bfeps_i', \bfeps_i'+(D-1)\bfeps_j' \mid 1 \leq i,j\leq d\} \subset \N^d$ and let us prove that \[ d(D-2)\mB_d = \Delta_{d(D-2)}, \]
by induction on $d \geq 1$. As a consequence of this, we have that 
$\Delta_{d(D-2)} = d(D-2)\mB_d \subset d(D-2) \mA \subset \Delta_{d(D-2)}$, and the result follows.

 For $d=1$, we have to show that $y \in (D-2) \mB_1$ for all $y \leq D(D-2)$. We write 
$y = \lambda D-\mu$ with $1\leq \lambda \leq D-2$ and $0\leq \mu \leq D-1$ and separate two cases. If  $0\leq \mu \leq \lambda$, we write $\bfy = \mu (D-1) + (\lambda - \mu) D$ and we have that  $y \in s\mB_1$ for $s=\mu+(\lambda-\mu) = \lambda \leq (D-2)$, so $y \in s \mB_1 \subset  (D-2)\mB_1$.
If $\lambda < \mu \leq D-1$, we write $ y= (\lambda - 1) D + (D - \mu) 1$ and we have that $y \in s\mB_1$ for $s=D-\mu+\lambda -1 \leq D-2$, since $-\mu+\lambda \leq -1$, so $y \in (D-2)\mB_1$.

 Let $d\geq 2$ and take $\bfy = (y_1,\ldots,y_d) \in \N^d$, such that $\bfy \in \Delta_{d(D-2)}$ or, in other words,  $|\bfy| \leq d(D-2)D$, and let us show that $\bfy \in d(D-2)\mB_d$. 

\underline{Case 1:} $|\bfy| \leq (d-1)(D-2)D$. We take $j \in \{1,\ldots,d\}$ such that $y_j \leq (D-2)D$ and write $\bfy = \bfy' +  y_j \bfeps_j'$, where $\bfy' = (y_1,\ldots,y_{j-1},0,y_{j+1},\ldots,y_d)$. By  the case $d=1$, 
we have that $y_j \in (D-2) \mB_1$, so $y_j \bfeps_j' \in (D-2)\mB_d$. Also, if one considers the $j$-th projection map $\pi_j: \N^d \longrightarrow \N^{d-1}$ defined as $\pi_j(z_1,\ldots,z_j,\ldots,z_d) = (z_1,\ldots,z_{j-1},z_{j+1},\ldots,z_d)$ and $\mB_d' := \{\bfa = (a_1,\ldots,a_d)\in \mB_d : a_j = 0\}$ one has that $\pi_j(\mB_d') = \mB_{d-1}$ and $|\pi_j(\bfy')| \leq (d-1)(D-2)D$. By the inductive hypothesis, it follows that $\pi_j(\bfy') \in (d-1)(D-2) \mB_{d-1}$ and, thus, $\bfy' \in (d-1)(D-2) \mB_{d}$. We conclude that $\bfy = \bfy'+ y_j \bfeps_j' \in d(D-2)\mB_d$.

\underline{Case 2:} $|\bfy| > (d-1)(D-2)D$. We begin by proving the following claim.

{\it Claim:} If $|\bfz| = (d-1)(D-2)D$, then $\bfz \in (d-1)(D-2)\mB_d$ for all $d \geq 1$. 

{\it Proof of the claim:} If $d=1$, the result is clear. Assume $d\geq 2$ and consider $\pi_1(\bfz) = (z_2,\ldots,z_{d}) \in \N^{d-1}$. We have that $\mB_{d-1} =\pi_1 \left( \{\bfa \in \mB_d : |\bfa| = D\} \right) \subset \N^{d-1}$. Since $|\pi_1(\bfz)| \leq (d-1)(D-2)D$, then by the inductive hypothesis $\pi_1(\bfz) \in (d-1)(D-2) \mB_{d-1}$. Thus, one gets that 
\[\bfz = \left( (d-1)(D-2)D-|\pi_1(\bfz)|,z_2,\ldots,z_d \right) \in (d-1)(D-2)\mB_d,\]
which proves the claim.

We write $|\bfy| = (d-1)(D-2)D+\lambda D-\mu$ with $1\leq \lambda \leq D-2$ and $0\leq \mu \leq D-1$; we separate two subcases:

\underline{Case 2.1:} $0\leq \mu \leq \lambda$. Take $\bfb_1,\ldots,\bfb_\mu \in \mA$ with $|\bfb_i| = D-1$ for all $i$, and $\bfc_1,\ldots,\bfc_{\lambda-\mu} \in \mA$ with $|\bfc_j| = D$ for all $j$, such that $\bfz = \bfy-\sum_{i=1}^\mu \bfb_i - \sum_{j=1}^{\lambda-\mu} \bfc_j \in \N^d$. Then, $|\bfz| = (d-1)(D-2)D$, and hence $\bfz \in (d-1)(D-2)\mA$ by the previous claim. Therefore, $\bfy \in s\mA$ for $s=(d-1)(D-2)+\mu+(\lambda-\mu) \leq d(D-2)$, so $\bfy \in d(D-2)\mA$.

\underline{Case 2.2:} $\lambda < \mu \leq D-1$. Take $\bfb_1,\ldots,\bfb_{D-\mu} \in \mA$ with $|\bfb_i| = 1$ for all $i$, and $\bfc_1,\ldots,\bfc_{\lambda-1} \in \mA$ with $|\bfc_j| = D$ for all $j$, such that $\bfz = \bfy-\sum_{i=1}^{D-\mu} \bfb_i - \sum_{j=1}^{\lambda-1} \bfc_j \in \N^d$. Then, $|\bfz| = (d-1)(D-2)D$, and hence $\bfz \in (d-1)(D-2)\mA$ by the previous claim. Therefore, $\bfy \in s\mA$ for $s=(d-1)(D-2)+D-\mu+\lambda -1 \leq d(D-2)$, since $-\mu+\lambda \leq -1$, so $\bfy \in d(D-2)\mA$.
\end{proof}

In the proof of Corollary \ref{cor:strucutre_smooth} we will use the following lemma. We postpone its proof until the next subsection, since it corresponds to the case $e = 1$ in Lemma \ref{lem:dependedimcone}.

\begin{lemma}\label{lem:dependedim}  
$\Delta_{s} + \{\mathbf{0}, D\bfeps_1',\ldots,D\bfeps_d'\} = \Delta_{s+1}$ if and only if $s \geq d - \frac{d}{D}$.
\end{lemma}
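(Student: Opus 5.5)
The strategy is to reduce the statement to an explicit lattice-point criterion and then test it at the most extreme point. The inclusion $\Delta_s + \{\mathbf{0}, D\bfeps_1',\ldots,D\bfeps_d'\} \subset \Delta_{s+1}$ always holds, since adding $\mathbf{0}$ or $D\bfeps_i'$ raises the coordinate sum by at most $D$. Only the reverse inclusion needs work. A point $\bfy\in\Delta_{s+1}$ lies in the left-hand side if and only if either $|\bfy|\le sD$, or there is an index $i$ with $y_i\ge D$ and $|\bfy|-D\le sD$. The second condition is automatic for $\bfy \in \Delta_{s+1}$, because $|\bfy|\le (s+1)D$. So equality fails exactly when there is $\bfy\in\N^d$ with $sD<|\bfy|\le (s+1)D$ and $y_i\le D-1$ for every $i$.

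Next I would determine when such a point exists. Under the constraint $y_i\le D-1$ for all $i$, the largest possible coordinate sum is $d(D-1)$, attained at $\bfy=(D-1,\ldots,D-1)$. Coordinate sums can be lowered one unit at a time, so every value between $0$ and $d(D-1)$ is achieved. Hence a bad point exists if and only if the interval $(sD,(s+1)D]$ meets $[0,d(D-1)]$. Since $(s+1)D>0$, this is equivalent to $sD<d(D-1)$, that is, $s<d-\frac{d}{D}$.

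Putting this together, equality holds if and only if $sD\ge d(D-1)$, i.e., $s\ge d-\frac dD$. In the direction where equality holds, every $\bfy$ with $|\bfy|>sD\ge d(D-1)$ must have some coordinate at least $D$, and subtracting $D\bfeps_i'$ from it lands in $\Delta_s$. In the direction where equality fails, a suitable point is obtained by decreasing coordinates of $(D-1,\ldots,D-1)$ until the coordinate sum is $\min\{d(D-1),(s+1)D\}$; this value exceeds $sD$.

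The main care point is the edge case $s=0$, where $\Delta_0=\{\mathbf{0}\}$. The argument above still covers it, since $(0,D]\cap[0,d(D-1)]$ is nonempty because $D\ge 2$. Beyond that, the proof is elementary, and no earlier result of the paper is needed.
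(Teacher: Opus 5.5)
Your proof is correct and is essentially the paper's argument, which proves this lemma as the case $e=1$ of Lemma~\ref{lem:dependedimcone}. In both, the inclusion $\subset$ is trivial, equality fails exactly when some point has all coordinates at most $D-1$ and coordinate sum in $(sD,(s+1)D]$, such a point exists precisely when $sD<d(D-1)$, and otherwise every $\bfy$ with $|\bfy|>sD\geq d(D-1)$ has a coordinate $\geq D$ that can be subtracted. The only difference is cosmetic: your witness has sum $\min\{d(D-1),(s+1)D\}$, while the paper's has sum exactly $sD+1$.
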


\begin{corollary} \label{cor:strucutre_smooth}
Let $\mA \subset \N^d$ be a finite set as in Proposition~\ref{prop:smooth_lleno}, then $s\mA = \Delta_s$ 
for all $s\geq d(D-2)$. 
Equivalently, $s \uA = \{\bfy \in \N^{d+1} : |\bfy| = sD\}$ for all $s \geq d(D-2)$.
\end{corollary}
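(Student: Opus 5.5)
The plan is an induction on $s \geq d(D-2)$. Proposition~\ref{prop:smooth_lleno} gives the base case $s = d(D-2)$, and Lemma~\ref{lem:dependedim} drives the inductive step. Since $s\mA \subset \Delta_s$ always holds, only the reverse inclusion $\Delta_s \subset s\mA$ needs proof.

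\emph{Inductive step.} Suppose $s\mA = \Delta_s$ for some $s \geq d(D-2)$. Because $\{\mathbf{0}, D\bfeps_1', \ldots, D\bfeps_d'\} \subset \mA$, we get
\[
\Delta_s + \{\mathbf{0}, D\bfeps_1', \ldots, D\bfeps_d'\} = s\mA + \{\mathbf{0}, D\bfeps_1', \ldots, D\bfeps_d'\} \subset (s+1)\mA \subset \Delta_{s+1}.
\]
By Lemma~\ref{lem:dependedim}, the leftmost set equals $\Delta_{s+1}$ provided $s \geq d - d/D$. Hence $(s+1)\mA = \Delta_{s+1}$, which closes the induction.

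The one point to check is that $d(D-2) \geq d - d/D$ whenever $D \geq 3$. This holds because $d(D-2) \geq d > d - d/D$. So every $s \geq d(D-2)$ lies in the range where Lemma~\ref{lem:dependedim} applies. The case $D = 2$ is excluded by the hypotheses of Proposition~\ref{prop:smooth_lleno}; it was in any case already noted to be trivial, since there $\mA = \Delta_1$.

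\emph{Equivalent formulation.} The statement for $\uA$ follows from \eqref{eq:relationsumsets}. A vector $\bfy = (y_0, \bfy') \in \N^{d+1}$ with $|\bfy| = sD$ lies in $s\uA$ if and only if $\bfy' \in s\mA$, because $y_0 = sD - |\bfy'|$ is forced. Moreover, $\bfy' \in \N^d$ with $|\bfy'| \leq sD$ is exactly the condition $\bfy' \in \Delta_s$. No step here is a real obstacle: all the substance lies in the base case (Proposition~\ref{prop:smooth_lleno}) and in Lemma~\ref{lem:dependedim}, which we are assuming.
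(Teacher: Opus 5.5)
Your proposal is correct and follows essentially the same route as the paper: induction on $s$ starting from Proposition~\ref{prop:smooth_lleno}, with the inductive step from Lemma~\ref{lem:dependedim} via $\Delta_{s+1} = \Delta_s + \{\mathbf{0}, D\bfeps_1',\ldots,D\bfeps_d'\} \subset s\mA + \mA = (s+1)\mA$, using $d(D-2) \geq d - \frac{d}{D}$. The reformulation for $\uA$ then follows from \eqref{eq:relationsumsets}, exactly as in the paper.
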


\begin{proof}
If $D = 2$ then $0 \mA = \Delta_0 = \{\mathbf{0}\}$, $\mA = \Delta_1$ and, hence, $s \mA = \Delta_s$ for all $s \in \mathbb N$.
For $D > 2$ let us  prove  that $s\mA = \Delta_s$ by induction on $s$. 
The base case is Proposition~\ref{prop:smooth_lleno}. By Lemma \ref{lem:dependedim}, for $s \geq d(D-2) \geq d - \frac{d}{D},$ we have that \[ (s+1) \mA \subset \Delta_{s+1} = \Delta_s + \{\mathbf{0}, D\bfeps_1',\ldots,D\bfeps_d'\} \subset \Delta_s + \mA = s \mA + \mA = (s+1) \mA,\]
and we conclude that $(s+1)\mA = \Delta_{s+1}$. 
The last assertion follows from the relation between the sumsets of $\mA$ and $\uA$ described in (\ref{eq:relationsumsets}).
\end{proof}

\begin{definition} \label{def:sumsets_reg_smooth}
Let $\mA = \{\bfa_0,\ldots,\bfa_n\} \subset \N^d$, $n\geq d \geq 1$ be a set such that
\[\{\mathbf{0},\bfeps_i',(D-1)\bfeps_i', \bfeps_i'+(D-1)\bfeps_j' \mid 1 \leq i,j\leq d\} \subset \mA.\]
The {\it sumsets regularity of $\mA$} is 
\[\sigma(\mA) = \min\{s\in \N \mid s\mA = \Delta_s \text{, and } \Delta_s + \{\mathbf{0}, D\bfeps_1',\ldots,D\bfeps_d'\} = \Delta_{s+1}\}.\]
When there is no confusion, we will write $\sigma = \sigma(\mA)$.
\end{definition}

From the definition one derives that $t \mA = \Delta_t$ for all $t \geq \sigma(\mA)$. When $D = 2$, we have that $s\mA = \Delta_s$ for all $s\in \N$ and, by Lemma \ref{lem:dependedim}, it follows that $\sigma(\mA) = d - \lfloor \frac{d}{2} \rfloor$. Whenever $D \geq 3$,
Lemma \ref{lem:dependedim} and Corollary \ref{cor:strucutre_smooth} provide the following lower and upper bounds for $\sigma(\mA)$.

\begin{theorem} \label{thm:sigma_smooth}
Let $\mA = \{\bfa_0,\ldots,\bfa_n\} \subset \N^d$, $n\geq d \geq 1$, $D \geq 3$ be a set such that
\[\{\mathbf{0},\bfeps_i',(D-1)\bfeps_i', \bfeps_i'+(D-1)\bfeps_j' \mid 1 \leq i,j\leq d\} \subset \mA.\]
Then, the sumsets regularity of $\mA$ satisfies that
\[ d - \frac{d}{D} \leq \sigma(\mA) \leq d(D-2).\]
\end{theorem}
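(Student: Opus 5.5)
The plan is to read both inequalities directly off Definition~\ref{def:sumsets_reg_smooth}, using Lemma~\ref{lem:dependedim} and Corollary~\ref{cor:strucutre_smooth}.

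\emph{Lower bound.} By definition, $\sigma=\sigma(\mA)$ is an integer satisfying $\Delta_\sigma + \{\mathbf{0}, D\bfeps_1',\ldots,D\bfeps_d'\} = \Delta_{\sigma+1}$. The ``only if'' direction of Lemma~\ref{lem:dependedim} then forces $\sigma \geq d - \frac{d}{D}$.

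\emph{Upper bound.} Set $s_0 = d(D-2)$; we check that $s_0$ lies in the set whose minimum defines $\sigma(\mA)$.
\begin{enumerate}[(i)]
\item Since $D\geq 3$, Corollary~\ref{cor:strucutre_smooth} (or directly Proposition~\ref{prop:smooth_lleno}) gives $s_0\mA = \Delta_{s_0}$.
\item To apply the ``if'' direction of Lemma~\ref{lem:dependedim}, we need $s_0 \geq d - \frac{d}{D}$. This holds because $d(D-2) \geq d > d - \frac{d}{D}$ whenever $D \geq 3$. Hence $\Delta_{s_0} + \{\mathbf{0}, D\bfeps_1',\ldots,D\bfeps_d'\} = \Delta_{s_0+1}$.
\end{enumerate}
Both conditions in the definition hold at $s_0$, so $\sigma(\mA) \leq d(D-2)$.

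There is no real obstacle, since all the combinatorial work is in Proposition~\ref{prop:smooth_lleno} and Lemma~\ref{lem:dependedim}. The only points to verify are the elementary inequality $d(D-2)\geq d-\frac dD$ and that $\sigma(\mA)$ is well defined. The latter follows from the upper-bound argument itself, which exhibits an element of the defining set.
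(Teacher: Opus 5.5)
Your proof is correct and is the same argument the paper gives in one line. The lower bound is the ``only if'' direction of Lemma~\ref{lem:dependedim} applied at $s=\sigma(\mA)$. The upper bound comes from checking that $d(D-2)$ meets both defining conditions, using Proposition~\ref{prop:smooth_lleno} (or Corollary~\ref{cor:strucutre_smooth}) together with the ``if'' direction of Lemma~\ref{lem:dependedim}, since $d(D-2)\geq d>d-\frac{d}{D}$.
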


The bounds on $\sigma(\mA)$ obtained in Theorem~\ref{thm:sigma_smooth} are sharp, as the following examples show.

\begin{example} \label{ex:sigma_smooth}
Let $d \geq 1$, $D \geq 3$ and consider $\mA = \{\mathbf{0},\bfeps_i',(D-1)\bfeps_i', \bfeps_i'+(D-1)\bfeps_j' \mid 1 \leq i,j\leq d\} \subset \N^d$, where $\{\bfeps_1',\ldots,\bfeps_d'\}$ is the canonical basis of $\N^d$; then, $\sigma(\mA) = d(D-2)$.

Indeed, by Theorem~\ref{thm:sigma_smooth}, $\sigma(\mA) \leq d(D-2)$. If $(D,d) = (3,1)$, then $1 - \frac{1}{3} \leq \sigma(\mA) \leq 1$, and we get $\sigma(\mA) = 1$. Assume now that $(D,d) \neq (3,1)$. We observe that  the only way of writing  $\bfy = (D-2,D-2,\ldots,D-2) \in \N^d$ as a sum of nonzero elements in $\mA$ is $\bfy = \sum_{i=1}^d (D-2)\bfeps_i'$. Then, $\bfy \notin (d(D-2)-1) \mA$  but $\bfy \in \Delta_{d(D-2)-1}$ (because $(D,d) \neq (3,1)$); thus $\sigma(\mA) \geq d(D-2)$.

\end{example}

\begin{example} \label{ej:sumsetreg_Veronese}
Take $\mA = \{\bx \in \N^d : |\bx| \leq D\} = \Delta_1$. Then, the sumsets regularity of $\mA$ is $\sigma(\mA) = d-\lfloor \frac{d}{D} \rfloor$. Let us prove it.

One has that $s\mA = \Delta_s$ for all $s\in \N$. Also, by Lemma \ref{lem:dependedim}, $\Delta_{s} + \{\mathbf{0}, D\bfeps_1',\ldots,D\bfeps_d'\} = \Delta_{s+1}$ if and only if $s \geq d - \frac{d}{D}$. Therefore, $\sigma(\mA) =  d-\lfloor \frac{d}{D} \rfloor$.

\end{example}

The following result shows that the behavior of Corollary \ref{cor:strucutre_smooth} (i.e., $s\mA = \Delta_s$ for all $s\gg 0$) characterizes the smoothness of a simplicial projective toric variety.

\begin{theorem}\label{thm:strThmCharactSmooth}
Let $n\geq d \geq 1, D \geq 2$ and the set $\mA = \{\bfa_0,\ldots,\bfa_n\} \subset \N^d$ such that $\{\mathbf{0},\allowbreak D\bfeps_1,\ldots,\allowbreak D\bfeps_d\} \subset \mA$ and $|\bfa_i| \leq D$ for all $i \in \{0,\ldots,n\}$. 
The following conditions are equivalent:
\begin{enumerate}[(a)]
\item\label{thm:strThmCharactSmooth_1} 
$\mX_\uA \subset \Pn{n}_\k$ is smooth.
\item\label{thm:strThmCharactSmooth_2} 
$\{\bfeps_i+(D-1)\bfeps_j \mid 0 \leq i,j\leq d\} \subset \uA$, where $\{\bfeps_0,\ldots,\bfeps_d\}$ is the canonical basis of $\N^{d+1}$. 
\item\label{thm:strThmCharactSmooth_c} 
$\{\mathbf{0},\bfeps_i',(D-1)\bfeps_i', \bfeps_i'+(D-1)\bfeps_j' \mid 1 \leq i,j\leq d\} \subset \mA$, where $\{\bfeps_1',\ldots,\bfeps_d'\}$ is the canonical basis of $\N^d$.
\item\label{thm:strThmCharactSmooth_3} 
There exists $s_0 \in \N$ such that $s\mA = \Delta_s$ for all $s \geq s_0$.
\end{enumerate}
\end{theorem}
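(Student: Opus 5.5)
The plan is to prove the cycle (a)$\Leftrightarrow$(b)$\Leftrightarrow$(c), then (c)$\Rightarrow$(d)$\Rightarrow$(b).

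\emph{(a)$\Leftrightarrow$(b).} This follows from Theorem~\ref{thm:simplicial_smooth}, once we check that the $D$ and the set $\mB$ appearing there can be taken to be our $D$ and $\uA$. Theorem~\ref{thm:simplicial_smooth} produces some set $\mB$ with $\mX_\mB=\mX_\uA$. To identify it with $\uA$, I would apply Lemma~\ref{lem:Cox} chart by chart, as in the proof of Proposition~\ref{prop:charact_1singularpt} with $e=1$. Smoothness of $\mY_i$ means that $\la\uA^{(i)}\ra$ is generated by $d$ elements. These must be the minimal elements $\lambda_{ij}\bfeps_j$ on the rays. The divisibility argument (a)--(e) of that proof then forces all $\lambda_{ij}$ to be equal, say to $\lambda$, and $\lambda$ divides every coordinate. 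Here one uses the harmless normalization that the gcd of all coordinates of $\uA$ is $1$; scaling does not change $I_\uA$. So every $\lambda_{ij}=1$, which is exactly (b). The converse direction is immediate from Lemma~\ref{lem:Cox}: under (b), every chart semigroup is $\N^d$.

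\emph{(b)$\Leftrightarrow$(c).} This is just the dictionary $\ubfa_i=(D-|\bfa_i|,\bfa_i)$. The element $\bfeps_0+(D-1)\bfeps_j$ corresponds to $(D-1)\bfeps_j'$. The element $(D-1)\bfeps_0+\bfeps_j$ corresponds to $\bfeps_j'$. The element $\bfeps_i+(D-1)\bfeps_j$ with $i,j\ge1$ corresponds to $\bfeps_i'+(D-1)\bfeps_j'$. Finally $D\bfeps_0$ corresponds to $\mathbf 0$.

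\emph{(c)$\Rightarrow$(d).} For $D\ge3$ this is Corollary~\ref{cor:strucutre_smooth} with $s_0=d(D-2)$. For $D=2$, condition (c) forces $\mA=\Delta_1$, and then $s\mA=\Delta_s$ for every $s$.

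\emph{(d)$\Rightarrow$(b).} This is the step I expect to need care. Fix $s\ge s_0$ and look at the points of $\Delta_s$ near a vertex.

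\emph{Near the vertex $sD\bfeps_j'$.} Take $\bfy=(sD-1)\bfeps_j'+\bfeps_i'$, which lies in $\Delta_s=s\mA$. Write $\bfy$ as a sum of $s$ elements of $\mA$. Since every element has $|\cdot|\le D$ and $|\bfy|=sD$, all summands have norm exactly $D$. Their $i$-coordinates sum to $1$, so exactly one summand has the form $\bfeps_i'+(D-1)\bfeps_j'$ (allowing $i=j$, which gives $D\bfeps_j'$). All other summands are supported on $\bfeps_j'$, hence equal $D\bfeps_j'$. This gives $\bfeps_i'+(D-1)\bfeps_j'\in\mA$ for $i\ne j$.

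\emph{$(D-1)\bfeps_j'$.} Take $\bfy=(sD-1)\bfeps_j'$, of norm $sD-1$. Its summands have norms summing to $sD-1$, so exactly one summand has norm $D-1$ and the rest have norm $D$. All summands lie on the $j$-axis, so the rest are $D\bfeps_j'$ and the deficient one is $(D-1)\bfeps_j'$.

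\emph{$\bfeps_j'$.} Take $\bfy=\bfeps_j'\in\Delta_s$. Its summands are $\mathbf 0$ plus one nonzero element, which must be $\bfeps_j'$.

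Together with $\mathbf 0\in\mA$, this gives (c), and hence (b). The only subtlety is the bookkeeping of norms in these decompositions, which works because $|\bfa|\le D$ for every $\bfa\in\mA$.
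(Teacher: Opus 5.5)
Your proposal follows the paper's proof. It proves (a)$\Leftrightarrow$(b) with the chart-wise Lemma~\ref{lem:Cox} and divisibility argument from the proof of Proposition~\ref{prop:charact_1singularpt}, and (b)$\Leftrightarrow$(c) with the dictionary $\bfa\mapsto(D-|\bfa|,\bfa)$. It gets (c)$\Rightarrow$(d) from Corollary~\ref{cor:strucutre_smooth}, and (d)$\Rightarrow$(c) by decomposing lattice points of $\Delta_s$ near the vertices $sD\bfeps_j'$. Your norm-counting makes that last step rigorous. Your use of $(sD-1)\bfeps_j'$ also yields $(D-1)\bfeps_j'\in\mA$, which the paper's proof does not derive explicitly.

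One correction: the gcd normalization is not harmless. Dividing $\uA$ by a common factor changes $D$ but leaves $I_\uA$, and hence (a), unchanged. By contrast, (b), (c) and (d) each force $\bfeps_j'\in\mA$, so they are not scale invariant. For instance, $\mA=\{0,2\}\subset\N$ with $D=2$ satisfies all the hypotheses of the theorem. Then $\uA=\{(2,0),(0,2)\}$, so $I_\uA=0$ and $\mX_\uA=\P^1$ is smooth, yet $(1,1)\notin\uA$, so (b) fails. Your divisibility argument therefore proves (a)$\Rightarrow$(b) only under the extra hypothesis that the coordinates of $\mA$ (equivalently, of $\uA$) have gcd $1$. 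You should state this as a hypothesis of the theorem rather than invoke it as a without-loss-of-generality step. The paper's proof relies on the same tacit assumption, which is made inside the proof of Proposition~\ref{prop:charact_1singularpt}.
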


\begin{proof}
The equivalence \ref{thm:strThmCharactSmooth_1}~$\Leftrightarrow$~\ref{thm:strThmCharactSmooth_2} is Proposition~\ref{prop:charact_1singularpt}, \ref{thm:strThmCharactSmooth_2}~$\Leftrightarrow$~\ref{thm:strThmCharactSmooth_c} is direct from the construction of $\uA$, and the implication \ref{thm:strThmCharactSmooth_c}~$\Rightarrow$~\ref{thm:strThmCharactSmooth_3} is Corollary~\ref{cor:strucutre_smooth}. 
Let us prove \ref{thm:strThmCharactSmooth_3}~$\Rightarrow$~\ref{thm:strThmCharactSmooth_c}. Take $s\in \N$ such that $s\mA = \Delta_s$, and fix $i,j$ with $1\leq i,j \leq d$. Since $\bfeps_i' \in s\mA = \Delta_s$, then $\bfeps_i' \in \mA$. Moreover, since $(Ds-1)\bfeps_i'+\bfeps_j' \in s\mA = \Delta_s$, then $(D-1)\bfeps_i' + \bfeps_j' \in \mA$.
Therefore, $\mA$ is as in~\ref{thm:strThmCharactSmooth_c}. 
\end{proof}

\subsection{Varieties with one singular point}\label{subsec:structure_1psing}

Let $\mA = \{\bfa_0,\ldots,\bfa_n\} \subset \N^d$ be a finite set, and suppose that its homogenization $\uA = \{\ubfa_0,\ldots,\ubfa_n\} \subset \N^{d+1}$ defines a simplicial projective toric variety with a single singular point. Proposition~\ref{prop:charact_1singularpt} characterizes such sets $\uA$. Hence, throughout this subsection, we will assume that
\begin{equation}
\{\mathbf{0}\} \cup \{(D-e) \bfeps_i' \mid 1\leq i \leq d \} \cup \{ (D-1)\bfeps_i'+\bfeps_j' \mid 1\leq i,j \leq d \}  \subset \mA,
\label{eq:A_sup1singp}
\end{equation}
where $D \geq 2$, $|\bfa_i| \leq D$,
$1\leq e \leq D$ is a divisor of $D$ that divides $|\bfa_i|$ for all $i\in \{0,\ldots,n\}$; and if $e=1$, then there exists $j\in \{1,\ldots,d\}$ such that $\bfeps_j' \notin \mA$. In particular, this implies that whenever $D = 2$, then $e = 2$.

If we denote $\N_e^d \coloneq  \{\bfy \in \N^d : e \text{ divides } |\bfy|\}$, then  we have that $\langle \mA \rangle \subset \N_e^d$.
We will prove that $\mH :=  \N_e^d \setminus \langle \mA \rangle$  is a finite set.  Also, if one denotes $\Delta_{s,e}= \{\bfy \in \N_e^d : |\bfy| \leq sD\}$, one clearly has that $s \mA \subset \Delta_{s,e} \setminus \mH$, and we will show that these two sets eventually coincide.

If $D=2$, then $e=2$ and $\mA = \{\mathbf{0}\} \cup \{\bfeps_i'+\bfeps_j' \mid 1\leq i ,j \leq d\}$. Hence, it is clear that $s\mA = \Delta_{s,2}$ for all $s\in \N$, and we have that $\mH = \emptyset$ in this case.

\begin{proposition} \label{prop:structure_sumsets}
Let $D \geq 3$, $t_0 := (D-2)(d-1) + \frac{D}{e} - 2$ and $s_0 :=  \frac{D}{e} t_0 $, then $\mH \subset \Delta_{t_0,e}$  and $s\mA = \Delta_{s,e} \setminus \mH$ for all $s \geq s_0$. 
\end{proposition}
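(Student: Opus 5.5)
The plan is to prove the two inclusions separately. The inclusion $s\mA \subset \Delta_{s,e}\setminus \mH$ is immediate: every element of $\mA$ has $|\bfa_i|\le D$ and $e \mid |\bfa_i|$, and $s\mA\subset \la \mA\ra$. The content of the statement is therefore the reverse inclusion together with finiteness of $\mH$. I will first show that every $\bfy\in\N_e^d$ with $|\bfy|>t_0D$ lies in $q\mA$, where $q=\lceil |\bfy|/D\rceil$. This gives $\mH\subset\Delta_{t_0,e}$. Then I will handle the bounded part $|\bfy|\le t_0D$ by a crude count of summands.

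\emph{Step 1 (degree-$D$ layers).} Let $\mE:=\{(D-1)\bfeps_i'+\bfeps_j' \mid 1\le i,j\le d\}\subset\mA$. These are all elements of degree $D$, and $\mE$ contains $D\bfeps_i'$. I will reuse the Claim inside the proof of Proposition~\ref{prop:smooth_lleno}. Its inductive proof only lifts $\pi_1$-images to elements of degree exactly $D$. Inspecting that proof, I expect it to show that every $\bfz\in\N^d$ with $|\bfz|=(d-1)(D-2)D$ is a sum of exactly $(d-1)(D-2)$ elements of $\mE$. Adding copies of $D\bfeps_i'$ then extends this to every $\bfz$ with $|\bfz|=kD$ and $k\ge (d-1)(D-2)$: such $\bfz$ is a sum of $k$ elements of $\mE$. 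This step needs care, since I must check that the lower-dimensional pieces used in that induction really come from degree-$D$ elements. This is where I expect the main technical obstacle.

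\emph{Step 2 (large degrees).} Let $\bfy\in\N_e^d$ with $|\bfy|>t_0D$, and write $|\bfy|=qD-re$ with $0\le r<D/e$. Then $q\ge t_0+1=(d-1)(D-2)+D/e-1$, so $q-r\ge (d-1)(D-2)$. I will subtract $r$ elements of the form $(D-e)\bfeps_i'$ from $\bfy$, each time choosing $i$ with $y_i$ maximal. Since $|\bfy|$ is large compared to $d(D-e)$, the current maximal coordinate always stays at least $D-e$, so the result $\bfz$ remains in $\N^d$. Verifying this inequality from $|\bfy|>t_0D$ is routine but must be checked, including small $d$. Now $|\bfz|=(q-r)D$, so by Step 1 $\bfz$ is a sum of $q-r$ elements of $\mE$. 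Hence $\bfy\in q\mA$, and in particular $\bfy\in\la\mA\ra$. This proves $\mH\subset\Delta_{t_0,e}$. Moreover, for $s\ge s_0\ge t_0$ and $\bfy\in\Delta_{s,e}$ with $|\bfy|>t_0D$, we have $q\le s$. Padding with copies of $\mathbf 0\in\mA$ gives $\bfy\in s\mA$.

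\emph{Step 3 (small degrees).} Let $\bfy\in\Delta_{s,e}\setminus\mH$ with $|\bfy|\le t_0D$. Then $\bfy\in\la\mA\ra$, so $\bfy$ is a sum of nonzero elements of $\mA$. Each of these has degree a positive multiple of $e$, so the number of summands is at most $|\bfy|/e\le t_0D/e=s_0\le s$. Padding with $\mathbf 0$ gives $\bfy\in s\mA$. Combining Steps 2 and 3 yields $s\mA=\Delta_{s,e}\setminus\mH$ for all $s\ge s_0$.
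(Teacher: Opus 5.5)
Your proposal is correct and follows essentially the same route as the paper. Your Step~2 is the paper's part (i): your $r$ is its $\lambda$, and your greedy subtraction of copies of $(D-e)\bfeps_i'$ is its Claim~2. Your Step~1 is what the paper obtains by applying Corollary~\ref{cor:strucutre_smooth} to $\mA'=\{(D-1)\bfeps_i'+\bfeps_j'\}$, which is the homogenized smooth set $\mB_{d-1}$ in dimension $d-1$. Your Step~3 is exactly the paper's part (ii).

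The point you flagged in Step~1 does go through. The lifts of elements of $\mB_{d-1}$ in the Claim of Proposition~\ref{prop:smooth_lleno} are exactly elements of your $\mE$. The extension to $k>(d-1)(D-2)$ only needs the observation that $kD>d(D-1)$ forces some coordinate to be at least $D$, so a copy of $D\bfeps_i'$ can be split off.
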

\begin{proof} We divide the proof in two parts:
\begin{itemize}
\item[(i)] If $\bfy \in \N^d_e \setminus \Delta_{t_0,e}$, then $\bfy \in s\mA$ for all $s \geq \frac{|\bfy|}{D}$.
\item[(ii)] If $\bfy \in \Delta_{t_0,e} \setminus \mH$, then $\bfy \in s_0 \mA.$
\end{itemize}
In particular, (i) implies that $\mH \subset \Delta_{t_0,e}$; and (i) and (ii) together imply that $s\mA = \Delta_{s,e} \setminus \mH$ for all $s \geq s_0$. 

To prove (i) we take  $\bfy \in \N^d_e \setminus \Delta_{t_0,e}$, then $|\bfy| \geq D t_0 + e$. Let $\lambda \in \N$, $\lambda \leq D/e-1$ such that $|\bfy| \equiv -\lambda e \pmod{D}$. 

\smallskip
\noindent {\it Claim 1.} $|\bfy| - \lambda (D-e) \geq (d-1)(D-2)D$

\noindent {\it Proof of claim 1.} If $\lambda = 0$, then $|\bfy| \equiv 0 \pmod{D}$ and $|\bfy| \geq Dt_0 + e$. Therefore $|\bfy| \geq Dt_0 + D \geq (d-1) (D-2) D$. If $\lambda\geq 1$, then $|\bfy|-\lambda(D-e) \geq  D t_0 + e - (\frac{D}{e} - 1) (D-e) = (d-1)(D-2)D$. 

\smallskip

\noindent {\it Claim 2.} There exist $\lambda_1,\lambda_2,\ldots,\lambda_d \in \N$ such that $\sum_{i=1}^d \lambda_i =\lambda$, and $y_i \geq \lambda_i(D-e)$ for all $i=1,\ldots,d$.

\noindent {\it Proof of claim 2.} If $\lambda = 0$ there is nothing to prove. Assume that $\lambda \geq 1$. Whenever $z= (z_1,\ldots,z_d) \in \N^d$ satisfies that $|\bfz| > d(D-e-1)$, then $z_i \geq D-e$ for some $i \in \{1,\ldots,d\}$. For all $\mu < \lambda$ we have that $|\bfy| - \mu (D-e) \geq |\bfy| - (\lambda - 1) (D-e)$ and, by {\it Claim 1}, it follows that $|\bfy| - \mu (D-e) \geq (d-1)(D-2)D + D-e > d(D-e-1)$, and {\it Claim 2} follows.

\smallskip

Consider $\bfy' := \bfy - \sum_{i=1}^d \lambda_i (D-e) \bfeps_i'$; then 
\begin{itemize} 
\item $\bfy \in \N_e^d$ by {\it Claim 2},
\item  $|\bfy'| \equiv 0 \pmod{D}$ because $|\bfy'| = |\bfy| -\lambda(D-e)$, and
\item $|\bfy'|  \geq (d-1)(D-2)D$ by {\it Claim 1}.
\end{itemize}

Take now  $\mA' = \{(D-1)\bfeps_i' + \bfeps_j' \, \vert \, 1\leq i,j \leq d\} \subset \mA$. By  Corollary~\ref{cor:strucutre_smooth}, we get that $\bfy \in \la \mA' \ra$.
Therefore, $\bfy = \sum_{1\leq i, j \leq d} \mu_{i,j} ((D-1)\bfeps_i' + \bfeps_j') + \sum_{i=1}^d \lambda_i (D-e) \bfeps_i'$ for some $\mu_{i,j}, \lambda_i \in \N$. Then, $\bfy \in \left( \sum_{i,j} \mu_{i,j}  + \lambda \right) \mA$.
Take $s\in \N$ such that $s \geq \frac{|\bfy|}{D}$ and let us prove that $\sum_{i,j} \mu_{i,j} +\lambda \leq s$. Since $sD \geq |\bfy| = \left( \sum_{i,j} \mu_{i,j} \right) D + \lambda(D-e) = \left( \sum_{i,j} \mu_{i,j}+\lambda \right) D -\lambda e$, then \[\sum_{i,j} \mu_{i,j} + \lambda \leq s+\lambda \frac{e}{D} \leq s + \left( \frac{D}{e}-1 \right) \frac{e}{D} < s+1 \, .\] Thus, $\sum_{i,j} \mu_{i,j} + \lambda \leq s$, and  (i) is proved.

Let us prove (ii). Take $\bfy \in \Delta_{t_0,e} \setminus \mH$; then $|\bfy| \leq t_0 D$ and we can write $\bfy = \sum_{i=0}^n \mu_i \bfa_i$ for some $\mu_i \in \N,\, \bfa_i \in \mA \setminus \{\bf0\}$. 
Since $|\bfy| = \sum_i \mu_i |\bfa_i| \geq \left( \sum_i \mu_i \right) e$, then 
\[\sum_i \mu_i \leq \frac{|\bfy|}{e}  \leq \frac{D}{e} t_0 = s_0\, ,\] 
so $\bfy \in (\sum_i \mu_i) \mA \subset s_0 \mA$; and the result follows.
\end{proof}

\begin{example}\label{ex:1sing} Consider the set \[ \mA = \{(0,0), (4,0), (0,4), (3,1), (1,3), (2,0), (0,2)\} \subset \N^2; \]
this set satisfies the conditions in \eqref{eq:A_sup1singp} with $d = 2$, $D = 4$ and $e = 2$. Proposition \ref{prop:structure_sumsets} ensures that $\mH \subset \Delta_{2,2}$ and that $s \mA = \Delta_{s,2} \setminus \mH$ for all $s \geq 4$.

Nevertheless, we observe that $0 \mA = \{(0,0)\}$, $\mA = \Delta_{1,2} \setminus \{(1,1), (2,2)\}$, and
$s \mA = \Delta_{s,2} \setminus \{(1,1)\}$ for all $s \geq 2$ (see Figure \ref{fig:ex1sing}). Therefore, $\mH = \{(1,1)\} \subset \Delta_{1,2}$ and $s \mA = \Delta_{s,2} \setminus \mH$ for all $s \geq 2$.

\begin{figure}
\includegraphics[scale = 1]{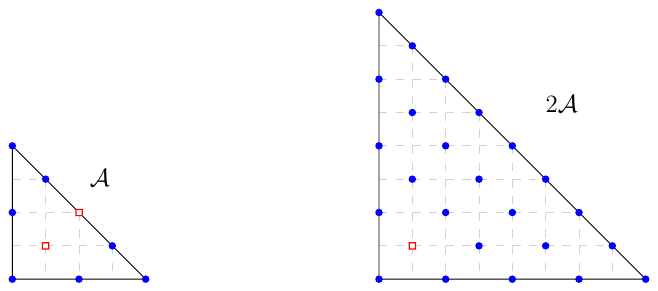}
\caption{For $\mA$ as in Example \ref{ex:1sing}, filled circles represent elements in $\mA$ and $2 \mA$, respectively; while empty squares correspond to elements in $\Delta_{1,2} \setminus \mA$ and $\Delta_{2,2} \setminus 2\mA$, respectively.} \label{fig:ex1sing}
    \end{figure}

\end{example}

\begin{definition} \label{def:sumsets_reg_1psing}
Let $\mA \subset \N_e^d$ be a set as in \eqref{eq:A_sup1singp}.
The {\it sumsets regularity of $\mA$}, $\sigma(\mA)$, is 
\[\sigma(\mA) = \min\{ s\in \N \mid \mH \subset \Delta_{s,e}\, , s'\mA = \Delta_{s',e}\setminus \mH \, \, \forall s' \geq s \, , \Delta_{s,e} + \{\mathbf{0},D\bfeps_1',\ldots,D\bfeps_d'\} = \Delta_{s+1,e} \}.\]
When there is no confusion, we will write $\sigma = \sigma(\mA)$.
\end{definition}

\begin{remark}\label{rem:defsigma}
\begin{enumerate}[(1)]
\item\label{rem:defsigma_1} 
If one allows $e=1$ and $\{\bfeps_i' \mid 1\leq i \leq d\} \subset \mA$ in the previous definition, then we are under the hypotheses of Subsection~\ref{subsec:structure_smooth}. Note that in this case $\mH = \emptyset$ and Definitions~\ref{def:sumsets_reg_smooth} and \ref{def:sumsets_reg_1psing} coincide for such a set $\mA$.
\item\label{rem:defsigma_2} 
For all $s\geq \sigma(\mA)$, one has that $(s+1)\mA \setminus s\mA = \Delta_{s+1,e} \setminus \Delta_{s,e}$. 
\end{enumerate}
\end{remark}

The following lemma characterizes when we have the equality $\Delta_{s,e} + \{\mathbf{0},D\bfeps_1',\ldots,D\bfeps_d'\} = \Delta_{s+1,e}$.

\begin{lemma} \label{lem:dependedimcone} 
$\Delta_{s,e} +\{\mathbf{0},D\bfeps_1',\ldots,D\bfeps_d'\} = \Delta_{s+1,e}$ \iff{} $s\geq d- \frac{d+e-1}{D}$. 
\end{lemma}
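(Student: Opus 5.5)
The plan is to reduce the set equality to a statement about a single extremal lattice point, and then finish with an elementary divisibility computation using $e \mid D$.

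\textbf{Reduction.} The inclusion $\Delta_{s,e}+\{\mathbf{0},D\bfeps_1',\ldots,D\bfeps_d'\}\subset\Delta_{s+1,e}$ always holds: adding $D\bfeps_i'$ raises $|\bfy|$ by $D$, and $e\mid D$ keeps the sum in $\N_e^d$. Points $\bfy\in\Delta_{s+1,e}$ with $|\bfy|\le sD$ already lie in $\Delta_{s,e}$. So the equality fails exactly when some $\bfy\in\N_e^d$ with $sD<|\bfy|\le (s+1)D$ cannot be written as $\bfz+D\bfeps_i'$ with $\bfz\in\N^d$. For such $\bfy$, the point $\bfy-D\bfeps_i'$ automatically lies in $\Delta_{s,e}$ whenever it lies in $\N^d$, i.e.\ whenever $y_i\ge D$. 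Hence the equality fails \iff{} there is $\bfy\in\N^d$ with
\[ y_i\le D-1 \text{ for all } i, \qquad e\mid |\bfy|, \qquad sD<|\bfy|\le (s+1)D. \]

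\textbf{Locating the extremal point.} Every integer between $0$ and $d(D-1)$ is $|\bfy|$ for some $\bfy$ with all $y_i\le D-1$. Because $e\mid D$, the largest multiple of $e$ not exceeding $d(D-1)=dD-d$ is
\[ M:=dD-e\lceil d/e\rceil . \]
Also $sD$ is a multiple of $e$, so the smallest multiple of $e$ exceeding $sD$ is $sD+e$, and $sD+e\le (s+1)D$. Therefore a bad $\bfy$ exists \iff{} $sD+e\le M$. Equivalently, the equality holds \iff{} $sD>M-e$, i.e.\ $sD\ge M$, since both $sD$ and $M$ are multiples of $e$.

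\textbf{Matching the stated threshold.} The condition $sD\ge M$ says that $dD-sD$, a multiple of $e$, is at most $e\lceil d/e\rceil$. Since $e\lceil d/e\rceil$ is the largest multiple of $e$ that is $\le d+e-1$, this is equivalent to $dD-sD\le d+e-1$, i.e.\ $s\ge d-\frac{d+e-1}{D}$. This last conversion is the step I expect to need the most care: the rounding between $\lceil d/e\rceil$ and $d+e-1$, and the use of $e\mid D$ to know that $dD-sD$ is a multiple of $e$. The case $s<0$ is not relevant, and the $e=1$ specialization recovers Lemma~\ref{lem:dependedim}.
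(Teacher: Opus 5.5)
Your proof is correct and takes essentially the paper's route: the equality fails exactly when some point of $\N_e^d$ with all coordinates at most $D-1$ has $|\bfy|$ in $(sD,(s+1)D]$, and such a point exists exactly when $sD+e\le d(D-1)$. The detour through $M=dD-e\lceil d/e\rceil$ is harmless but unnecessary: $sD+e$ is already a multiple of $e$, and $sD+e\le d(D-1)$ is equivalent to $s<d-\frac{d+e-1}{D}$ directly, by integrality of $sD$.
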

\begin{proof}
The inclusion $\Delta_{s,e} + \{\mathbf{0}, D\bfeps_1',\ldots,D\bfeps_d'\} \subset \Delta_{s+1,e}$ holds for every $s \in \N$, so let us prove that the reverse inclusion holds if and only if $s \geq d - \frac{d+e-1}{D}$.

If $s < d - \frac{d+e-1}{D}$, then $s D + 1 \leq d (D-1)-e+1$, so $sD+e\leq d(D-1)$ and, if we write $s D + e = q (D-1) + r$ with $q, r \in \N$ and $0 \leq r < D-1$, then either $q < d$ or $q = d$ and $r = 0$. Hence, we consider $\bfy = (D-1) (\bfeps_1' + \cdots + \bfeps_q') + r \bfeps_{q+1}' \in \N^d$. We have that $|\bfy|= sD + e$ and all its nonzero entries are $< D$. Thus, $\bfy \in \Delta_{s+1,e}$ and $\bfy \notin \Delta_{s,e} + \{\mathbf{0}, D\bfeps_1',\ldots,D\bfeps_d'\}$.

Assume now that $s \geq d - \frac{d+e-1}{D}$ and let us prove that $\Delta_{s,e} + \{\mathbf{0}, D\bfeps_1',\ldots,D\bfeps_d'\} = \Delta_{s+1,e}$. Take $\bfy \in \Delta_{s+1,e}$, if  $\bfy \in \Delta_{s,e}$, then $\bfy = \bfy + \mathbf{0}$. If $\bfy
\notin \Delta_{s,e}$, then $s D +e \leq  |\bfy| \leq (s+1) D$. Since $|\bfy| \geq sD+e \geq d(D-1)$, there exists $j \in \{1,\ldots,d\}$ such that $y_j \geq D$. Then $\bfy = (\bfy - D \bfeps_j') + D \bfeps_j' \in \Delta_{s,e} + \{\mathbf{0}, D\bfeps_1',\ldots,D\bfeps_d'\}$.
\end{proof}

When $D = 2$, we have that $\mH = \emptyset$ and $s\mA= \Delta_{s,e}$ for all $s \in \N$. As a consequence,  
$\sigma(\mA) =  \lceil d - \frac{d+2-1}{2} \rceil =  \lceil \frac{d-1}{2} \rceil$.
Whenever $D \geq 3$ we have the following generalization of Theorem \ref{thm:sigma_smooth}:

\begin{theorem} \label{thm:sigma_1psing_general}
Let $\mA = \{\bfa_0,\ldots,\bfa_n\} \subset \N^d$, $n\geq d \geq 1$, $D \geq 3$ be a set such that
\[\{\mathbf{0}\} \cup \{(D-1)\bfeps_i'+\bfeps_j' \mid 1\leq i,j\leq d\} \cup \{(D-e)\bfeps_i' \mid 1\leq i \leq d\} \subset \mA,\] for some $1\leq e \leq D$ such that $e$ divides $|\bfa_k|$ for all $k=0,\ldots,n$.
Then, the sumsets regularity of $\mA$ satisfies that
\[ d - \frac{d-e+1}{D} \leq \sigma(\mA) \leq \frac{D}{e} \left[ (d-1)(D-2) +\frac{D}{e}-2 \right].\]
\end{theorem}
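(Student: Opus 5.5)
The upper bound will follow from Proposition~\ref{prop:structure_sumsets} together with Lemma~\ref{lem:dependedimcone}. The lower bound needs a witness vector that shows the sumset condition $s\mA=\Delta_{s,e}\setminus\mH$ fails for small $s$.

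\emph{Upper bound.} Put $t_0=(D-2)(d-1)+\frac{D}{e}-2$ and $s_0=\frac{D}{e}t_0$, the number on the right-hand side of the statement. I will check the three conditions of Definition~\ref{def:sumsets_reg_1psing} at $s=s_0$.
\begin{itemize}
\item By Proposition~\ref{prop:structure_sumsets}, $\mH\subset\Delta_{t_0,e}$. Since $D/e\geq 1$ we have $t_0\leq s_0$, so $\mH\subset\Delta_{s_0,e}$.
\item Again by Proposition~\ref{prop:structure_sumsets}, $s'\mA=\Delta_{s',e}\setminus\mH$ for every $s'\geq s_0$.
\item By Lemma~\ref{lem:dependedimcone}, the equality $\Delta_{s_0,e}+\{\mathbf 0,D\bfeps_1',\dots,D\bfeps_d'\}=\Delta_{s_0+1,e}$ holds as soon as $s_0\geq d-\frac{d+e-1}{D}$. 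For $D\geq 3$ and $d\geq 2$ this holds because $s_0\geq t_0\geq d-1+\frac{D}{e}-2+(D-3)(d-1)$, and the last expression is at least $d-1\geq d-\frac{d+e-1}{D}$ once one checks the edge cases $D/e=1$ and $D=3$ separately.
\end{itemize}
The case $d=1$, where the Proposition's hypotheses may degenerate, will be verified by hand. The case $e=D$, where $s_0=D-2+(d-1)(D-2)-1$, I will treat through Remark~\ref{rem:sup1psing}\ref{rem:sup1psing_ii} if the general inequality fails there.

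\emph{Lower bound.} Take $s=\sigma(\mA)$. The conditions in Definition~\ref{def:sumsets_reg_1psing} at $s$ give, for the pair $s,s+1$:
\[
(s+1)\mA=\Delta_{s+1,e}\setminus\mH, \qquad s\mA=\Delta_{s,e}\setminus\mH .
\]
I aim to show that these force $sD\geq dD-d+e-1$. Arguing by contradiction, suppose $sD+e-1\leq d(D-1)-e$, that is, $sD\leq dD-d-e+1$ minus a margin of $e$. I will build a vector
\[
\bfy=(D-1)(\bfeps_1'+\dots+\bfeps_q')+r\,\bfeps_{q+1}'
\]
with $e\mid |\bfy|$ and $|\bfy|$ chosen in the window $(sD,\,(s+1)D]$, so that $\bfy\in\Delta_{s+1,e}$. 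This is the same shape as the witness in the proof of Lemma~\ref{lem:dependedimcone}, but with $|\bfy|$ pushed up by $e-1$ extra units instead of down.

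The key point is to show $\bfy\notin(s+1)\mA$, or alternatively that $\bfy\notin\mH$ but $\bfy$ is not reachable. Since every coordinate of $\bfy$ is at most $D-1$, a decomposition into $s+1$ elements of $\mA$ cannot use any $D\bfeps_i'$. Each remaining generator has size at most $D$, and size exactly $D$ only for non-axis vectors. The plan is a counting argument that weighs how far each generator's size falls short of $D$ (a multiple of $e$) against the total deficit $(s+1)D-|\bfy|$. Using $e\mid|\bfa|$, this should show that the deficit available is strictly less than what the coordinate cap at $D-1$ forces. If that works, $\bfy$ can be placed either in $\Delta_{s+1,e}\setminus(s+1)\mA$ (contradicting $(s+1)\mA=\Delta_{s+1,e}\setminus\mH$), or in $\mH$ but outside $\Delta_{s,e}$ (contradicting $\mH\subset\Delta_{s,e}$).

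\emph{Main obstacle.} The hard step is this last one: proving the witness $\bfy$ is not in $(s+1)\mA$ while also not in $\mH$. The generators $\bfa_k$ beyond those listed in \eqref{eq:A_sup1singp} are arbitrary, apart from satisfying $e\mid|\bfa_k|$ and $|\bfa_k|\leq D$. I would first try choosing $\bfy$ so that every decomposition has total deficit smaller than $e$, which forces all $s+1$ summands to have size exactly $D$. Any summand of size $D$ with all coordinates $\leq D-1$ has at least two positive coordinates, and the counting would then use the support of $\bfy$ to reach a contradiction. If $\mA$ is too rich for this to work, I would fall back on the weaker $\lceil d-\frac{d+e-1}{D}\rceil$ bound and try to close the gap of $\frac{2e-2}{D}$ by examining the second condition at $s-1$.
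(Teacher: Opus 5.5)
\textbf{The lower bound is a genuine gap, and your plan cannot work.} The inequality you aim for, $\sigma(\mA)\ge d-\frac{d-e+1}{D}$ (equivalently $\sigma D\ge dD-d+e-1$), is false as printed. Take $d=3$, $D=4$, $e=2$ and $\mA=\Delta_{1,2}=\{\bfy\in\N^3 : |\bfy|\in\{0,2,4\}\}$, which satisfies every hypothesis. Any $\bfy$ with $|\bfy|=2k\le 4s$ is a sum of $k$ vectors of size $2$, hence of at most $\lceil k/2\rceil\le s$ elements of $\mA$. So $s\mA=\Delta_{s,2}$ for all $s$ and $\mH=\emptyset$. By Lemma~\ref{lem:dependedimcone}, the third condition of Definition~\ref{def:sumsets_reg_1psing} holds exactly when $s\ge 2$ (the witness at $s=1$ is $(3,3,0)$). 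Hence $\sigma(\mA)=2<\frac52$.

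The same example shows why your witness step breaks. When $\mA=\Delta_{1,e}$, every element of $\Delta_{s+1,e}$ lies in $(s+1)\mA$, so no counting over the extra generators can produce $\bfy\notin(s+1)\mA$. The first two conditions of the definition impose no lower bound on $\sigma$ at all, and your argument never uses the third.

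The numerator $d-e+1$ is a typo for $d+e-1$; Example~\ref{ex:1sing2} writes $2-\frac34$ for $d=2$, $D=4$, $e=2$. The corrected bound is what the paper proves, in one line: $\sigma(\mA)$ satisfies the third condition, so Lemma~\ref{lem:dependedimcone} gives $\sigma(\mA)\ge d-\frac{d+e-1}{D}$. What you call a fallback is the entire proof, and the extra $\frac{2e-2}{D}$ cannot be gained.

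\textbf{Upper bound.} Here you follow the paper's route. Proposition~\ref{prop:structure_sumsets} gives the first two conditions at $s_0$ (via $t_0\le s_0$), and Lemma~\ref{lem:dependedimcone} gives the third once $s_0\ge d-\frac{d+e-1}{D}$.

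Your final step $d-1\ge d-\frac{d+e-1}{D}$ is false whenever $d+e-1<D$, e.g.\ for $e=1$ and $d<D$. For $e<D$ and $d\ge 2$, argue instead that $s_0\ge 2t_0\ge 2(d-1)\ge d$.

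The edge cases you postpone cannot be checked by hand, because the stated bound fails there.
\begin{itemize}
\item For $d=2$, $D=e=3$, the hypotheses force $\mA=\{\mathbf 0,(3,0),(2,1),(1,2),(0,3)\}=\Delta_{1,3}$. Then $\mH=\emptyset$ and $(2,1)\notin\Delta_{0,3}+\{\mathbf 0,(3,0),(0,3)\}$, so $\sigma(\mA)=1$ while the right-hand side is $0$.
\item For $d=3$, $D=e=3$ and $\mA=\Delta_{1,3}$, one gets $\sigma(\mA)=2$ (witness $(2,2,2)$) against a bound of $1$.
\item For $d=1$, one gets $\sigma=0$ against $-1$ when $e=D$, and $\sigma=1$ against $0$ when $e=D/2$.
\end{itemize}
Incidentally, for $e=D$ one has $s_0=(d-1)(D-2)-1$, not $d(D-2)-1$.

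The paper's proof has the same hole, since it asserts $d-\frac{d+e-1}{D}\le s'$ unconditionally. What the argument actually proves is $\sigma(\mA)\le\max\{s_0,\lceil d-\frac{d+e-1}{D}\rceil\}$, which equals $s_0$ outside the cases listed above.
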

\begin{proof}By Lemma \ref{lem:dependedimcone} we have that $\Delta_{s,e} +\{\mathbf{0},D\bfeps_1',\ldots,D\bfeps_d'\} = \Delta_{s+1,e}$ \iff{} $s\geq d- \frac{d+e-1}{D}$. Let $s' := \frac{D}{e} \left[ (d-1)(D-2) +\frac{D}{e}-2 \right]$, by Proposition \ref{prop:structure_sumsets} it follows that $\mH \subset \Delta_{s'}$ and that $s \mA = \Delta_{s,e} \setminus \mH$ for all $s \geq s'$. Since $d- \frac{d+e-1}{D} \leq s'$, we have that $\sigma(\mA) \leq s'$ and the result follows.
\end{proof}

\begin{remark}
The upper bound on $\sigma(\mA)$ in Theorem \ref{thm:sigma_1psing_general} is not sharp in general, as computational experiments suggest.
In the case $e=D$, it is sharp. This follows from Example \ref{ex:sigma_smooth}.
When $1<e<D$, the family \[\mA = \{\mathbf{0}\} \cup \{(D-1)\bfeps_i'+\bfeps_j' \mid 1\leq i,j\leq d\} \cup \{(D-e)\bfeps_i', \, e\bfeps_i' \mid 1\leq i \leq d\} \] satisfies $\sigma(\mA) \geq (\frac{D}{e}-2) d$. Indeed, the element $\bfy := (D-2e, \ldots, D-2e)$  can only be written as a combination of the elements of $\mA$ as $\bfy := \sum_{i = 1}^d \frac{D-2e}{e} (e \bfeps_i')$, hence $\bfy \in s \mA \setminus (s-1) \mA$ for $s = (\frac{D}{e}-2) d$. 
\end{remark}

\begin{example} \label{ex:1sing2} Consider the set \[ \mA = \{(0,0), (4,0), (0,4), (3,1), (1,3), (2,0), (0,2)\} \subset \N^2; \]
of Example \ref{ex:1sing}. Since $d = 2$, $D = 4$ and $e = 2$, then Theorem \ref{thm:sigma_1psing_general} ensures that 
$2 - \frac{3}{4} \leq \sigma(\mA) \leq  4$.
Nevertheless, in Example \ref{ex:1sing} we saw that 
\begin{itemize}
\item $\mH = \{(1,1)\} \subset \Delta_{1,2}$,
\item $1 \mA \neq \Delta_{1,2} \setminus \mH$, and 
\item $s \mA = \Delta_{s,2} \setminus \mH$ for all $s \geq 2$.
\end{itemize}
Moreover, $\Delta_{s,2} +\{(0,0),(4,0),(0,4)\} = \Delta_{s+1,2}$ \iff{} $s\geq 2 - \frac{3}{4}$, by Lemma~\ref{lem:dependedimcone}.
Hence, we obtain that $\sigma(\mA) = {\rm max}(1,2,\lceil 2 - \frac{3}{4}\rceil) = 2.$
\end{example}

We finish this section by showing that the behavior of Proposition \ref{prop:structure_sumsets} (i.e., $s\mA = \Delta_{s,e} \setminus \mH$ for all $s\gg 0$) characterizes the sets $\mA \subset \N^d$ of the form \eqref{eq:A_sup1singp} and, in turn, characterizes simplicial projective toric varieties with a single  singular point.

\begin{theorem} \label{thm:strThmCharacts1ptosing}
Let $n\geq d \geq 1$, $D\geq 2$, and $\mA = \{\bfa_0,\bfa_1,\ldots,\bfa_n\} \subset \N^d$ be a finite set, $\bfa_i = (a_{i1},\ldots,a_{id})$, such that $\{\mathbf{0},D\bfeps_1',\ldots,D\bfeps_d'\} \subset \mA$ and $|\bfa_i| \leq D$ for all $i \in \{0,\ldots,n\}$. 
The following conditions are equivalent:
\begin{enumerate}[(a)]
\item\label{thm:strThmCharacts1ptosing_1}  
$\mX_\uA \subset \Pn{n}_\k$, has a single singular point, and it is $(1:0:\dots:0)$.
\item\label{thm:strThmCharacts1ptosing_2} 
$\{ (D-1)\bfeps_i + \bfeps_j \mid  1\leq i,j\leq d \} \cup \{ e\bfeps_0 + (D-e) \bfeps_j \mid 1 \leq j \leq d\} \subset \uA$, where $\{\bfeps_0,\bfeps_1,\ldots,\bfeps_d\}$ is the canonical basis of $\N^{d+1}$,
$1\leq e \leq D$ is a divisor of $D$ that divides $a_{i0}$ for all $i\in \{0,\ldots,n\}$, and if $e=1$ then there exists $j\in \{1,\ldots,d\}$ such that $(D-1)\bfeps_0 + \bfeps_j \notin \uA$. 
\item\label{thm:strThmCharacts1ptosing_c} 
$\{\mathbf{0}\} \cup \{ (D-1)\bfeps_i' + \bfeps_j' \mid 1\leq i,j \leq d\} \cup \{ (D-e) \bfeps_i \mid 1\leq i \leq d \} \subset \mA$, where $1\leq e \leq D$ is a divisor of $D$ that divides $a_{i1}+\dots+a_{id}$ for all $i\in \{0,\ldots,n\}$, and if $e=1$ then there exists $j\in \{1,\ldots,d\}$ such that $\bfeps_j' \notin \mA$. 
\item\label{thm:strThmCharacts1ptosing_3} 
There exist a finite set $\mH \subset \N_e^d$, with $\mH\neq \emptyset$ if $e=1$, and a number $s_0 \in \N$ such that $s\mA = \Delta_{s,e}\setminus \mH$ for all $s \geq s_0$.
\end{enumerate}
\end{theorem}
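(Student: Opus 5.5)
The proof will follow the cycle of implications used in Theorem~\ref{thm:strThmCharactSmooth}.

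The equivalence \ref{thm:strThmCharacts1ptosing_1}~$\Leftrightarrow$~\ref{thm:strThmCharacts1ptosing_2} is Proposition~\ref{prop:charact_1singularpt}~\ref{prop:charact_1singularpt_2}, applied to $\mB=\uA$. The only care needed is the normalization: in the proof of that proposition the gcd of the coordinates was assumed to be $1$, which we keep as a standing reduction. The equivalence \ref{thm:strThmCharacts1ptosing_2}~$\Leftrightarrow$~\ref{thm:strThmCharacts1ptosing_c} is a translation through the homogenization $\ubfa_i=(D-|\bfa_i|,\bfa_i)$:
\begin{itemize}
\item $(D-1)\bfeps_i+\bfeps_j$ with $i,j\geq1$ corresponds to $(D-1)\bfeps_i'+\bfeps_j'$;
\item $e\bfeps_0+(D-e)\bfeps_j$ corresponds to $(D-e)\bfeps_j'$;
\item $e\mid a_{i0}=D-|\bfa_i|$ together with $e\mid D$ is equivalent to $e\mid |\bfa_i|$;
\item $(D-1)\bfeps_0+\bfeps_j\notin\uA$ is equivalent to $\bfeps_j'\notin\mA$.
\end{itemize}

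For \ref{thm:strThmCharacts1ptosing_c}~$\Rightarrow$~\ref{thm:strThmCharacts1ptosing_3}, we take $\mH=\N_e^d\setminus\la\mA\ra$. If $D\geq3$, Proposition~\ref{prop:structure_sumsets} gives finiteness of $\mH$ and $s\mA=\Delta_{s,e}\setminus\mH$ for $s\geq s_0$. If $D=2$, then $e=2$ and $s\mA=\Delta_{s,2}$, as noted before that proposition. It remains to check $\mH\neq\emptyset$ when $e=1$. In that case some $\bfeps_j'\notin\mA$. Since $\bfeps_j'$ has $|\bfeps_j'|=1$, it can only be a sum of elements of $\mA$ if it is itself an element of $\mA$. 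Hence $\bfeps_j'\in\mH$.

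The converse \ref{thm:strThmCharacts1ptosing_3}~$\Rightarrow$~\ref{thm:strThmCharacts1ptosing_c} is the real content. Fix $s\geq s_0$, chosen large enough that $sD$ exceeds every $|\bfh|+D$ with $\bfh\in\mH$.
\begin{enumerate}
\item \emph{The divisibility $e\mid|\bfa_i|$.} Since $\mA\subset s\mA\subset\N_e^d$, we get $e\mid|\bfa_i|$ for every $i$. Also $D\bfeps_1'\in\mA$, so $e\mid D$.
\item \emph{The elements $(D-1)\bfeps_i'+\bfeps_j'$.} The point $\bfy=(sD-1)\bfeps_i'+\bfeps_j'$ has $|\bfy|=sD$. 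It is far from the finite set $\mH$ and lies in $\N_e^d$ (as $e\mid D$), so $\bfy\in s\mA$. Write it as a sum of $s$ elements of $\mA$, each of norm at most $D$. Since $|\bfy|=sD$, every summand has norm exactly $D$. One summand contains the $\bfeps_j'$ contribution and has norm $D$, so it must be $(D-1)\bfeps_i'+\bfeps_j'$. The case $i=j$ is just $D\bfeps_i'$, which is already in $\mA$.
\item \emph{The elements $(D-e)\bfeps_i'$.} Similarly take $\bfy=(sD-e)\bfeps_i'\in\N_e^d$, which lies in $s\mA$. Its summands are multiples of $\bfeps_i'$ whose norms are multiples of $e$ and total $sD-e$. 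So all summands are $D\bfeps_i'$ (or $\mathbf{0}$) except for a deficit of $e$ spread over them. A single summand carrying the whole deficit gives $(D-e)\bfeps_i'\in\mA$. The summands could instead split the deficit, e.g.\ $(D-ke)\bfeps_i'$ plus zeros, which would not directly yield $(D-e)\bfeps_i'$.
\end{enumerate}

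This last point is the main obstacle. I would try two routes, in this order.
\begin{itemize}
\item Replace the exponent $s$ by $s+1$: then $\mathbf{0}$ can absorb count, and we compare with minimality of the number of summands via the norm bound.
\item Alternatively, use the set of all $s$ with the property and argue via $\gcd$. The integers $m$ with $m\bfeps_i'\in\la\mA\ra$ must contain all multiples of $e$ beyond some point, because $\mH$ is finite. So the generators of $\la\mA\ra\cap\N\bfeps_i'$ have gcd $e$. Counting exactly $s$ summands for $sD-e$ then forces a generator $D-e$.
\end{itemize}

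Finally, if $e=1$ then $\mH\neq\emptyset$, so some $\bfh\notin\la\mA\ra$. If all $\bfeps_j'$ were in $\mA$, then $\la\mA\ra=\N^d$, a contradiction. Hence some $\bfeps_j'\notin\mA$.
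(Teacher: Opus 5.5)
Your chain of implications is exactly the paper's. You use (a)$\Leftrightarrow$(b) via Proposition~\ref{prop:charact_1singularpt}; the paper relies on the same gcd-one normalization you flag, since rescaling $\uA$ does not change $I_\uA$. Then (b)$\Leftrightarrow$(c) by homogenization, (c)$\Rightarrow$(d) via Proposition~\ref{prop:structure_sumsets}, and (d)$\Rightarrow$(c) by decomposing $(sD-1)\bfeps_i'+\bfeps_j'$ and $(sD-e)\bfeps_i'$ for large $s$, together with $\mH\neq\emptyset\Rightarrow\la\mA\ra\neq\N^d$. The one gap is that you leave the step $(D-e)\bfeps_i'\in\mA$ open and offer two tentative routes. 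The paper asserts it in one line, and it is indeed immediate; the obstruction you describe comes from a miscount.

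Because the number of summands is fixed at $s$, write $(sD-e)\bfeps_i'=\sum_{k=1}^{s}c_k\bfeps_i'$ with every $c_k\bfeps_i'\in\mA$, zeros included. Every summand is a multiple of $\bfeps_i'$ because the target is. Then $\sum_{k=1}^{s}(D-c_k)=e$. Each term $D-c_k$ is $\geq 0$ by the norm bound, and it is divisible by $e$ because your step~1 already gives $e\mid D$ and $\mA\subset\N_e^d$. Nonnegative multiples of $e$ summing to $e$ consist of a single $e$ and zeros, so some $c_k=D-e$.

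In particular, a $\mathbf{0}$ summand contributes $D$, not $0$, to this deficit. So any decomposition containing a zero summand, such as your $(D-ke)\bfeps_i'$ plus zeros, has deficit at least $D$ and is impossible unless $e=D$. In that case $(D-e)\bfeps_i'=\mathbf{0}\in\mA$ trivially. For the same reason your first route points the wrong way: allowing $s+1$ summands lets $\mathbf{0}$ absorb an extra $D$ of deficit, which destroys exactly the rigidity you need. The gcd discussion in your second route is unnecessary. Its last sentence (counting exactly $s$ summands forces $D-e$) is the whole argument once made precise as above.
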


\begin{proof}
The equivalence \ref{thm:strThmCharacts1ptosing_1} $\Leftrightarrow$ \ref{thm:strThmCharacts1ptosing_2} is Proposition~\ref{prop:charact_1singularpt},
\ref{thm:strThmCharacts1ptosing_2} $\Leftrightarrow$ \ref{thm:strThmCharacts1ptosing_c} is direct from the construction of $\uA$, 
and the implication \ref{thm:strThmCharacts1ptosing_c} $\Rightarrow$ \ref{thm:strThmCharacts1ptosing_3} is Proposition~\ref{prop:structure_sumsets}. 
Let us prove \ref{thm:strThmCharacts1ptosing_3} $\Rightarrow$ \ref{thm:strThmCharacts1ptosing_c}. 
Take $s \geq s_0$ such that $\mH \subset \Delta_{s-1,e}$, and fix $i,j$ with $1\leq i,j \leq d$. Since $(sD-1)\bfeps_i'+\bfeps_j' \in s\mA$, then $(D-1)\bfeps_i'+\bfeps_j' \in \mA$. Moreover, since $(sD-e)\bfeps_i' \in s\mA$, then $(D-e)\bfeps_i' \in \mA$.
If $e=1$, $\mH \neq \emptyset$ by hypothesis, so $\la \mA \ra \neq \N^d$. Hence, one has that there exists $j\in \{1,\ldots,d\}$ such that $\bfeps_j' \notin \mA$. Thus, we have proved \ref{thm:strThmCharacts1ptosing_c}. 
\end{proof}

\section{Castelnuovo-Mumford regularity and sumsets regularity}\label{sec3}

Let $\mA = \{\bfa_0,\ldots,\bfa_n\} \subset \N^d$, $n \geq d\geq 1$, with $\bfa_i = (a_{i1},\ldots,a_{id})$  and $|\bfa_i| \leq D$ for $i\in \{0,\ldots,n\}$, and assume that $\bfa_0 = \mathbf{0}$ and $\bfa_i = D\bfeps_i'$, $i\in \{1,\ldots,d\}$, for some $D \in \N$, $D\geq 2$. 
Let $\mS_\uA \subset \N^{d+1}$ be the affine semigroup generated by $\uA$, $\mS_\uA = \la \uA \ra$. By hypothesis, $\mS_\uA$ is simplicial, and the set of extremal rays of the rational cone spanned by $\uA$ is $\mathcal{E} := \{D\bfeps_0,\ldots,D\bfeps_d\}$. Let $\mX_\uA \subset \Pn{n}_\k$ be the projective toric variety determined by $\uA$; the coordinate ring of $\mX_\uA$ is isomorphic (as a graded $\kxo$-module) to the semigroup algebra $\k[\mS_\uA]$. \newline

For all $\bfs\in \mS_\uA$, consider the abstract simplicial complex $T_\bfs$ defined by 
\[T_\bfs \coloneq  \left\{ \mF \subset \mathcal{E} : \bfs - \sum_{\bfe \in \mF} \bfe \in \mS_\uA \right\} \,.\]
By \cite[Thm.~19]{Briales2000}, the Castelnuovo-Mumford regularity of $\k[\mX_\uA]$ is given by
\begin{equation}
\reg{\k[\mX_\uA]} = \max\left\{ \frac{|\bfs|}{D}- (i+1) : \bfs \in \mS_\uA \text{, and } \dim{\tilde{H}_i (T_\bfs)} \neq 0 \right\} .
\label{eq:CMreg_Briales}
\end{equation}
Note that this formula also comes from the short resolution of $\k[\mX_\uA]$ studied in \cite{GGG2025shortres}. \newline

In this section, we relate the Castelnuovo-Mumford regularity of $\k[\mX_\uA]$, $\reg{\k[\mX_\uA]}$, to the sumsets regularity of $\mA$, $\sigma(\mA)$, when $\mX_\uA$ is either smooth or has a single singular point. Moreover, we prove that the Eisenbud-Goto bound  holds for simplicial projective toric varieties of dimension $d \geq 3$ with a single singular point.

\subsection{The smooth case}

\begin{theorem}\label{thm:reg_sigma_smooth}
Let $\mA = \{\bfa_0,\ldots,\bfa_n\} \subset \N^d$, $n\geq d \geq 1$, be a set such that \[\{\mathbf{0}\} \cup \{(D-1)\bfeps_i'+\bfeps_j' \mid 1\leq i,j\leq d\} \cup \{\bfeps_i' \mid 1\leq i \leq d\} \cup \{(D-1)\bfeps_i' \mid 1\leq i \leq d\} \subset \mA, \]  
and consider $\mX_\uA \subset \Pn{n}_\k$ the smooth simplicial projective toric variety determined by $\uA$.
Then, 
\[\reg{\k[\mX_\uA]} = \sigma(\mA) \, .\]
\end{theorem}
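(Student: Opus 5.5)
The plan is to compute both sides through the Hochster-like formula \eqref{eq:CMreg_Briales}. The key input is Corollary~\ref{cor:strucutre_smooth} in its homogenized form: once $t\mA=\Delta_t$, every $\bfy\in\N^{d+1}$ with $|\bfy|=tD$ lies in $t\uA$.

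First I would check that the set of $s$ satisfying the two conditions in Definition~\ref{def:sumsets_reg_smooth} is upward closed. The condition on $\Delta_s$ is upward closed by Lemma~\ref{lem:dependedim}. Given that condition, the chain of inclusions in the proof of Corollary~\ref{cor:strucutre_smooth} propagates $s\mA=\Delta_s$ to $s+1$. Hence, for every $t\geq\sigma$, the semigroup $\mS_\uA$ contains all lattice points of $\N^{d+1}$ of degree $tD$. Moreover, Lemma~\ref{lem:dependedim}, read homogeneously, says that every $\bfy\in\N^{d+1}$ with $|\bfy|=tD$ and $t\geq\sigma+1$ has some coordinate $y_j\geq D$.

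\emph{Upper bound $\reg{\k[\mX_\uA]}\leq\sigma$.} Let $\bfs\in\mS_\uA$ with $|\bfs|=tD$, and suppose $\tilde H_i(T_\bfs)\neq0$. We must show $t-(i+1)\leq\sigma$, so assume instead $t\geq\sigma+i+2$. Let $V=\{j: s_j\geq D\}$, so that $\{D\bfeps_j : j\in V\}$ is the set of candidate vertices. For a face $\mF\subset\mathcal E$ with $|\mF|\leq i+2$, the vector $\bfs-\sum_{\bfe\in\mF}\bfe$ has degree $(t-|\mF|)D$ with $t-|\mF|\geq\sigma$. Therefore it lies in $\mS_\uA$ if and only if it is componentwise nonnegative, that is, if and only if $\mF\subset\{D\bfeps_j:j\in V\}$. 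Consequently the $(i+1)$-skeleton of $T_\bfs$ coincides with that of the full simplex on $V$, and $\tilde H_i$ depends only on this skeleton.

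If $V\neq\emptyset$, the full simplex on $V$ is acyclic, which is a contradiction. If $V=\emptyset$, then all coordinates of $\bfs$ are $<D$ while $t\geq\sigma+1$ (note $i\geq -1$). This contradicts the consequence of Lemma~\ref{lem:dependedim} noted above.

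\emph{Lower bound $\reg{\k[\mX_\uA]}\geq\sigma$.} We may assume $\sigma\geq1$. By minimality and upward closedness, one of the two conditions fails at $\sigma-1$, and I split into two cases.

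(a) Suppose $(\sigma-1)\mA=\Delta_{\sigma-1}$ but $\Delta_{\sigma-1}+\{\mathbf 0,D\bfeps_1',\ldots,D\bfeps_d'\}\neq\Delta_\sigma$. By the argument in the proof of Lemma~\ref{lem:dependedim}, there is a point $\bfy\in\Delta_\sigma$ with $|\bfy|>(\sigma-1)D$ and all entries $<D$. Its homogenization $\bfs$ has degree $\sigma D$, lies in $\mS_\uA$ because $\sigma\mA=\Delta_\sigma$, and has all coordinates $<D$; the homogenizing coordinate is $\sigma D-|\bfy|<D$. Hence $T_\bfs=\{\emptyset\}$, so $\tilde H_{-1}(T_\bfs)\neq0$, and formula \eqref{eq:CMreg_Briales} gives $\reg{\k[\mX_\uA]}\geq\sigma-0=\sigma$.

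(b) Suppose $(\sigma-1)\mA\neq\Delta_{\sigma-1}$. Then $\hf{\k[\mX_\uA]}(\sigma-1)=|(\sigma-1)\mA|<|\Delta_{\sigma-1}|$. Since $|\Delta_t|=\binom{tD+d}{d}$ is the Hilbert polynomial (it agrees with $\hf{}$ for $t\geq\sigma$), the Hilbert function differs from the Hilbert polynomial at $t=\sigma-1$. Now $\k[\mX_\uA]$ is a domain, so $H^0_{\mathfrak m}=0$. Therefore $\hf{}(t)-{\rm HP}(t)=\sum_{i\geq1}(-1)^i\dim H^i_{\mathfrak m}(\k[\mX_\uA])_t$, and a nonzero value forces $t\leq a_i\leq\reg{\k[\mX_\uA]}-i$ for some $i\geq1$. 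Thus $\sigma-1\leq\reg{\k[\mX_\uA]}-1$.

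I expect the main obstacle to be the bookkeeping in the upper bound. One must ensure that the relevant skeleton of $T_\bfs$ is really determined by componentwise nonnegativity, which uses $t-|\mF|\geq\sigma$ for all faces up to size $i+2$. One must also treat the degenerate case $V=\emptyset$, where the second condition in the definition of $\sigma$ is exactly what is needed. Case (b) of the lower bound relies on the standard local cohomology description of regularity rather than on \eqref{eq:CMreg_Briales}. If one prefers to stay within the paper's framework, one could instead exhibit an explicit $\bfs$ with nonvanishing $\tilde H_{-1}$ or $\tilde H_0$, but I would try the local cohomology route first.
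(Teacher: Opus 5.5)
Your proof is correct. It agrees with the paper's argument except in one case of the lower bound.

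\textbf{Where it matches the paper.} Your upper bound is the paper's Lemma~\ref{lem:reg_sigma_smooth} in streamlined form. The paper shows the vertex set $\mE_\bfy$ is nonempty, using the second condition in the definition of $\sigma$ exactly as in your case $V=\emptyset$. It then shows that every subset of $\mE_\bfy$ of size at most $\min(\ell,d+1)$ is a face. This is your skeleton comparison, since $\mE_\bfy=\{D\bfeps_j : j\in V\}$ in that range. Your case (a) of the lower bound is identical to the paper's.

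\textbf{Where it differs: case (b).} The paper stays inside formula \eqref{eq:CMreg_Briales}. It takes $\bfz'\in\Delta_{\sigma-1}\setminus(\sigma-1)\mA$ and homogenizes it to $\bfz\notin\mS_\uA$ of degree $(\sigma-1)D$. It then sets $\bfy=\bfz+\sum_{k=0}^d D\bfeps_k$. Every proper subset of $\mE$ is a face of $T_\bfy$, because the degrees stay $\geq\sigma D$, while $\mE$ itself is not. So $T_\bfy$ is the boundary of the $d$-simplex, and $\widetilde H_{d-1}(T_\bfy)\neq 0$ gives $\reg{\k[\mX_\uA]}\geq(\sigma+d)-d=\sigma$.

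You instead use the Grothendieck--Serre formula together with $H^0_{\mathfrak m}=0$ for a domain. This is the general fact that, for a module of positive depth, the last degree where the Hilbert function differs from the Hilbert polynomial is at most the regularity minus one.

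\textbf{What each route buys.} Your route is shorter and more conceptual. It uses nothing about $\mS_\uA$ beyond $\hf{\k[\mX_\uA]}(t)=|t\mA|$ and positive depth. The cost is that it leaves the combinatorial framework and imports the local-cohomology description of regularity. The paper's route is self-contained given \eqref{eq:CMreg_Briales}. It also exhibits an explicit multidegree $\bfy$, of degree $(\sigma+d)D$ with homology in position $d-1$, where the maximum in \eqref{eq:CMreg_Briales} is attained. Your argument does not provide that information.
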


\begin{lemma}\label{lem:reg_sigma_smooth}
With the hypotheses of Theorem~\ref{thm:reg_sigma_smooth}, let $\bfy \in \mS_\uA$ such that $|\bfy| = (\sigma+\ell)D$ for some $\ell \geq 1$. Then,
\[\widetilde{H}_i(T_\bfy) = 0 \text{, for all } -1\leq i \leq \min(\ell-2,d) \, .\]
\end{lemma}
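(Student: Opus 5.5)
Throughout, $\sigma=\sigma(\mA)$, and we work with $\uA$ via Corollary~\ref{cor:strucutre_smooth} and the relation \eqref{eq:relationsumsets}: for every $t\geq\sigma$, $t\uA$ is the set of all $\bfz\in\N^{d+1}$ with $|\bfz|=tD$. The plan is to prove that $T_\bfy$ contains the full $(\ell-1)$-skeleton of the simplex on $\mE$. This is enough: a complex on $d+1$ vertices containing all faces of dimension at most $\ell-1$ has vanishing reduced homology in degrees $-1\leq i\leq \ell-2$. If $\ell-1\geq d$, then $T_\bfy$ is the full simplex and is acyclic, which also covers the indices up to $\min(\ell-2,d)$.

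First, the skeleton claim. Let $\mF\subset\mE$ with $|\mF|=k\leq \ell$. We must show $\bfy-\sum_{\bfe\in\mF}\bfe\in\mS_\uA$. Set $\bfz=\bfy-\sum_{\bfe\in\mF}\bfe$. Then $|\bfz|=(\sigma+\ell-k)D$ with $\sigma+\ell-k\geq\sigma$. By the description of $t\uA$ above, $\bfz\in\mS_\uA$ exactly when $\bfz\in\N^{d+1}$, i.e.\ when no coordinate becomes negative. Here $\mF$ is a set of distinct elements $D\bfeps_j$, so each coordinate is reduced by at most $D$. The claim therefore comes down to checking that $y_j\geq D$ for every $j$ with $D\bfeps_j\in\mF$.

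This coordinate condition is the main obstacle: a subset $\mF$ with $|\mF|\le\ell$ may contain $D\bfeps_j$ even though $y_j<D$, so as stated the claim fails. I will repair the argument as follows. Let $J=\{j : y_j\geq D\}$ and $J^c=\{j : y_j<D\}$. Then $T_\bfy$ consists exactly of the faces $\mF\subseteq\{D\bfeps_j: j\in J\}$ with $|\mF|\leq \ell$, since those are precisely the faces for which $\bfy-\sum\bfe$ is nonnegative with the correct degree. The restriction $|\mF|\le \ell$ is automatic only when $k\le\ell$; when $|\mF|>\ell$ the degree drops below $\sigma D$ and membership is no longer guaranteed, but those faces do not affect homology in degrees $\le \ell-2$. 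So, in degrees $\le\ell-2$, $T_\bfy$ agrees with the $(\ell-1)$-skeleton of the full simplex on $\{D\bfeps_j : j\in J\}$, and it remains to show $J\neq\emptyset$.

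To get $J\ne\emptyset$, compare degrees. If $J=\emptyset$, then $|\bfy|\leq (d+1)(D-1)$. On the other hand, $|\bfy|\geq(\sigma+1)D$. By Theorem~\ref{thm:sigma_smooth} (and the $D=2$ computation) we have $\sigma\geq d-\frac{d}{D}$, hence $(\sigma+1)D\geq dD-d+D=(d+1)(D-1)+1$. This contradicts the first bound, so $J\neq\emptyset$. The $(\ell-1)$-skeleton of a nonempty simplex has zero reduced homology in degrees $-1,\dots,\ell-2$: at degree $-1$ the complex is nonempty, and the full simplex itself is contractible. This gives the vanishing for $-1\leq i\leq\min(\ell-2,d)$.
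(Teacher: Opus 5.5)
Your argument is correct and is essentially the paper's proof. Your $J$ is exactly the paper's vertex set $\mE_\bfy$: since the degree stays $\geq\sigma D$, $D\bfeps_j$ is a vertex of $T_\bfy$ iff $y_j\geq D$. Like the paper, you show every subset of it of size at most $\ell$ is a face and conclude by acyclicity of the skeleton. The only variation is that you prove $J\neq\emptyset$ by a degree count using $\sigma\geq d-\frac{d}{D}$, whereas the paper uses the decomposition $s\mA=(s-1)\mA+\{\mathbf{0},D\bfeps_1',\ldots,D\bfeps_d'\}$.

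One small citation fix: the description $t\uA=\{\bfz\in\N^{d+1}:|\bfz|=tD\}$ for all $t\geq\sigma$ follows from the definition of $\sigma$, as noted right after Definition~\ref{def:sumsets_reg_smooth}. It does not follow from Corollary~\ref{cor:strucutre_smooth}, which only covers $t\geq d(D-2)$.
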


\begin{proof}
Let $\pi: \N^{d+1} \rightarrow \N^d$ be the projection given by $\pi(z_0,z_1,\ldots,z_d) = (z_1,\ldots,z_d)$. Denote $s = \sigma+\ell$.
Denote by $\mathcal{E}_\bfy$ the vertex set of the simplicial complex $T_\bfy$, i.e., \[ \mathcal{E}_\bfy  = \{D \bfeps_{j} \, \vert \,
\bfy - D \bfeps_{j} \in \mS_\uA,\, 0 \leq j \leq d\}.\]

Let us prove that $\mE_\bfy \not= \emptyset$.  By \eqref{eq:relationsumsets}, we have that $\pi(\bfy) \in s \mA$.
Since $s\geq \sigma+1$, then $(s-1)\mA = \Delta_{s-1}, s \mA = \Delta_s$ and $\Delta_{s-1} + \{\mathbf{0}, D\bfeps_1',\ldots,D\bfeps_d'\} = \Delta_{s}$. Hence $(s-1)\mA + \{\mathbf{0}, D\bfeps_1',\ldots,D\bfeps_d'\} = s \mA$. If $\pi(\bfy) \in (s-1) \mA$, then $\bfy - D \bfeps_0 \in \mS_\uA$ and $D \bfeps_0 \in \mE_\bfy$. Otherwise,  $\bfz + D \bfeps_i' = \pi(\bfy)$ for some $\bfz \in (s-1)\mA$ and $i \in \{1,\ldots,d\}$. Again by \eqref{eq:relationsumsets}, we have that $ D \bfeps_i \in \mE_\bfy$.

{\it Claim:} Every subset of size at most $\min(\ell,d+1)$ of $\mathcal{E}_\bfy$ is a face of $T_\bfy$.

{\it Proof of the claim:} Take $D\bfeps_{j_1},\ldots,D\bfeps_{j_r} \in \mathcal{E}_\bfy$ with $r \leq \min(\ell,d+1)$. For all $1\leq k \leq r$, since $D\bfeps_{j_k} \in \mathcal{E}_\bfy$, then $y_{j_k} \geq D$. Hence, $\bfy - \sum_{k=1}^r D \bfeps_{j_k} \in \N^{d+1}$. Since $|\bfy-\sum_{k=1}^r D\bfeps_{j_k}| = (\sigma+\ell-r)D$ and $\sigma+\ell-r \geq \sigma$, then $\bfy-\sum_{k=1}^r D\bfeps_{j_k} \in \mS_\uA$, by the definition of $\sigma$. Therefore, $\{D\bfeps_{j_1},\ldots,D\bfeps_{j_r}\}$ is a face of $T_\bfy$.

If $|\mathcal{E}_\bfy| \leq \ell$ or $\ell \geq d+2$, then $T_\bfy$ is a full simplex, by the claim, and hence it is acyclic.
Assume $d+1 \geq |\mathcal{E}_\bfy| > \ell$. Then, by the claim, every subset of $\ell$ vertices of $\mathcal{E}_\bfy$ is a face of $T_\bfy$.
Therefore, 
\[
  \widetilde{H}_i(T_\bfy) \;=\; 0 \quad \text{for all } -1 \leq i\leq \ell-2.
\]
This follows from the following facts: (a) the full $(\ell-1)$-skeleton of the simplex is acyclic in dimensions $<\ell-1$, and (b) adding faces of dimension $> \ell-1$ does not affect homologies in dimension $i$ for $i < \ell-1$ (see, e.g., \cite{Hatcher}).
\end{proof}

\begin{proof}[Proof of Theorem~\ref{thm:reg_sigma_smooth}]
Set $\sigma = \sigma(\mA)$, and let us first prove that $\reg{\k[\mX_\uA]} \leq \sigma$. Take an element $\bfy \in \mS_\uA$.
\begin{itemize}
    \item If $|\bfy| \leq \sigma D$, then $\frac{|\bfy|}{D} -(i+1) \leq \sigma$ for all $i\geq -1$.
    \item If $|\bfy| = (\sigma + \ell)D$ for some $1\leq \ell \leq d+1$, then $\widetilde{H}_i(T_\bfy) = 0$ for all $-1\leq i \leq \ell-2$, by Lemma~\ref{lem:reg_sigma_smooth}, and note that $\frac{|\bfy|}{D}-(i+1) \leq \sigma$ for all $i > \ell-2$.
    \item If $|\bfy| \geq (\sigma+d+2)D$, then $\widetilde{H}_i(T_\bfy) = 0$ for all $-1\leq i \leq d$, by Lemma~\ref{lem:reg_sigma_smooth}.
\end{itemize}
Therefore, by \eqref{eq:CMreg_Briales}, it follows that $\reg{\k[\mX_\uA]} \leq \sigma$. To show the equality, we distinguish two cases.

If $(\sigma-1)\mA = \Delta_{\sigma-1}$, then there is an element $\bfy' \in \sigma \mA$ such that $\bfy' \notin (\sigma-1)\mA$ and $\bfy' - D\bfeps_i' \notin (\sigma-1)\mA$ for all $i\in \{1,\ldots,d\}$. Denote $\bfy = ( \sigma D - |\bfy'|, \bfy') \in \mS_\uA$. Then, one has that the simplicial complex $T_\bfy = \{ \emptyset \}$. Therefore, $\widetilde{H}_{-1} (T_\bfy) \neq 0$, and hence, $\reg{\k[\mX_\uA]} \geq \frac{|\bfy|}{D} = \sigma$.

Otherwise, consider an element $\bfz' \in \Delta_{\sigma -1 } \setminus (\sigma - 1)\mA$ and denote $\bfz = ( (\sigma-1)D - |\bfz'|, \bfz') \in \N^{d+1}$. One has that $|\bfz| = (\sigma-1)D$ and $\bfz \notin \mS_\uA$, and we take $\bfy = \bfz + \sum_{k=0}^d D\bfeps_k$. Observe that for every subset $\mF \subset \mE$ of size at most $d$, one has that $\bfy-\sum_{\bfe \in \mF} \bfe \in \mS_\uA$, since $|\bfy-\sum_{\bfe \in \mF} \bfe \in \mS_\uA| \geq \sigma D$, and $\bfy-\sum_{k=0}^d D\bfeps_k = \bfz \notin \mS_\uA$. Therefore, the simplicial complex $T_\bfy$ is the boundary of the $d$-simplex on the vertex set $\mE_\bfy = \{D\bfeps_0, \ldots,D\bfeps_d\}$, so $\widetilde{H}_i(T_\bfy) = 0$ for all $-1\leq i \leq d-2$ and $\widetilde{H}_{d-1}(T_\bfy) \neq 0$. Hence, by \eqref{eq:CMreg_Briales}, $\reg{\k[\mX_\uA]} \geq \frac{|\bfy|}{D}-d = \sigma$, which shows the desired equality.
\end{proof}

Combining Theorems~\ref{thm:sigma_smooth} and \ref{thm:reg_sigma_smooth}, we recover in Corollary \ref{cor:reg_smooth} the  bound for the Castelnuovo-Mumford regularity of $\k[\mX_\uA]$ by Herzog and Hibi \cite[Theorem~2.1]{Herzog2003}, which implies the Eisenbud-Goto bound, as shown in \cite[Corollary~2.2]{Herzog2003}.

\begin{corollary} \label{cor:reg_smooth}
If $\mX$ is a smooth simplicial projective toric variety, then 

\[ \reg{\k[\mX]} \leq \begin{cases} d(D-2) & \text{if } D \geq 3 ,\\ \lceil \frac{d}{2} \rceil & \text{if } D = 2. \end{cases}\]
\end{corollary}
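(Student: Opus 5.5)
The corollary follows by combining Theorem~\ref{thm:reg_sigma_smooth} with the bounds on $\sigma(\mA)$ already obtained. Since $\mX$ is a smooth simplicial projective toric variety of dimension $d$, Theorem~\ref{thm:simplicial_smooth} lets us write $\mX = \mX_\mB$ with $\mB \subset \N^{d+1}$ containing $\{\bfeps_i + (D-1)\bfeps_j \mid 0\leq i,j\leq d\}$ and $|\bfb| = D$ for all $\bfb\in\mB$. Dropping the zeroth coordinate gives $\mA \subset \N^d$ with $\uA = \mB$. This $\mA$ contains $\mathbf{0}$, $D\bfeps_i'$, $\bfeps_i'$, $(D-1)\bfeps_i'$ and $(D-1)\bfeps_i'+\bfeps_j'$. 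This is condition (c) of Theorem~\ref{thm:strThmCharactSmooth}, which is exactly the hypothesis of Theorem~\ref{thm:reg_sigma_smooth}. Hence $\reg{\k[\mX]} = \sigma(\mA)$, and it suffices to bound $\sigma(\mA)$.

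When $D \geq 3$, Theorem~\ref{thm:sigma_smooth} gives $\sigma(\mA) \leq d(D-2)$ directly.

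When $D = 2$, the listed elements already exhaust $\Delta_1$, so $\mA = \Delta_1$ and $s\mA = \Delta_s$ for all $s$. By Definition~\ref{def:sumsets_reg_smooth}, $\sigma(\mA)$ is then the least $s\in\N$ satisfying the condition of Lemma~\ref{lem:dependedim}, namely $s \geq d - \frac{d}{2} = \frac{d}{2}$. Therefore $\sigma(\mA) = \lceil d/2\rceil$. This also equals $d - \lfloor d/2 \rfloor$, as noted after Definition~\ref{def:sumsets_reg_smooth} and in Example~\ref{ej:sumsetreg_Veronese}.

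There is no real obstacle here; the corollary is a direct assembly of earlier results. The only care needed is bookkeeping. First, the reduction to the normalized form with $\bfb_i = D\bfeps_i$ has to be justified; this is the WLOG set up at the start of Section~\ref{sec1}, which preserves $\k[\mX]$ up to isomorphism. Second, the $D=2$ case must be handled separately, because Theorem~\ref{thm:sigma_smooth} assumes $D\geq 3$.
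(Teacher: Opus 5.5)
Your proposal is correct and follows the paper's own route. Theorem~\ref{thm:reg_sigma_smooth} gives $\reg{\k[\mX_\uA]}=\sigma(\mA)$, and Theorem~\ref{thm:sigma_smooth} gives $\sigma(\mA)\le d(D-2)$ when $D\ge 3$. When $D=2$, one has $\mA=\Delta_1$, so Lemma~\ref{lem:dependedim} gives $\sigma(\mA)=d-\lfloor d/2\rfloor=\lceil d/2\rceil$, which is exactly the remark after Definition~\ref{def:sumsets_reg_smooth} that the paper combines with those two theorems.
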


\subsection{Varieties with at most one singular point}

\begin{theorem}\label{thm:reg_sigma_1psing}
Let $\mA = \{\bfa_0,\ldots,\bfa_n\} \subset \N^d$ be a set such that \[\{\mathbf{0}\} \cup \{(D-1)\bfeps_i'+\bfeps_j' \mid 1\leq i,j\leq d\} \cup \{(D-e)\bfeps_i' \mid 1\leq i \leq d\} \subset \mA,\] for some $1\leq e \leq D$ such that $e$ divides $|\bfa_k|$, and $|\bfa_k| \leq D$ for all $k=0,\ldots,n$.
Let $\mX_\uA \subset \Pn{n}_\k$ be the simplicial projective toric variety determined by $\uA$, which has at most  one singular point. Then,
\[\reg{\k[\mX_\uA]} \leq \sigma(\mA)+1 \, .\]
\end{theorem}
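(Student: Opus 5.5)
Following the proof of Theorem~\ref{thm:reg_sigma_smooth}, I will use formula \eqref{eq:CMreg_Briales}. It suffices to show that for every $\bfy\in\mS_\uA$ with $|\bfy|=(\sigma+\ell)D$ and $\ell\geq 2$, one has $\widetilde H_i(T_\bfy)=0$ for all $-1\leq i\leq \min(\ell-3,d)$. Granting this, the three-case argument used in the smooth case goes through with the shift by one. If $|\bfy|\leq(\sigma+1)D$, then $|\bfy|/D-(i+1)\leq\sigma+1$ for all $i\geq -1$. If $|\bfy|=(\sigma+\ell)D$ with $\ell\geq 2$, the only possibly nonzero homology occurs for $i\geq \ell-2$, and then $|\bfy|/D-(i+1)\leq \sigma+1$. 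This gives $\reg{\k[\mX_\uA]}\leq\sigma+1$.

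The first step is to show that $\mE_\bfy\neq\emptyset$ whenever $|\bfy|=sD$ with $s\geq\sigma+1$. Set $s=\sigma+\ell$. By \eqref{eq:relationsumsets}, $\pi(\bfy)\in s\mA=\Delta_{s,e}\setminus\mH$. Since $s-1\geq\sigma$, the definition of $\sigma$ gives $\Delta_{s-1,e}+\{\mathbf 0,D\bfeps_1',\dots,D\bfeps_d'\}=\Delta_{s,e}$. If $\pi(\bfy)\in(s-1)\mA$, then $D\bfeps_0\in\mE_\bfy$. Otherwise, $\pi(\bfy)\in\Delta_{s,e}\setminus\Delta_{s-1,e}$ (using Remark~\ref{rem:defsigma}\ref{rem:defsigma_2}), so $\pi(\bfy)=\bfz+D\bfeps_i'$ with $\bfz\in\Delta_{s-1,e}$. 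Since $\mH\subset\Delta_{\sigma,e}$ and we need $\bfz\notin\mH$, I will choose $i$ with $y_i\geq D$ so that $|\bfz|=|\pi(\bfy)|-D>(s-2)D\geq (\sigma-1)D$ in the worst case. This does not place $\bfz$ outside $\Delta_{\sigma,e}$, and that is exactly where the extra $+1$ is needed.

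The key step is a version of the Claim in Lemma~\ref{lem:reg_sigma_smooth}: every subset of $\mE_\bfy$ of size at most $\min(\ell-1,d+1)$ is a face of $T_\bfy$. Take $D\bfeps_{j_1},\dots,D\bfeps_{j_r}\in\mE_\bfy$ with $r\leq \ell-1$. Then $\bfw:=\bfy-\sum_k D\bfeps_{j_k}$ lies in $\N^{d+1}$, satisfies $|\bfw|=(\sigma+\ell-r)D\geq(\sigma+1)D$, and has $e$ dividing $|\pi(\bfw)|$, because $e\mid D$ and the coordinate $y_0$ is a multiple of $e$ (every $a_{k0}$ is). We need $\pi(\bfw)\notin\mH$. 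When $|\pi(\bfw)|>\sigma D$, this follows from $\mH\subset\Delta_{\sigma,e}$, and then $\pi(\bfw)\in\Delta_{\sigma+\ell-r,e}\setminus\mH=(\sigma+\ell-r)\mA$, so $\bfw\in\mS_\uA$. The remaining case is when $\bfw$ has a large $0$-th coordinate, so that $|\pi(\bfw)|\leq\sigma D$. This is the main obstacle.

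To handle it, I will split according to whether $D\bfeps_0$ is among the removed vertices. If $D\bfeps_0$ is removed, then $\pi(\bfw)=\pi(\bfy)-\sum_{j_k\neq0}D\bfeps_{j_k}$. Since $\pi(\bfy)\in\langle\mA\rangle$ already, I will try to show that $\pi(\bfw)\in\langle\mA\rangle$ by writing $\pi(\bfy)$ with enough copies of $D\bfeps_{j_k}$, using $\bfy-D\bfeps_{j_k}\in\mS_\uA$ one vertex at a time. The point is that the $0$-th coordinate imposes no constraint, because $\bfeps_0$ only appears through the homogenization. Once $\pi(\bfw)\in\langle\mA\rangle$, membership in $(\sigma+\ell-r)\mA$ follows from $\pi(\bfw)\in\Delta_{\sigma+\ell-r,e}\setminus\mH$, since that set equals $(\sigma+\ell-r)\mA$. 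With the claim in hand, $T_\bfy$ contains the full $(\ell-2)$-skeleton of the simplex on $\mE_\bfy$, which is nonempty by the first step. Hence $\widetilde H_i(T_\bfy)=0$ for $-1\leq i\leq\ell-3$, exactly as in Lemma~\ref{lem:reg_sigma_smooth}. If this membership argument fails, the fallback is to accept the loss of one vertex, which is precisely what produces $\sigma+1$ instead of $\sigma$.
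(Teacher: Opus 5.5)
Your reduction via \eqref{eq:CMreg_Briales} to the vanishing of $\widetilde H_i(T_\bfy)$ for $i\le\ell-3$ is the paper's reduction (its Lemma~\ref{lem:reg_sigma_1psing} with $\ell$ shifted by one). Your argument when $|\pi(\mathbf w)|>\sigma D$ is correct and disposes of the paper's Case~1 ($D\bfeps_0\notin\mE_\bfy$), since there $|\pi(\mathbf w)|>(s-1-r)D\ge\sigma D$ automatically.

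The gap is the key claim itself. When $D\bfeps_0\in\mE_\bfy$, it is false that every subset of $\mE_\bfy$ of size at most $\min(\ell-1,d+1)$ is a face, so no ``one vertex at a time'' argument can prove it. You also never treat the sub-case where $D\bfeps_0$ is not removed. Take $\mA$ from Example~\ref{ex:1sing} ($D=4$, $e=2$, $\mH=\{(1,1)\}$, $\sigma=2$) and $\bfy=(10,5,5)$, so $|\bfy|=(\sigma+3)D$. Since $(5,5),(1,5),(5,1)\in\Delta_{4,2}\setminus\mH=4\mA$, you get $\mE_\bfy=\{4\bfeps_0,4\bfeps_1,4\bfeps_2\}$. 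Each single removal of $4\bfeps_1'$ or $4\bfeps_2'$ from $\pi(\bfy)$ stays in $\la\mA\ra$, but $\pi(\bfy)-4\bfeps_1'-4\bfeps_2'=(1,1)\in\mH$, so $\{4\bfeps_1,4\bfeps_2\}\notin T_\bfy$. Replacing $10$ by any $y_0\equiv 2 \pmod 4$ with $y_0\ge 10$ gives the same missing edge for every $\ell\ge 3$. So ``losing one more vertex'' cannot rescue a full-skeleton argument: here $T_\bfy$ is acyclic because it is a cone, not because it contains a full skeleton. Separately, your worry in the nonemptiness step is unfounded: with $\ell\ge 2$ you have $|\bfz|>(s-2)D\ge\sigma D$, so $\bfz\notin\mH$. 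The $+1$ is really forced by Case~1.

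What is missing is a separate argument for the case $D\bfeps_0\in\mE_\bfy$. This is equivalent to $|\pi(\bfy)|\le (s-1)D$, and it is the only case in which $|\pi(\mathbf w)|\le\sigma D$ can occur. The paper compares $T_\bfy$ with $T_{\bfy+mD\bfeps_0}$ for $m\in\N$. For $r\le\min(\ell-1,d+1)$, the point $\bfy-\sum_k D\bfeps_{j_k}$ (or its translate by $mD\bfeps_0$) lies in $\mS_\uA$ if and only if its projection lies in $\la\mA\ra$, because that projection has norm at most $(s-r)D$ and $s-r\ge\sigma+1$. The projection does not depend on $m$, so the two complexes have the same faces up to that size, hence the same $\widetilde H_i$ for $i\le \ell-3$. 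A nonzero $\widetilde H_i(T_\bfy)$ would then contribute $s+m-(i+1)$ to \eqref{eq:CMreg_Briales} for every $m$, contradicting finiteness of the regularity.

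A direct alternative uses the same computation. If $F\in T_\bfy$ with $D\bfeps_0\notin F$ and $|F|\le\ell-1$, then $\pi(\bfy-\sum_{\bfe\in F}\bfe-D\bfeps_0)=\pi(\bfy-\sum_{\bfe\in F}\bfe)\in\la\mA\ra$ has norm at most $(s-|F|-1)D$ with $s-|F|-1\ge\sigma$. Hence $F\cup\{D\bfeps_0\}\in T_\bfy$, so $D\bfeps_0$ is a cone point in low dimensions, and coning gives $\widetilde H_i(T_\bfy)=0$ for $i\le \ell-2$. Either argument replaces your key claim in this case.
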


\begin{lemma}\label{lem:reg_sigma_1psing}
With the hypotheses of Theorem~\ref{thm:reg_sigma_1psing}, let $\bfy \in \mS_\uA$ such that $|\bfy| = (\sigma+1+\ell)D$ for some $\ell \geq 1$. Then,
\[\widetilde{H}_i(T_\bfy) = 0 \text{, for all } -1\leq i \leq \min(\ell-2,d) \, .\]
\end{lemma}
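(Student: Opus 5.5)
The proof follows Lemma~\ref{lem:reg_sigma_smooth} closely; only the step that certifies membership in $\mS_\uA$ changes. Let $\pi:\N^{d+1}\to\N^d$ be the projection forgetting the $0$-th coordinate, and put $s=\sigma+1+\ell$. The dictionary is \eqref{eq:relationsumsets}: a vector $\bfw\in\N^{d+1}$ with $|\bfw|=tD$ lies in $\mS_\uA$ \iff{} $\pi(\bfw)\in t\mA$. For $t\geq\sigma$ this is equivalent to $\pi(\bfw)\in\Delta_{t,e}\setminus\mH$. Divisibility by $e$ is automatic: every $\ubfa_i$, and hence every element of $\mS_\uA$, has first coordinate divisible by $e$, and $D\bfeps_j$ preserves this.

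\emph{Step 1: $\mE_\bfy\neq\emptyset$.} Write $\mE_\bfy=\{D\bfeps_j : \bfy-D\bfeps_j\in\mS_\uA\}$ for the vertex set of $T_\bfy$. Since $s-1\geq\sigma$, Definition~\ref{def:sumsets_reg_1psing} gives $(s-1)\mA=\Delta_{s-1,e}\setminus\mH$, $s\mA=\Delta_{s,e}\setminus\mH$, and $\Delta_{s-1,e}+\{\mathbf 0,D\bfeps'_1,\dots,D\bfeps'_d\}=\Delta_{s,e}$. Consider $\pi(\bfy)\in s\mA$. If $|\pi(\bfy)|\leq(s-1)D$, then $\pi(\bfy)\in(s-1)\mA$ (it is not in $\mH$), so $D\bfeps_0\in\mE_\bfy$. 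Otherwise $|\pi(\bfy)|>(s-1)D\geq\sigma D$. As in the proof of Lemma~\ref{lem:dependedimcone}, some $y_j\geq D$, so $\pi(\bfy)-D\bfeps'_j\in\Delta_{s-1,e}$. Its size exceeds $(s-2)D\geq \sigma D$, so it is not in $\mH\subset\Delta_{\sigma,e}$, and therefore $D\bfeps_j\in\mE_\bfy$.

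\emph{Step 2: every $r$-subset $\{D\bfeps_{j_1},\dots,D\bfeps_{j_r}\}$ of $\mE_\bfy$ with $r\leq\min(\ell,d+1)$ is a face.} Put $\bfw=\bfy-\sum_k D\bfeps_{j_k}$. Each $y_{j_k}\geq D$, so $\bfw\in\N^{d+1}$, and $|\bfw|=(\sigma+1+\ell-r)D$ with $\sigma+1+\ell-r\geq\sigma+1$. Hence $\bfw\in\mS_\uA$ \iff{} $\pi(\bfw)\notin\mH$. This is the main obstacle: unlike the smooth case, $\mH$ may be nonempty, and $\pi(\bfw)$ could a priori fall into $\mH$ when $w_0$ is large. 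The extra $+1$ in the hypothesis is what should rule this out. Suppose $\pi(\bfw)\in\mH$. Then $|\pi(\bfw)|\leq\sigma D<|\bfw|$, so $w_0\geq D$. I will use two facts. First, $\mH$ is closed under subtracting $D\bfeps'_j$ whenever the result stays in $\N^d$, since $D\bfeps'_j\in\mA$ and $\mH=\N^d_e\setminus\la\mA\ra$. Second, $|\pi(\bfw)|\leq\sigma D$ while $\bfw$ has degree at least $\sigma+1$. The aim is to swap: compare $\bfw$ with $\bfy-D\bfeps_{j_1}\in\mS_\uA$, of degree $s-1\geq\sigma+1$, whose projection equals $\pi(\bfw)$ plus the $D\bfeps'_{j_k}$ with $k\geq 2$ and $j_k\neq0$. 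Then I would try to derive a contradiction. The plan is to peel those summands off one at a time, at each stage staying in degree $\geq\sigma+1$ (this is exactly where $\sigma+1$ rather than $\sigma$ is needed). At each stage one either lands in $\Delta_{t,e}\setminus\mH$ with $t\geq\sigma$, or $\mH$ is hit at a point whose size is forced above $\sigma D$, contradicting $\mH\subset\Delta_{\sigma,e}$. If a direct argument fails, the fallback is to replace the offending vertex by $D\bfeps_0$, using $w_0\geq D$. That would show the subcomplex generated by small faces is still a cone or a full skeleton, which suffices for the homology vanishing.

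\emph{Step 3: homology.} This is verbatim as in Lemma~\ref{lem:reg_sigma_smooth}. If $|\mE_\bfy|\leq\ell$ or $\ell\geq d+2$, then $T_\bfy$ is a full simplex, hence acyclic. Otherwise $T_\bfy$ contains the full $(\ell-1)$-skeleton of the simplex on the nonempty set $\mE_\bfy$. Therefore $\widetilde H_i(T_\bfy)=0$ for $-1\leq i\leq\ell-2$, because adding faces of dimension $\geq\ell$ does not affect homology in degrees $<\ell-1$.
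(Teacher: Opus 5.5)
Your Step~1 is correct and matches the paper. The gap is in Step~2. The claim that every subset of $\mE_\bfy$ of size $r\leq\min(\ell,d+1)$ is a face of $T_\bfy$ is false in general, so the peeling argument cannot produce a contradiction.

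Here is a counterexample. Take $\mA$ from Example~\ref{ex:1sing}, so $D=4$, $e=2$, $\mH=\{(1,1)\}$, and $\sigma=2$ by Example~\ref{ex:1sing2}. Take $\ell=2$ and $\bfy=(10,5,5)\in\mS_\uA$, so that $|\bfy|=20=(\sigma+1+\ell)D$.
\begin{itemize}
\item The projections $(5,5)$, $(1,5)$, $(5,1)$ of $\bfy-4\bfeps_0$, $\bfy-4\bfeps_1$, $\bfy-4\bfeps_2$ lie in $4\mA$, so $\mE_\bfy=\{D\bfeps_0,D\bfeps_1,D\bfeps_2\}$.
\item $\{D\bfeps_0,D\bfeps_1\}$ and $\{D\bfeps_0,D\bfeps_2\}$ are faces, since $(1,5),(5,1)\in 3\mA$.
\item But $\bfy-4\bfeps_1-4\bfeps_2=(10,1,1)\notin\mS_\uA$, because $(1,1)\in\mH$.
\end{itemize}
So $T_\bfy$ is a path. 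The lemma's conclusion holds here, but $T_\bfy$ does not contain the $1$-skeleton of the simplex on $\mE_\bfy$, and your Step~3 has nothing to stand on. Your ``first fact'' only says that $\la\mA\ra$ is stable under adding $D\bfeps'_j$. Knowing that each $\pi(\bfy)-D\bfeps'_{j_k}$ lies in $\la\mA\ra$ says nothing about $\pi(\bfy)-\sum_k D\bfeps'_{j_k}$. The extra $+1$ in the degree does not change this. The bad situation arises exactly when $|\pi(\bfy)|$ is small, i.e.\ when $D\bfeps_0\in\mE_\bfy$.

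The missing idea is the case split the paper makes. Write $\mathbf{w}=\bfy-\sum_k D\bfeps_{j_k}$ as in your proposal.
\begin{itemize}
\item If $D\bfeps_0\notin\mE_\bfy$, then $\pi(\bfy)\in s\mA\setminus(s-1)\mA$, so $|\pi(\bfy)|>(s-1)D$ by Remark~\ref{rem:defsigma}~\ref{rem:defsigma_2}. All $j_k\geq1$, so $|\pi(\mathbf{w})|>(s-1-r)D\geq\sigma D$ and $\pi(\mathbf{w})\notin\mH$. In this case your Steps~2--3 go through verbatim.
\item If $D\bfeps_0\in\mE_\bfy$, then $|\pi(\bfy)|\leq(s-1)D$ and the full-skeleton claim can fail, as above, so a different argument is needed.
\end{itemize}
In the second case, the paper shows that $T_\bfy$ and $T_{\bfy+mD\bfeps_0}$ have the same faces of size at most $\min(\ell,d+1)$ for every $m\in\N$. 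A nonzero $\widetilde H_i(T_\bfy)$ in the relevant range would then make \eqref{eq:CMreg_Briales} give infinite regularity.

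Your fallback can instead be made into a direct proof. Let $F\not\ni D\bfeps_0$ be a face with $|F|=r\leq\ell$. Then $\pi(\bfy-\sum_{\bfe\in F}\bfe)\in\la\mA\ra$ has size $|\pi(\bfy)|-rD\leq(s-1-r)D$, with $s-1-r\geq\sigma$. Also $y_0\geq D$. So by \eqref{eq:relationsumsets}, $F\cup\{D\bfeps_0\}$ is a face. Hence the cone with apex $D\bfeps_0$ over the $(\ell-1)$-skeleton of $T_\bfy$ is a subcomplex of $T_\bfy$. The inclusion of that skeleton into $T_\bfy$ is an isomorphism on $\widetilde H_i$ for $i\leq\ell-2$ and factors through the acyclic cone, so these groups vanish. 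As written, though, this is only a hedge, and the main line of your Step~2 argues for a false statement.
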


\begin{proof}
Let $\pi: \N^{d+1} \rightarrow \N^d$ be the projection given by $\pi(z_0,z_1,\ldots,z_d) = (z_1,\ldots,z_d)$, and consider $\pi(\bfy) \in (\ell+\sigma+1)\mA$. Set $s =|\bfy|/D = \ell+\sigma+1$. 
Consider the simplicial complex $T_\bfy$ and let $\mathcal{E}_\bfy$ be the vertex set of $T_\bfy$.

Let us prove that $\mE_\bfy \neq \emptyset$.  If $\pi(\bfy) \in (s-1) \mA$, then $\bfy - D \bfeps_0 \in \mS_\uA$ and $D \bfeps_0 \in \mE_\bfy$. If $\pi(\bfy) \notin (s-1) \mA$, we deduce that $|\pi(\bfy)| > (s-1)D$ because $(s-1) \mA = \Delta_{s-1,e} \setminus \mH$ and $\pi(\bfy) \notin \mH$. As $\Delta_{s-1,e} + \{\mathbf{0}, D\bfeps_1',\ldots,D\bfeps_d'\} = \Delta_{s,e}$, then $\bfz + D \bfeps_i' = \pi(\bfy)$ for some $\bfz \in \Delta_{s-1,e}$ and $i \in \{1,\ldots,d\}$. Since $|\bfz| = |\pi(\bfy)| - D > (s-2) D$, then $\bfz \notin \Delta_{s-2,e}$. Using that $s\geq \sigma+2$,  it follows that  $\bfz \notin \mH$ and then, $\bfz \in (s-1)\mA$. Thus, we conclude  that $ D \bfeps_i \in \mE_\bfy$.

\noindent\underline{Case 1:} Assume $D\bfeps_0 \notin \mE_\bfy$, i.e., $\bfy-D\bfeps_0 \notin \mS_\uA$. Then, $\pi(\bfy) \in s\mA \setminus (s-1)\mA$. Since $s=\ell+\sigma+1 \geq \sigma+1$, then $\pi(\bfy) \in \Delta_{s,e} \setminus \Delta_{s-1,e}$, by Remark~\ref{rem:defsigma}~\ref{rem:defsigma_2}, and hence, $|\pi(\bfy)| > (s-1)D$.

{\it Claim:} Every subset of size at most $\min(\ell,d+1)$ of $\mathcal{E}_\bfy$ is a face of $T_\bfy$.

{\it Proof of the claim:} Take $D\bfeps_{j_1},\ldots,D\bfeps_{j_r} \in \mathcal{E}_\bfy$ with $r \leq \min(\ell,d+1)$. For all $1\leq k \leq r$, since $D\bfeps_{j_k} \in \mathcal{E}_\bfy$, then $y_{j_k} \geq D$. Hence, $\bfy - \sum_{k=1}^r D \bfeps_{j_k} \in \N^{d+1}$, and its first coordinate is a multiple of $e$. We observe that 
\[(s-r)D \geq |\pi ( \bfy- \sum_{k=1}^r D\bfeps_{j_k} )| = |\pi(\bfy)|-rD > (s-1-r)D, \]
and since $s-1-r =  \sigma+\ell-r\geq \sigma$, by Remark~\ref{rem:defsigma}~\ref{rem:defsigma_2},
it follows that $\pi(\bfy-\sum_{k=1}^r D\bfeps_{j_k}) \in (s-r)\mA$. Therefore, $\bfy-\sum_{k=1}^r D\bfeps_{j_k} \in \mS_\uA$, and $\{D\bfeps_{j_1},\ldots,D\bfeps_{j_r}\}$ is a face of $T_\bfy$. 

From the Claim, it follows that $\widetilde{H}_i(T_\bfy) = 0$ for $-1\leq i \leq \min(\ell-2,d)$, as in the proof of Lemma~\ref{lem:reg_sigma_smooth}.

\noindent\underline{Case 2:} Assume $D\bfeps_0 \in \mE_\bfy$, i.e., $\bfy-D\bfeps_0 \in \mS_\uA$. Then, $\pi(\bfy) \in (s-1)\mA$, so $|\pi(\bfy)| \leq (s-1)D$. For all $m\in \N$, consider $\bfz_m = \bfy+m (D\bfeps_0)$. 

{\it Claim:} The faces of $T_\bfy$ and $T_{\bfz_m}$ coincide up to cardinality $\min(\ell,d+1)$.

{\it Proof of the claim:} We have to prove the following 
\[\forall \, 1\leq r \leq \min(\ell,d+1) , \quad \bfy-\sum_{k=1}^r D\bfeps_{j_k} \in \mS_\uA \Longleftrightarrow \bfy+m(D\bfeps_0)-\sum_{k=1}^r D\bfeps_{j_k} \in \mS_\uA \,.\]
Being $(\Rightarrow)$ straightforward, let us prove $(\Leftarrow)$. Set $\bfy' = \bfy-\sum_{k=1}^r D\bfeps_{j_k}$.
Suppose that $\bfy'+mD\bfeps_0 \in \mS_\uA$. Then, $\pi(\bfy'+mD\bfeps_0)= \pi(\bfy') \in (s+m-r)\mA$. To prove that $\bfy' \in \mS_\uA$, let us show that $\pi(\bfy') \in (s-r)\mA$. We have that $\pi(\bfy') \in \la \mA\ra$, $|\pi(\bfy')| = |\pi(\bfy)|-rD \leq (s-r)D$ and $s-r\geq \ell+\sigma+1-\ell = \sigma+1$, and hence $\pi(\bfy') \in (s-r)\mA$, by the definition of $\sigma$. Therefore, we have proved that $\bfy-\sum_{k=1}^r D\bfeps_{j_k} \in \mS_\uA$.

From the claim, it follows that $\widetilde{H}_i(T_\bfy) = \widetilde{H}_i(T_{\bfz_m})$ for all $-1\leq i \leq \min(\ell-2,d)$. If there exists $i$, $-1\leq i \leq \min(\ell-2,d)$ such that $\widetilde{H}_i (T_\bfy) \neq 0$, then by \eqref{eq:CMreg_Briales} one has that 
\[\reg{\k[\mX_\uA]} \geq \max_{m \in \N} \{s+m-(i+1) \mid \widetilde{H}_i(T_\bfy) \neq 0\} = \infty ,\]
which is absurd. Then, one has that $\widetilde{H}_i(T_\bfy) = 0$ for all $-1\leq i \leq \min(\ell-2,d)$.
\end{proof}

\begin{proof}[Proof of Theorem~\ref{thm:reg_sigma_1psing}]
Set $\sigma = \sigma(\mA)$, and let us prove that $\reg{\k[\mX_\uA]} \leq \sigma+1$. Take an element $\bfy \in \mS_\uA$.
\begin{itemize}
    \item If $|\bfy| \leq (\sigma+1) D$, then $\frac{|\bfy|}{D} -(i+1) \leq \sigma+1$ for all $i\geq -1$.
    \item If $|\bfy| = (\sigma+1+\ell)D$ for some $1\leq \ell \leq d+1$, then $\widetilde{H}_i(T_\bfy) = 0$ for all $-1\leq i \leq \ell-2$, by Lemma~\ref{lem:reg_sigma_smooth}, and note that $\frac{\bfy}{D}-(i+1) \leq \sigma+1$ for all $i > \ell-2$.
    \item If $|\bfy| \geq (\sigma+d+3)D$, then $\widetilde{H}_i(T_\bfy) = 0$ for all $-1\leq i \leq d$, by Lemma~\ref{lem:reg_sigma_smooth}.
\end{itemize}
Therefore, by \eqref{eq:CMreg_Briales}, it follows that $\reg{\k[\mX_\uA]} \leq \sigma+1$.
\end{proof}

\begin{corollary} \label{cor:reg_1sing}
If $\mX$ is a  simplicial projective toric variety with exactly one singular point, then 
\[ \reg{\k[\mX]} \leq \begin{cases} \frac{D}{e} \left[ (d-1)(D-2) +\frac{D}{e}-2 \right] + 1 & \text{if } D \geq 3 ,\\ \lceil \frac{d-1}{2} \rceil & \text{if } D = 2. \end{cases}\]
\end{corollary}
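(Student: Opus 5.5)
This corollary follows by combining Theorem~\ref{thm:reg_sigma_1psing} with the bounds on $\sigma(\mA)$ from Section~\ref{sec2}. The plan has three steps: normalize $\mX$, bound $\sigma(\mA)$, and handle the degenerate case $e=D$ separately.

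\textbf{Normalization.} Let $\mX$ be a simplicial projective toric variety with exactly one singular point. By Proposition~\ref{prop:charact_1singularpt}\ref{prop:charact_1singularpt_1}, after reordering coordinates that point is $P_0$. By part~\ref{prop:charact_1singularpt_2}, $\mX=\mX_\mB$ with $\mB$ containing
\[
\{(D-1)\bfeps_i+\bfeps_j \mid 1\le i,j\le d\}\cup\{e\bfeps_0+(D-e)\bfeps_j \mid 1\le j\le d\},
\]
where $e\mid D$ and $e$ divides every $b_{i0}$. Dehomogenizing by deleting the $0$-th coordinate gives a set $\mA\subset\N^d$ with $\uA=\mB$. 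Condition \eqref{eq:A_sup1singp} holds for $\mA$, and $e$ divides every $|\bfa_k|=D-b_{k0}$. Hence Theorem~\ref{thm:strThmCharacts1ptosing} places us in the setting of Theorem~\ref{thm:reg_sigma_1psing}, which gives
\[
\reg{\k[\mX]}\le\sigma(\mA)+1 .
\]

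\textbf{Case $D\ge 3$.} Theorem~\ref{thm:sigma_1psing_general} gives
\[
\sigma(\mA)\le \frac{D}{e}\left[(d-1)(D-2)+\frac{D}{e}-2\right].
\]
Adding $1$ yields the first line of the bound.

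\textbf{Case $D=2$.} Here $e=2$, and we observed that $\mH=\emptyset$ and $s\mA=\Delta_{s,2}$ for every $s$. Lemma~\ref{lem:dependedimcone} then gives $\sigma(\mA)=\lceil\frac{d-1}{2}\rceil$. Theorem~\ref{thm:reg_sigma_1psing} alone would only give this value plus $1$, so the extra $1$ must be removed. Since $e=D$, Remark~\ref{rem:sup1psing}\ref{rem:sup1psing_ii} applies: $\mX$ is a cone over a smooth simplicial toric variety $\mX_{\mB'}$, and the two coordinate rings have the same regularity. Here $\mB'$ is the set of all degree-$2$ monomials in $d$ variables, i.e.\ a Veronese. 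That variety has dimension $d-1$ and $D=2$, so it is projectively normal and in particular Cohen--Macaulay. Its regularity can be read off either from Example~\ref{ej:sumsetreg_Veronese} combined with Theorem~\ref{thm:reg_sigma_smooth} (giving $(d-1)-\lfloor\frac{d-1}{2}\rfloor$), or from the Cohen--Macaulay $a$-invariant. I will compute it carefully, since the stated value $\lceil\frac{d-1}{2}\rceil$ must come out of the dimension shift from $d$ to $d-1$.

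\textbf{Main obstacle.} The only delicate point is this $D=2$ case: showing the bound is $\lceil\frac{d-1}{2}\rceil$ rather than $\lceil\frac{d-1}{2}\rceil+1$. If passing through the cone structure gives an off-by-one mismatch, I will fall back on a direct argument. In that case every $\bfy\in\mS_\uA$ with $|\bfy|=sD$ has even first coordinate. Then $T_\bfy$ should be a cone with apex $D\bfeps_0$ whenever $y_0\ge 2$, and for $y_0=0$ the argument reduces to the smooth computation in one dimension lower.
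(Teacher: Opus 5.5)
Your proposal is correct and follows the paper's route. For $D\ge 3$ you combine Theorems~\ref{thm:reg_sigma_1psing} and~\ref{thm:sigma_1psing_general}. For $D=2$ you use the cone reduction of Remark~\ref{rem:sup1psing}\ref{rem:sup1psing_ii} to the smooth quadratic Veronese of dimension $d-1$; the paper quotes Corollary~\ref{cor:reg_smooth} here, whose $D=2$ case is exactly Example~\ref{ej:sumsetreg_Veronese} combined with Theorem~\ref{thm:reg_sigma_smooth}. The off-by-one you worry about does not occur, because $(d-1)-\lfloor\frac{d-1}{2}\rfloor=\lceil\frac{d-1}{2}\rceil$, so your fallback argument is unnecessary.
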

\begin{proof}
When $D \geq 3$, the result follows directly from Theorems \ref{thm:reg_sigma_1psing} and \ref{thm:sigma_1psing_general}. 
For $D = 2$, we have that $e = 2$ and $\mX = \mX_{\uA}$, where
 $\uA = \{2 \bfeps_0\} \cup \{\bfeps_i+\bfeps_j \mid 1\leq i \leq j\leq d\} \subset \N^{d+1}$.
If we set $\underline{\mB} := \{\bfeps_i'+\bfeps_j' \mid 1\leq i \leq j\leq d\} \subset \N^{d}$, we have that $I_\uA = I_{\underline{\mB}} .\k[x_0,\ldots,x_n]$ and, thus, $\reg{\k[\mX_{\uA}]} = \reg{\k[\mX_{\underline{\mB}}]}$, as stated in Remark~\ref{rem:sup1psing}~\ref{rem:sup1psing_ii}. Moreover,  $X_{\underline{\mB}} \subset \P_\k^{n-1}$ is a smooth simplicial projective toric variety and, by Corollary \ref{cor:reg_smooth}, $\reg{\k[\mX_{\underline{\mB}}]} \leq \lceil \frac{d-1}{2}\rceil$.
\end{proof}

\begin{example}\label{ex:1sing3} If we consider the set $\mA$ of Examples \ref{ex:1sing} and \ref{ex:1sing2}, by Corollary \ref{cor:reg_1sing}, we have that $\reg{\k[\mX_\uA]} \leq 5$. Nevertheless, since we know that $\sigma(\mA) = 2$, then Theorem \ref{thm:reg_sigma_1psing} gives $\reg{\k[\mX_\uA]} \leq 3$.  We also observe that, 
\[ \uA = \{\bfa_0, \bfa_1, \bfa_2, \bfa_3,  \bfa_4, \bfa_5, \bfa_6\} \subset \N^3, \]
with $\bfa_i = 4 \bfeps_i$ for $i = 0,1,2,\, \bfa_3 = (0,3,1), \bfa_4 = (0,1,3), \bfa_5 = (2,2,0)$ and $\bfa_6 = (2,0,2)$. 
In other words, \[ \uA = \{(y_0,y_1,y_2) \, \vert \, y_0 + y_1 +\ y_2 = 4,\, y_0 \text{ is even, and }  (y_1,y_2) \notin \{(1,1), (2,2)\} \}.\] 
Since $s \mA = \Delta_{s,2} \setminus \{(1,1)\}$ for all $s \geq 2$,  then 
\[s \uA = \{(y_0,y_1,y_2) \, \vert \, y_0 + y_1 +\ y_2 = 4s, y_0 \text{ is even, and } \, (y_1,y_2) \neq (1,1)\}.\]
If one considers $\bfy = (4,2,2)$, then one has that $\bfy = \bfa_5 + \bfa_6 \in \mS_\uA$, but $\bfy - D \bfeps_1,  \bfy - D \bfeps_2 \notin \N^3$ and $\bfy - D \bfeps_0 = (0,2,2) \notin \uA$. Hence, the simplicial complex $T_{\bfy}$ is empty, which implies that $\dim{\tilde{H}_{-1} (T_\bfy)} \neq 0$ and, by  \eqref{eq:CMreg_Briales}, $\reg{\k[\mX_\uA]} \geq 2$.

Indeed, a direct computation with the Sage package {\tt Shortres} \cite{github_shortres} confirms that $\reg{\k[\mX_\uA]} = 2$.
\end{example}

We include another example with $\reg{\k[\mX_\uA]} = \sigma(\mA) + 1$.

\begin{example}\label{ex:1sing2_2} Consider the set \[ \mA = \{(y_1,y_2) \in \N^2 \, \vert \, y_1 + y_2 \text{ is even},\, y_1 + y_2 \leq 6\} \setminus \{(2,4), (3,3)\} \subset \N^2; \]
this set satisfies the conditions in \eqref{eq:A_sup1singp} with $D = 6, d = e = 2$.
We observe (see Figure \ref{fig:ex1sing2}) that $\mA = \Delta_{1,2} \setminus \{(2,4), (3,3)\},\, 2 \mA =  \Delta_{2,2} \setminus \{(3,9)\}$, and
$s \mA = \Delta_{s,2}$ for all $s \geq 3$. Therefore, $\mH = \emptyset$. 

\begin{figure}
\includegraphics[scale = .9]{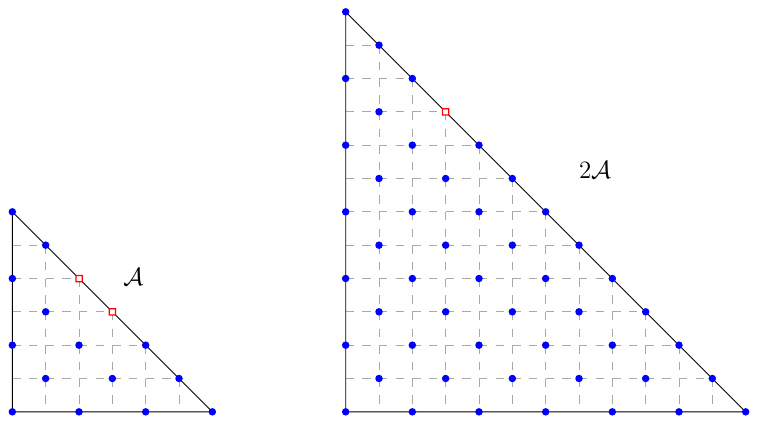}
\caption{For $\mA$ as in Example \ref{ex:1sing2_2}, filled circles represent elements in $\mA$ and $2 \mA$, respectively; while empty squares correspond to elements in $\Delta_{1,2} \setminus \mA$ and $\Delta_{2,2} \setminus 2\mA$, respectively.} \label{fig:ex1sing2}
\end{figure}

Moreover, by Lemma~\ref{lem:dependedimcone}, $\Delta_{s,2} +\{(0,0),(6,0),(0,6)\} = \Delta_{s+1,2}$ \iff{} $s\geq 2 - \frac{1}{2}$. Hence, $\sigma(\mA) = {\rm max}(2,\lceil 2 - \frac{1}{2}\rceil) = 2$ and, by Theorem \ref{thm:reg_sigma_1psing},
$\reg{\k[\mX_\uA]} \leq 3$. 

Consider now $\bfy = (6,9,15) \in \N^3$. We have that:
\begin{itemize} 
\item $\bfy - 6 (\bfeps_0 + \bfeps_1) = (0,3,15) \in \mS_\uA$ because $(3,15) \in 3 \mA$,
\item $\bfy - 6 (\bfeps_0 + \bfeps_2) = (0,9,9)\in \mS_\uA$ because $(9,9) \in 3 \mA$,
\item $\bfy - 6 (\bfeps_1 + \bfeps_2) = (6,3,9)\in \mS_\uA$ because $(3,9) \in 3 \mA$, and
\item $\bfy - 6 (\bfeps_0 + \bfeps_1 + \bfeps_2) = (0,3,9) \notin \mS_\uA$ because $(3,9) \notin 2\mA$.
\end{itemize} 
 Hence the simplicial complex $T_\bfy$ is a hollow triangle. Thus, ${\rm dim}(\widetilde{H}_1(T_\bfy)) = 1$ and, by \eqref{eq:CMreg_Briales}, we have that $\reg{\k[\mX_\uA]} \geq \frac{6+9+15}{6} - 2 = 3.$
\end{example}

\subsection{The Eisenbud-Goto bound}

\begin{theorem} \label{thm:EG_surfaces_1psing}
Let $\mX \subset \Pn{n}_\k$ be a simplicial projective toric variety with exactly one singular point. If the dimension of $\mX$ is $d \geq 3$ (i.e., $\dim(\k[\mX]) = d+1 \geq 4$), then the Eisenbud-Goto bound holds for $\k[\mX]$, i.e.,
\[ \reg{\k[\mX]} \leq \deg(\mX) - n+d \, . \]
\end{theorem}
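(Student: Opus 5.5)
We reduce to the normal form of Proposition~\ref{prop:charact_1singularpt}: after reordering, $\mX = \mX_\uA$ with $\mA$ as in \eqref{eq:A_sup1singp}, with divisor $e$ of $D$ and the singular point at $P_0$. We need two ingredients: an upper bound on $\reg{\k[\mX_\uA]}$, supplied by Corollary~\ref{cor:reg_1sing}, and a lower bound on $\deg(\mX_\uA) - n + d$ obtained from the structure of $\mA$.

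\emph{The degree.} The multiplicity of $\k[\mS_\uA]$ is the normalized volume of the lattice polytope $\mathrm{conv}(\mA)$ relative to the lattice generated by $\mA - \mA$; equivalently, it is the leading coefficient of $|s\mA|$ times $d!$. By Proposition~\ref{prop:structure_sumsets}, $|s\mA| = |\Delta_{s,e}| - |\mH|$ for $s\gg0$. Since $(D-1)\bfeps_i'+\bfeps_j'\in\mA$, the group $\Z\mA$ is $\{\bfy\in\Z^d : e \mid |\bfy|\}$, which has index $e$ in $\Z^d$. Hence $|\Delta_{s,e}| \sim (sD)^d/(e\, d!)$ and
\[ \deg(\mX_\uA) = \frac{D^d}{e} .\]
We also need an upper bound on $n$. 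All points of $\mA$ lie in $\Delta_{1,e}$, so $n+1 \le |\Delta_{1,e}|$. A cruder bound is more convenient: every $\bfa\in\mA$ satisfies $|\bfa| \in \{0,e,2e,\ldots,D\}$, so $n+1 \le \sum_{k=0}^{D/e}\binom{ke+d-1}{d-1}$. The case $e=D$ is the cone over a smooth variety (Remark~\ref{rem:sup1psing}), and there the statement follows from Corollary~\ref{cor:reg_smooth} together with Herzog--Hibi's Eisenbud--Goto bound; degree and codimension are unchanged under taking the cone. We may therefore assume $e \le D/2$, and also $D\ge3$, since for $D=2$ we have $e=D$.

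\emph{The comparison.} It remains to show
\[ \frac{D}{e}\Big[(d-1)(D-2)+\frac{D}{e}-2\Big]+1 \;\le\; \frac{D^d}{e} - n + d . \]
Write $m=D/e\ge2$. Using $n+1\le|\Delta_{1,e}|$, the right-hand side is at least $D^d/e - |\Delta_{1,e}| + d + 1$. The key estimate is that $|\Delta_{1,e}|$ is roughly $D^d/(e\,d!)$ plus lower-order terms of size $O(D^{d-1})$. So for $d\ge3$ the right-hand side grows like $(1-1/d!)D^d/e \ge \tfrac56 D^3/e$, while the left-hand side is $O(dD^2/e + m^2)$. Since $m^2 = D^2/e^2 \le D^2/e$, both sides scale with $1/e$, and the inequality $\tfrac56 D^3 \gtrsim (d+1)D^2$ is comfortable for $D\ge3$. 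A dimension count shows why $d\ge3$ is needed: for $d=2$, $D^2/e$ versus $D^2/e$ leaves no margin.

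\emph{Main obstacle.} The difficulty is making the count of $|\Delta_{1,e}|$ (or of $\mA$) sharp enough to handle the small cases, such as $D=3,4$ with $e\ge2$ or $d=3$, where the asymptotic argument gives no room. To get there, I would first bound $n+1 \le \sum_{k=0}^{m}\binom{ke+d-1}{d-1}$ and compare it with $D^d/e$ by induction on $d$, starting from $d=3$. In the base case I would check the polynomial inequality in $D$ and $e$ directly, expanding and treating $e\mid D$ with $m\ge2$. The induction step from $d$ to $d+1$ should follow because the left-hand side increases by about $m(D-2)$, while the right-hand side gains about $(D-1)D^d/e$ minus the increase in $n$. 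If the resulting inequality is tight in some small case, I would instead use the sharper bound $\reg{} \le \sigma(\mA)+1$ of Theorem~\ref{thm:reg_sigma_1psing} in that case.
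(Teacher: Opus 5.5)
Your architecture matches the paper's: the normal form with the singular point at $P_0$, the bound of Corollary~\ref{cor:reg_1sing}, $\deg(\mX_\uA)=D^d/e$, a bound on $n+1=|\mA|$, and the case $e=D$ via the cone. Your appeal to Herzog--Hibi for the smooth $(d-1)$-dimensional base is equivalent to the paper's use of Lemma~\ref{lem:EG_e=D}, and it is valid since $d-1\ge 2$. For $D\ge3$, $e<D$ you also follow the paper's plan of an explicit inequality proved by induction on $d$ from $d=3$. Your degree computation through the leading coefficient of $|s\mA|=|\Delta_{s,e}|-|\mH|$ is a fine substitute for Proposition~\ref{prop:deg_1psing}.

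The gap is that the step carrying all the content when $D\ge3$, $e<D$ is never proved, namely
\[
\frac De\Bigl[(d-1)(D-2)+\frac De-2\Bigr]+1\le\frac{D^d}{e}-n+d\qquad\text{for all } d,D\ge3,\ e\mid D,\ e<D.
\]
You only describe how you would attempt it, and the heuristic you give in its place does not hold up:
\begin{itemize}
\item The top layer $\{|\bfy|=D\}$ contributes $\binom{D+d-1}{d-1}$ to $|\Delta_{1,e}|$ independently of $e$. So the two sides do not both scale like $1/e$, and $|\Delta_{1,e}|$ is not $D^d/(e\,d!)$ plus a negligible error. For $e=D/2$, $D^d/e=2D^{d-1}$ is of the same order as that layer; for small $D$ the ``lower-order'' terms dominate.
\item Taken literally, your margin $\frac56D^3\gtrsim(d+1)D^2$ fails at $d=D=3$ ($22.5<36$). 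There, with $e=1$, the left side equals $10$, while $n+1\le|\Delta_{1,1}|=20$ only gives $11$ on the right. The inequality holds by one unit, so it has to be verified, not estimated.
\item Your fallback is not available. There is no general bound on $\sigma(\mA)$ better than Theorem~\ref{thm:sigma_1psing_general}, which is exactly what Corollary~\ref{cor:reg_1sing} already encodes.
\end{itemize}

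Two things are missing. First, you need a closed-form count that keeps the top layer separate, as in Lemma~\ref{lem:bound_sizeA}. Since $e\binom{ie+d-1}{d-1}\le\#\{\bfy\in\N^d: ie\le|\bfy|<(i+1)e\}$ for $0\le i<D/e$, one gets
\[
|\mA|\le\frac1e\binom{D+d}{d}+\Bigl(1-\frac1e\Bigr)\binom{D+d-1}{d-1}=\frac{D/e+d}{D+d}\binom{D+d}{d}.
\]
Second, you need a complete proof of
\[
\frac De\Bigl[(d-1)(D-2)+\frac De-2\Bigr]\le\frac{D^d}{e}-\frac{D/e+d}{D+d}\binom{D+d}{d}+d,
\]
which is Lemma~\ref{lem:EG_e_neq_D}. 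For $d=3$ it is equivalent to $5eD^3-(3e^2+15e+6)D^2+(34e-9e^2)D+12e^2\ge0$, checked separately for $e=1$, $2\le e<D/2$ and $e=D/2$. The step $d\to d+1$ compares the increments of the two sides using $\binom{D+d+1}{d+1}=\bigl(1+\frac{D}{d+1}\bigr)\binom{D+d}{d}$ and $e\le D/2$. As it stands, your argument proves the theorem only when $e=D$.
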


Before proving Theorem~\ref{thm:EG_surfaces_1psing}, we compute the degree of $\mX$ in Proposition~\ref{prop:deg_1psing}, estimate the codimension of $\mX$ in Lemma~\ref{lem:bound_sizeA} and show in Lemma~\ref{lem:EG_e_neq_D} an inequality that will be needed in the proof of the theorem.

The degree of simplicial projective toric varieties can be computed using the following result.

\begin{lemma}[{\cite[Thm.~2.13, Thm.~4.5]{OCarroll2014}}] \label{lem:deg_XA}
Let $\mX_\uA \subset \Pn{n}_\k$ be a simplicial projective toric variety, where $\uA = \{\ubfa_0,\ldots,\ubfa_n\} \subset \N^{d+1}$, $|\ubfa_i| = D \in \Z_{>0}$ for all $i=0,\ldots,n$ and $\{D\bfeps_0,\ldots,D\bfeps_d\} \subset \uA$. 
The degree of $\mX_\uA$ can be computed as
\[
\deg(\mX_\uA) = \frac{(d+1)! \cdot {\rm vol} \left( {\rm conv} \left( \uA \cup \{\mathbf{0} \} \right) \right)}{\theta_{d+1}} = \dfrac{D^{d+1}}{\theta_{d+1}} \, ,
\]
where
${\rm vol} \left( {\rm conv} \left( \uA \cup \{\mathbf{0} \} \right) \right)$ denotes the volume of the convex hull of $\uA \cup \{\mathbf{0} \} \subset \R^{d+1}$, and
$\theta_{d+1}$ is the greatest common divisor of the $(d+1)\times(d+1)$ minors of the $(d+1)\times (n+1)$ matrix $M$, whose columns are the vectors $\ubfa_0,\ldots,\ubfa_n$.
\end{lemma}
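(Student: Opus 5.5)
We prove the degree formula by comparing the Hilbert function of $\k[\mX_\uA]\cong\k[\mS_\uA]$ with a lattice point count in a dilated polytope. The second equality needs no lattice input. Since $\mS_\uA$ is simplicial with extremal rays $D\bfeps_0,\ldots,D\bfeps_d$, and $|\ubfa_i|=D$ for all $i$, every $\ubfa_i$ lies in the simplex ${\rm conv}(D\bfeps_0,\ldots,D\bfeps_d)$. Hence ${\rm conv}(\uA\cup\{\mathbf 0\})$ is the simplex with vertices $\mathbf 0, D\bfeps_0,\ldots,D\bfeps_d$. Its volume is $D^{d+1}/(d+1)!$, which gives $(d+1)!\,{\rm vol}=D^{d+1}$. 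It remains to show $\deg(\mX_\uA)=D^{d+1}/\theta_{d+1}$.

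\textbf{Step 1: replace $\mS_\uA$ by its saturation.} Let $L=\Z\uA\subset\Z^{d+1}$. Since $M$ has full rank $d+1$, elementary divisor theory gives $[\Z^{d+1}:L]=\theta_{d+1}$, because the gcd of the maximal minors equals the product of the invariant factors. Let $\overline{\mS}=L\cap\R_{\geq0}^{d+1}$ be the saturation of $\mS_\uA$ in $L$. Then $\k[\overline{\mS}]$ is a finitely generated $\k[\mS_\uA]$-module of the same dimension $d+1$ and with the same fraction field. So $\k[\overline{\mS}]/\k[\mS_\uA]$ has dimension at most $d$, and the two Hilbert polynomials share their leading coefficient. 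Here both rings are graded by $|\bfy|/D$, which is an integer since every element of $L$ has coordinate sum divisible by $D$. Concretely, some $\bfc\in\mS_\uA$ (for instance a suitable multiple of $\sum_i D\bfeps_i$) satisfies $\bfc+\overline{\mS}\subset\mS_\uA$. Then
\[
|\{\bfy\in\overline{\mS}:|\bfy|=(s-c)D\}|\leq \hf{\k[\mX_\uA]}(s)\leq|\{\bfy\in\overline{\mS}:|\bfy|=sD\}|,
\]
so the two counts differ by $O(s^{d-1})$. This is the step I expect to need the most care, but it is standard for affine semigroups.

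\textbf{Step 2: count lattice points.} Set $P={\rm conv}(\mathbf 0,D\bfeps_0,\ldots,D\bfeps_d)$. Summing the Hilbert function of $\k[\overline{\mS}]$ gives
\[
\sum_{t\leq s}|\{\bfy\in\overline{\mS}:|\bfy|=tD\}|=|L\cap sP|,
\]
because every point of $L\cap sP$ lies in $\overline{\mS}$ and has $|\bfy|\in D\N$. Since $L$ is a lattice of covolume $\theta_{d+1}$, we get
\[
|L\cap sP|=\frac{{\rm vol}(P)}{\theta_{d+1}}\,s^{d+1}+O(s^{d}).
\]
The Hilbert polynomial of $\k[\mX_\uA]$ has the form $\frac{\deg}{d!}s^d+O(s^{d-1})$. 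Its partial sums therefore grow like $\frac{\deg}{(d+1)!}s^{d+1}$. Comparing leading coefficients, using Step 1 to pass between $\mS_\uA$ and $\overline{\mS}$, yields
\[
\deg(\mX_\uA)=\frac{(d+1)!\,{\rm vol}(P)}{\theta_{d+1}}=\frac{D^{d+1}}{\theta_{d+1}}.
\]
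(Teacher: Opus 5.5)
Your proof is correct. The paper gives no proof of this lemma; it imports the formula from O'Carroll, Planas-Vilanova and Villarreal, which covers general homogeneous configurations, and only the consequence $\deg(\mX_\uA)=D^{d+1}/\theta_{d+1}$ is used later, in Proposition~\ref{prop:deg_1psing}.

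You instead give a self-contained version of the standard argument. You identify $\theta_{d+1}$ with $[\Z^{d+1}:\Z\uA]$ via Smith normal form. You pass to the saturation $\overline{\mS}=\Z\uA\cap\R^{d+1}_{\geq 0}$ using a conductor element $\bfc$. You then compare partial sums of the Hilbert function with the lattice-point count $|\Z\uA\cap sP|$. Because the cone here is the nonnegative orthant, the volume identity $(d+1)!\,{\rm vol}({\rm conv}(\uA\cup\{\mathbf 0\}))=D^{d+1}$ is immediate, and the whole argument is elementary.

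Simpliciality plays no essential role in Steps 1 and 2. If you replace the orthant by ${\rm cone}(\uA)$ and $sP$ by $s\,{\rm conv}(\uA\cup\{\mathbf 0\})$, the argument goes through essentially verbatim and recovers the general cited statement. What the citation buys the authors is that generality without any work. What your route buys is a transparent proof tailored to the case actually needed.

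You can also bypass the step you flagged as delicate. Sum your sandwich inequality over $t\leq s$ to get $|\Z\uA\cap (s-c)P|\leq\sum_{t\leq s}\hf{\k[\mX_\uA]}(t)\leq|\Z\uA\cap sP|$, with $c=|\bfc|/D$. Both bounds equal $\frac{{\rm vol}(P)}{\theta_{d+1}}s^{d+1}+O(s^{d})$. So you never need the Hilbert polynomial of $\k[\overline{\mS}]$, nor the dimension of the quotient module $\k[\overline{\mS}]/\k[\mS_\uA]$.
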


\begin{proposition} \label{prop:deg_1psing}
Suppose that 
\[\{(D-1)\bfeps_i+\bfeps_j \mid 1\leq i,j \leq d\} \cup \{ e\bfeps_0 + (D-e) \bfeps_j \mid 1\leq j \leq d\}\subset \uA \, ,\] 
where $1\leq e\leq D$ is a divisor of $D$ that divides $a_{i0}$ for all $i=0,\ldots,n$, and let $\mX_\uA$ be the projective toric variety determined by $\uA$. Then, the degree of $\mX_\uA$ is $\deg(\mX_\uA) = \frac{D^d}{e}$.
\end{proposition}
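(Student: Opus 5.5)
The plan is to apply Lemma~\ref{lem:deg_XA}, which gives $\deg(\mX_\uA) = D^{d+1}/\theta_{d+1}$. The whole task is then to show that $\theta_{d+1} = De$, where $\theta_{d+1}$ is the gcd of the maximal minors of the matrix $M$ with columns $\ubfa_0,\ldots,\ubfa_n$.

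First I will simplify $\theta_{d+1}$ by passing to the lattice. The gcd of the maximal minors of $M$ equals the index $[\Z^{d+1} : L]$ when $\operatorname{rank} M = d+1$, where $L = \Z\uA$ is the lattice spanned by the columns. This holds because both quantities are the product of the invariant factors in the Smith normal form of $M$. So it suffices to compute the index of $L = \Z\uA$ in $\Z^{d+1}$.

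Next I will describe $L$ explicitly. Every element of $\uA$ satisfies two conditions: $|\ubfa_i| = D$, and $e \mid a_{i0}$ by hypothesis. Hence
\[L \subseteq L' := \{\bfy \in \Z^{d+1} : D \mid |\bfy|,\ e \mid y_0\}.\]
For the reverse inclusion I will use the listed generators. Taking differences $((D-1)\bfeps_i+\bfeps_j) - D\bfeps_i = \bfeps_j - \bfeps_i$ for $1\le i,j\le d$ gives all vectors $\bfeps_j-\bfeps_i$ with $i,j\ge1$. Also $(e\bfeps_0+(D-e)\bfeps_1) - D\bfeps_1 = e(\bfeps_0-\bfeps_1)$, and $D\bfeps_1 \in L$. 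Now take $\bfy\in L'$. Subtracting $(y_0/e)\cdot e(\bfeps_0-\bfeps_1)$ reduces to $y_0=0$. Using the differences $\bfeps_j-\bfeps_1$, everything can be moved into coordinate $1$. This leaves $|\bfy|\,\bfeps_1$, which is a multiple of $D\bfeps_1$. Hence $L = L'$.

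Finally I compute the index of $L'$. The map $\Z^{d+1}\to \Z/D\times\Z/e$, $\bfy\mapsto(|\bfy| \bmod D,\ y_0 \bmod e)$, has kernel $L'$. It is surjective: $\bfeps_1\mapsto(1,0)$ and $\bfeps_0-\bfeps_1\mapsto(0,1)$. So the index is $De$, giving $\theta_{d+1}=De$ and $\deg(\mX_\uA)=D^{d+1}/(De)=D^d/e$.

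The main point to verify carefully is the identification of the gcd of maximal minors with the lattice index. This is standard via Smith normal form, since $M$ has full rank because $D\bfeps_0,\ldots,D\bfeps_d$ are columns. The generation argument is routine; the case $e=D$ also works, since then $e(\bfeps_0-\bfeps_1)=D\bfeps_0-D\bfeps_1$ is still in $L$.
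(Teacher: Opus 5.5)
Your proof is correct, but it reaches $\theta_{d+1}=De$ by a different route than the paper.

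The paper works directly with the maximal minors of $M$, in two steps:
\begin{enumerate}
\item Every column of $M$ sums to $D$, and row $0$ consists of multiples of $e$. After replacing some row $r\geq 1$ by the sum of all rows, every maximal minor is therefore divisible by $De$.
\item It exhibits one explicit minor equal to $De$. Its columns are $e\bfeps_0+(D-e)\bfeps_1$, $D\bfeps_1$ and $(D-1)\bfeps_i+\bfeps_{i+1}$ for $1\leq i\leq d-1$.
\end{enumerate}

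You instead use the Smith normal form identification $\theta_{d+1}=[\Z^{d+1}:\Z\uA]$. You then compute the lattice exactly as $\Z\uA=\{\bfy : D\mid |\bfy|,\ e\mid y_0\}$, whose index is visibly $De$.

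The two arguments carry the same information. The paper's divisibility step corresponds to your inclusion $\Z\uA\subseteq L'$. Its explicit minor plays the role of your generation argument, since it exhibits $d+1$ elements of $\uA$ spanning a sublattice of index exactly $De$.

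What each buys: the paper's version is elementary and self-contained, needing only row operations and one determinant. Yours relies on the standard Smith normal form fact. In exchange, it yields an explicit description of the group generated by $\uA$, and it makes transparent why $De$ divides every minor, a point the paper states rather tersely.
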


\begin{proof}
Consider the matrix $M$ of size $(d+1)\times (n+1)$ whose columns are the elements of $\uA$. By Lemma~\ref{lem:deg_XA}, the degree of the toric variety $\mX_\uA$ is $\deg(\mX_\uA) = D^{d+1}/\theta_{d+1}$. Since the first row of $M$ is a multiple of $e$ and the sum of the entries in any column is $D$, then $D\cdot e$ divides $\theta_{d+1}$. Moreover, the determinant of the (upper triangular) matrix whose columns are $e\bfeps_0+(D-e)\bfeps_1$, $D\bfeps_1$ and $(D-1)\bfeps_i+\bfeps_{i+1}$, $1\leq i \leq d-1$, is $D \cdot e$. Hence, $\theta_{d+1} = D \cdot e$. Thus, $\deg(\mX_\uA) = \frac{D^{d+1}}{D \cdot e} = \frac{D^d}{e}$, by Lemma~\ref{lem:deg_XA}.
\end{proof}

Therefore, the Eisenbud-Goto bound for a simplicial projective toric variety of dimension $d$ with exactly one singular point, $\mX_\uA \subset \Pn{n}_\k$, parametrized as in Proposition~\ref{prop:charact_1singularpt}~\ref{prop:charact_1singularpt_2}, can be written as
\begin{equation}
\reg{\k[\mX_\uA]} \leq \frac{D^d}{e}-n+d = \frac{D^d}{e}-|\mA|+d+1 .
\label{eq:EGconj_sup1psing}
\end{equation}

\begin{lemma} \label{lem:bound_sizeA}
Let $\mA \subset \N_e^d$ such that $|\bfy| \leq D$ for all $\bfy\in \mA$, and $1\leq e < D$. Then, 
\[|\mA| \leq \frac{\frac{D}{e}+d}{D+d} {D+d \choose d} \, .\]
\end{lemma}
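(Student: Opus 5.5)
The plan is a plain count. Since $\mA \subset \Delta_{1,e}$, it suffices to show
\[|\Delta_{1,e}| \leq \frac{\frac{D}{e}+d}{D+d}\binom{D+d}{d}.\]

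Write $D = me$ with $m = D/e \geq 2$, because $e<D$ and $e$ divides $D$. Points of $\N^d$ with $|\bfy| = k$ number $\binom{k+d-1}{d-1}$. Hence
\[|\Delta_{1,e}| = \sum_{j=0}^{m} \binom{je+d-1}{d-1}.\]
The right-hand side of the target is $\frac{m+d}{D+d}\binom{D+d}{d} = \frac{m+d}{d}\binom{D+d-1}{d-1}$, using $\binom{D+d}{d} = \frac{D+d}{d}\binom{D+d-1}{d-1}$. Setting $f(k) := \binom{k+d-1}{d-1}$, the goal becomes
\[\sum_{j=0}^{m} f(je) \leq \Big(1+\frac{m}{d}\Big) f(me).\]

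The key step is a comparison of each term with the top one. Since $f$ is a polynomial in $k$ of degree $d-1$ with nonnegative coefficients and $f(0)=1$, the function $f(k)/f(K)$ for $0\le k\le K$ should be at most $(k/K)^{d-1}$ plus a correction. More precisely, $f(k)=\prod_{i=1}^{d-1}\frac{k+i}{i}$, so
\[\frac{f(je)}{f(me)}=\prod_{i=1}^{d-1}\frac{je+i}{me+i}\le \prod_{i=1}^{d-1}\frac{j+i}{m+i}=\frac{\binom{j+d-1}{d-1}}{\binom{m+d-1}{d-1}}.\]
This uses that $(je+i)/(me+i)\le (j+i)/(m+i)$ when $j\le m$ and $e\ge1$; it reduces to $i(m-j)(e-1)\ge 0$ after cross-multiplying.

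Summing over $j$ and applying the hockey-stick identity gives
\[\sum_{j=0}^m \frac{f(je)}{f(me)} \le \frac{\binom{m+d}{d}}{\binom{m+d-1}{d-1}} = \frac{m+d}{d}.\]
That is exactly the needed inequality.

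The only delicate point is the termwise ratio comparison, which is the choice that makes the sum telescope cleanly via hockey stick. The hypotheses $e<D$ and $e \mid D$ are used only to set $m=D/e$ as an integer; in fact the bound holds even for $e=D$. Finally, combining $|\mA| \leq |\Delta_{1,e}|$ with the displayed bound proves the lemma.
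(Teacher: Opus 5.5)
Your proof is correct, but it takes a different route from the paper. Both arguments begin with $|\mA|\le\sum_{j=0}^{D/e}f(je)$, where $f(k)=\binom{k+d-1}{d-1}$ counts the points of $\N^d$ on the layer $|\bfy|=k$.

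The paper then uses only that $f$ is nondecreasing. For $i<D/e$ it gets $e\,f(ie)\le|\{\bfy\in\N^d : ie\le|\bfy|<(i+1)e\}|$, so the layers below the top contribute at most $\frac{1}{e}|\{\bfy\in\N^d : |\bfy|<D\}|$. Adding the top layer $f(D)$ gives $\frac{1}{e}\binom{D+d}{d}+\left(1-\frac{1}{e}\right)\binom{D+d-1}{d-1}$, which simplifies to the stated bound.

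You instead normalize by the top layer and use the product formula for $f$ to prove the termwise dilation inequality $f(je)/f(me)\le f(j)/f(m)$. Your reduction of this to $i(e-1)(m-j)\ge 0$ is right. This transfers the sum to the undilated simplex of side $m=D/e$, where the hockey-stick identity finishes it.

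The paper's averaging argument is more robust, since it needs nothing beyond monotonicity of the layer sizes. Yours uses the explicit binomial structure, but gives a clean termwise comparison and reads the bound as $f(D)\binom{m+d}{d}/\binom{m+d-1}{d-1}$, i.e.\ the $e=1$ count for side $m$ rescaled to the top layer. Both routes give exactly the same right-hand side, and neither actually uses $e<D$.

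Like the paper, you use that $e$ divides $D$, which the statement leaves implicit. If it failed, your argument would still work with $m=\lfloor D/e\rfloor$, since $m\le D/e$ and $f(me)\le f(D)$.
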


\begin{proof}
Since $\mA \subset \N_e^d$ and $|\bfy| \leq D$ for all $\bfy\in \mA$, one has that 
\[|\mA| = \sum_{i=0}^{D/e} |\mA \cap \{\bfy \in \N^d : x_1+\dots+x_d = ie\}| \leq \sum_{i=0}^{D/e} {ie+d-1 \choose d-1} \, .\]
On the other hand, note that for all $0\leq i < D/e$,
\[e \cdot {ie+d-1 \choose d-1} \leq | \{\bfy \in \N^d : ie \leq |\bfy| < (i+1)e\}| \, .\]
Therefore, one has that 
\[e \cdot \sum_{i=0}^{D/e-1} {ie+d-1 \choose d-1} \leq |\{\bfy \in \N^d : 0 \leq |\bfy| < D\}| \, ,\]
and hence,
\[\begin{split}
|\mA| \leq \sum_{i=0}^{D/e} {ie+d-1 \choose d-1} &\leq \frac{1}{e} |\{\bfy \in \N^d : 0 \leq |\bfy| < D\}| + |\{\bfy \in \N^d : |\bfy| = D\}| \\
&= \frac{1}{e} {D+d \choose d} + \left( 1-\frac{1}{e} \right) {D+d-1 \choose d-1}  \\
&= \frac{\frac{D}{e}+d}{D+d} {D+d \choose d} \, .
\end{split}\]
\end{proof}

The last ingredients for the proof of Theorem~\ref{thm:EG_surfaces_1psing} are the following two lemmas.

\begin{lemma}[{\cite[Lemma~1.2]{Herzog2003}}] \label{lem:EG_e=D}
If $D\geq 3$ and $d\geq 3$, then 
\[(d-1)(D-2) \leq D^{d-1} - {D+d-1 \choose d-1} +d \, .\]
\end{lemma}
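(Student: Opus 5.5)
We argue by induction on $d\geq 3$, with $D\geq 3$ fixed. Set
\[
f(d) := D^{d-1} - {D+d-1 \choose d-1} + d - (d-1)(D-2);
\]
the claim is that $f(d)\geq 0$ for all $d\geq 3$. Since this is a purely numerical inequality, we establish the base case $d=3$ explicitly and then show that $f$ is nondecreasing in $d$.

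\emph{Base case $d=3$.} Expanding ${D+2\choose 2}=\frac{(D+2)(D+1)}{2}$ gives
\[
f(3)=D^2-\frac{D^2+3D+2}{2}+3-2(D-2)=\frac{D^2-7D+12}{2}=\frac{(D-3)(D-4)}{2}.
\]
For integers $D\geq 3$ this is nonnegative: it vanishes at $D=3$ and $D=4$ and is positive for $D\geq 5$.

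\emph{Inductive step.} Pascal's rule gives ${D+d\choose d}-{D+d-1\choose d-1}={D+d-1\choose d}$. Hence
\[
f(d+1)-f(d)=(D-1)D^{d-1}-{D+d-1\choose d}-(D-3).
\]
It therefore suffices to show ${D+d-1\choose d}\leq (D-1)D^{d-1}-(D-3)$ for $d\geq 2$. To do this, write
\[
{D+d-1\choose d}=\prod_{k=1}^{d}\frac{D-1+k}{k}.
\]
The factor $k=1$ equals $D$ and the factor $k=2$ equals $\frac{D+1}{2}$. For $k\geq 3$ each factor is at most $D$, since $D-1+k\leq kD$ is equivalent to $(k-1)(D-1)\geq 0$. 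This gives
\[
{D+d-1\choose d}\leq \frac{D+1}{2}\,D^{d-1}.
\]
The required inequality then reduces to $\frac{D-3}{2}D^{d-1}\geq D-3$. This holds because $D\geq 3$ and $D^{d-1}\geq 2$.

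Thus $f(d+1)\geq f(d)\geq 0$, which completes the induction. The only real obstacle is finding a usable upper bound for the binomial coefficient ${D+d-1\choose d}$ in the inductive step. The factorwise estimate above, which keeps the exact value $\frac{D+1}{2}$ of the second factor, is just sharp enough for this.
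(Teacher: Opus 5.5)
Your proof is correct. The closed form $f(3)=\frac{(D-3)(D-4)}{2}$, the increment $f(d+1)-f(d)=(D-1)D^{d-1}-{D+d-1\choose d}-(D-3)$ obtained from Pascal's rule, and the factorwise bound ${D+d-1\choose d}\le \frac{D+1}{2}D^{d-1}$ all check out. Together they reduce the inductive step to $(D-3)\bigl(\tfrac12 D^{d-1}-1\bigr)\ge 0$.

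The paper does not prove this inequality at all. It imports it from Herzog--Hibi \cite[Lemma~1.2]{Herzog2003}, so your argument is a self-contained replacement rather than a variant of a proof in the text.

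The closest analogue in the paper is its proof of the companion inequality in Lemma~\ref{lem:EG_e_neq_D}. That proof has the same architecture: an explicit base case $d=3$, then induction on $d$, splitting the new expression into an increment plus the old expression and using the product formula $\prod_i(1+D/i)$ for the binomial. The difference is in how the increment is handled. There, it is only bounded from below, through a chain of estimates such as $\prod_{i=1}^{d}(1+D/i)\le D^d$, because the extra parameter $e$ is present. You compute the increment exactly, so the only estimate you need is the single factorwise bound on ${D+d-1\choose d}$. This makes every inequality transparent.

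Your closed forms also show where the hypotheses bite. The base case is an equality for $D\in\{3,4\}$, so it has no slack. The value $f(2)=3-D$ explains why $d\ge 3$ is required: the inequality fails at $d=2$ for all $D\ge 4$.
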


\begin{lemma} \label{lem:EG_e_neq_D}
For all $D\geq 3$, $d\geq 3$, and $1\leq e < D$, where $e$ is a divisor of $D$, one has that
\[\frac{D}{e} \left[ (d-1)(D-2)+\frac{D}{e}-2 \right] \leq \frac{D^d}{e}- \frac{\frac{D}{e}+d}{D+d} {D+d \choose d} +d .\]
\end{lemma}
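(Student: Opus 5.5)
Set $k := D/e$, an integer with $k \geq 2$ because $e<D$ and $e \mid D$. After multiplying by $e$, the inequality of Lemma~\ref{lem:EG_e_neq_D} reads
\[ D\bigl[(d-1)(D-2)+k-2\bigr] \;\leq\; D^d - \frac{e(k+d)}{D+d}\binom{D+d}{d} + de . \]
We also have $\frac{e(k+d)}{D+d} = \frac{D+de}{D+d}$. The right-hand side is therefore $D^d + de - \frac{D+de}{D+d}\binom{D+d}{d}$. For fixed $D,d$ it is affine in $e$, with slope $d - \frac{d}{D+d}\binom{D+d}{d}$, which is negative because $\binom{D+d}{d} > D+d$. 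The left-hand side decreases in $e$ through $k = D/e$, with slope $-D^2/e^2$ in $e$. So first I would reduce to the worst value of $e$. Since both sides decrease, I would instead bound the two pieces separately. The left-hand side is at most $D[(d-1)(D-2)+D/2-2]$, attained at $k \leq D/2$ only when $e \geq 2$; if $e=1$ then $k=D$. The right-hand side is smallest at the largest admissible $e$, namely $e \leq D/2$. So I would split into the cases $e=1$ and $2 \leq e \leq D/2$.

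\emph{Case $e=1$.} Here the claim is $D[(d-1)(D-2)+D-2] = d D(D-2) \leq D^d + d - \binom{D+d}{d}$. Compare with Lemma~\ref{lem:EG_e=D} applied with $d+1$ in place of $d$, which gives $d(D-2) \leq D^d - \binom{D+d}{d} + d + 1$. That is weaker by a factor $D$ on the left, so it is not enough. Instead, I would prove the inequality directly by induction on $d \geq 3$. For the base case $d=3$, one checks $3D(D-2) \leq D^3 + 3 - \binom{D+3}{3}$ as a cubic polynomial inequality in $D \geq 3$. At $D=3$ this is $9 \leq 27+3-20 = 10$, and the polynomial $D^3 - \frac{(D+3)(D+2)(D+1)}{6} - 3D^2 + 6D + 3$ has positive leading coefficient $5/6$, so it suffices to check it is increasing for $D \geq 3$. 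For the inductive step from $d$ to $d+1$, the right-hand side gains $D^{d+1} - D^d + 1 - \binom{D+d}{d+1}$, while the left gains $D(D-2)$. Use $\binom{D+d}{d+1} \leq \frac{D+d}{d+1}\binom{D+d-1}{d}$ together with the inductive control $\binom{D+d-1}{d} \leq D^{d-1}$ (which holds for $D,d\geq 3$, also checked by induction). This gives an increment of at least $D^d(D-1) - \frac{D+d}{d+1}D^{d-1} \geq D(D-2)$.

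\emph{Case $2 \leq e \leq D/2$.} This is the same argument with the extra slack $-(D-e)\cdot(\text{stuff})$. The right-hand side is at least its value at $e = D/2$, namely $D^d + \frac{dD}{2} - \frac{D(1+d/2)}{D+d}\binom{D+d}{d}$. This equals $D^d + \frac{dD}{2} - \frac{(d+2)}{2}\binom{D+d-1}{d-1}$. The left-hand side is at most $D[(d-1)(D-2)+D/2-2]$. I would again do the base case $d=3$ as an explicit polynomial inequality in $D \geq 4$ (here $D \geq 4$ because $D$ must be even to allow $e=D/2 \geq 2$; more precisely $D \geq 2e \geq 4$). Then I would induct on $d$ using $\binom{D+d}{d} \leq \frac{D+d}{d}\binom{D+d-1}{d-1}$.

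The main obstacle is the base-case and inductive-step polynomial estimates, especially small $D$ ($D=3$ or $4$) where the margin is tight; there I would verify the boundary values by direct computation. The fallback if the monotonicity-in-$e$ reduction fails is to treat the inequality as a function of the real variable $e \in [1, D/2]$. The right-hand side minus the left-hand side, multiplied by $e$, is then concave or convex in $e$, so it suffices to check the endpoints.
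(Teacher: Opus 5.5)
Your plan is viable: first remove the dependence on $e$, then induct on $d\ge 3$ from a polynomial base case at $d=3$. It is close to the paper's approach, and your $e=1$ base case ($5D^3-24D^2+25D+12\ge 0$) is exactly the paper's. However, the only inductive step you actually write out rests on a false inequality. The bound $\binom{D+d-1}{d}\le D^{d-1}$ already fails at $D=d=3$, where $\binom{5}{3}=10>9$. For fixed $d$ it fails for all sufficiently large $D$, since the left side has degree $d$ in $D$. So your claimed gain $D^d(D-1)-\frac{D+d}{d+1}D^{d-1}$ is unjustified. The step is easily repaired with the trivial bound $\binom{D+d-1}{d}=\prod_{i=1}^{d}\bigl(1+\frac{D-1}{i}\bigr)\le D^d$. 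This gives a gain of at least $D^d\bigl(D-1-\frac{D+d}{d+1}\bigr)+1=\frac{d(D-2)-1}{d+1}D^d+1$, which is $\ge D(D-2)$ for $D,d\ge 3$. That is the same kind of estimate the paper uses ($\prod_{i=1}^d(1+\frac{D}{i})\le D^d$). In the paper a single inductive step works uniformly for all $1\le e\le D/2$, so the $e$-dependence only has to be handled in the base case (split into $e=1$, $2\le e<D/2$, $e=D/2$). Your separate inductions per case are extra work.

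The case $2\le e\le D/2$ contains a second error. The correct identity is $\frac{D(1+d/2)}{D+d}\binom{D+d}{d}=\frac{d+2}{2}\binom{D+d-1}{d}$, not $\frac{d+2}{2}\binom{D+d-1}{d-1}$. Since $\binom{D+d-1}{d}=\frac{D}{d}\binom{D+d-1}{d-1}$, your expression overstates the right-hand side whenever $D>d$. For $D=4$, $d=3$ the true value is $20$ and yours is $32.5$, so proving your version would not prove the lemma. With the correct value, your decoupled base case becomes $\frac{D}{12}(7D^2-45D+80)\ge 0$, which holds for all $D$ (negative discriminant). The induction in $d$ then does go through, but you only assert it (\emph{same argument with extra slack}) without carrying it out.

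Your \emph{fallback} is actually the cleanest reduction, once you commit to the right shape. After multiplying by $e$, the right-hand side minus the left-hand side is affine in $e$ plus $-D^2/e$, hence \emph{concave} on $e>0$. So its minimum on $[1,D/2]$ is attained at $e=1$ or at the real value $e=D/2$. Convexity would not permit this endpoint check, so the \emph{or convex} hedge must go. At $d=3$, $e=D/2$ this endpoint check reduces to $7D^2-39D+56>0$, which is the paper's third subcase.
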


\begin{proof}
The inequality is equivalent to 
\[D^d- \frac{D+de}{D+d} {D+d \choose d} \geq D \left[ (d-1)(D-2)+\frac{D}{e}-2 \right] -ed.\]
Thus, we must prove
\begin{equation}
D^d-  \frac{D+de}{D+d} \prod_{i=1}^d \left( 1 + \frac{D}{i} \right) \geq D \left[ (d-1)(D-2)+\frac{D}{e}-2 \right] -ed
\label{eq:lemma_EG}
\end{equation}
for $d\geq 3$, $D\geq 3$, and $1\leq e < D$. Fix $D\geq 3$. We prove \eqref{eq:lemma_EG} by induction on $d\geq 3$. 

Assume that $d=3$. Equation~\eqref{eq:lemma_EG} is equivalent to 
\begin{equation}
5eD^3 - (3e^2+15e+6)D^2 + (34e-9e^2)D+12e^2 \geq 0 \, .  
\label{eq:lemma_EG_d=3}
\end{equation}
Let us show that it holds for all $1\leq e < D$.
\begin{itemize}
    \item If $e=1$, \eqref{eq:lemma_EG_d=3} becomes $5D^3-24D^2+25D+12 \geq0$, which holds for all $D\geq 3$.
    \item If $2 \leq e < D/2$, then we have that 
    \[\begin{split}
     5 \left( \frac{D}{e} \right)^3 - &\left( 3+\frac{15}{e}+\frac{6}{e^2} \right) \left( \frac{D}{e} \right)^2  + \left( \frac{34}{e^2} - \frac{9}{e} \right) \frac{D}{e} + \frac{12}{e^2} \\
     &\geq 5 \left( \frac{D}{e} \right)^3 - \left( 3+\frac{15}{2}+\frac{6}{4} \right) \left( \frac{D}{e} \right)^2 - \frac{9}{2} \frac{D}{e} \\
     &= \frac{D}{e} \left[5 \left( \frac{D}{e} \right)^2 - 12 \frac{D}{e} - \frac{9}{2}  \right] \geq 0
    \end{split}\]
    for all $D/e \geq 3$, and hence \eqref{eq:lemma_EG_d=3} holds for all $2\leq e < D/2$. 
    \item If $D$ is even and $e=D/2$, then \eqref{eq:lemma_EG_d=3} is equivalent to $D^2(7D^2-39D+56) \geq 0$, which holds for all $D\geq 0$.
\end{itemize}

Take $d\geq 3$ and suppose that the inequality \eqref{eq:lemma_EG} is true for $d$. Inequality \eqref{eq:lemma_EG} for $d+1$ then follows from the computation below:

{
\allowdisplaybreaks
\begin{align*}
&{} D^{d+1} - \frac{D+de+e}{D+d+1} \prod_{i=1}^{d+1} \left( 1+\frac{D}{i} \right) \\
&= (D^{d+1}-D^d) - \frac{D+de+e}{D+d+1} \prod_{i=1}^{d+1} \left( 1+\frac{D}{i} \right) + \frac{D+de}{D+d} \prod_{i=1}^{d} \left( 1+\frac{D}{i} \right) \\ 
&\qquad + D^d- \frac{D+de} {D+d} \prod_{i=1}^d \left( 1+\frac{D}{i} \right) \\
&\geq D^d(D-1) + \left[ \frac{D+de}{D+d} - \frac{D+de+e}{D+d+1} \left( 1 + \frac{D}{d+1} \right) \right] \prod_{i=1}^{d} \left( 1+\frac{D}{i} \right) \\
&\qquad+ D \left[ (d-1)(D-2)+\frac{D}{e}-2 \right] -ed \\
&= D^d(D-1) + \left( \frac{D+de}{D+d} - \frac{D}{d+1} -e \right) \prod_{i=1}^{d} \left( 1+\frac{D}{i} \right)   \\
&\qquad + D \left[ (d-1)(D-2)+\frac{D}{e}-2 \right] -ed \\
&\geq D^d(D-1) + \left( 1 - \frac{D}{d+1} -e \right) D^d + D \left[ (d-1)(D-2)+\frac{D}{e}-2 \right] -ed \\
&=  D^d \left( D- \frac{D}{d+1} -e \right) -D(D-2) +e  + D \left[ d(D-2)+\frac{D}{e}-2 \right] - e(d+1). \\
\end{align*}
}

To conclude the proof we just need to show that $D^d \left( D -\frac{D}{d+1} -e \right) -D(D-2)+e \geq 0$. Since $D\geq 3$, $d\geq 3$ and $1\leq e \leq D/2$, then one can bound
\[D-\frac{D}{d+1} -e \geq D-\frac{D}{4} -\frac{D}{2} = \frac{D}{4} .\]
Therefore, applying again that $d\geq 3$ one has that 
\[D^d \left( D-\frac{D}{d+1} -e \right) -D(D-2)+e \geq \frac{D^4}{4}-D^2+2D+1 \geq 0 \, , \quad\forall D\geq 0.\]

\end{proof}

\begin{proof}[Proof of Theorem~\ref{thm:EG_surfaces_1psing}]
Without loss of generality, we may assume that the only singular point of $\mX$ is $P_0 = (1:0:\dots:0)$. Then, there is a set $\mA \subset \N^d$ as in Subsection~\ref{subsec:structure_1psing} such that $\mX = \mX_\uA$, i.e., 
\[\{\mathbf{0}\} \cup \{D\bfeps_i' \mid 1\leq i \leq d \} \cup \{ (D-1)\bfeps_i'+\bfeps_j' \mid 1\leq i,j \leq d \} \cup \{(D-e) \bfeps_j' \mid 1\leq j \leq d\} \subset \mA,\]
where $|\bfa_i| \leq D$,
$1\leq e \leq D$ is a divisor of $D$ that divides $|\bfa_i|$ for all $i\in \{0,\ldots,n\}$; and if $e=1$, then there exists $j\in \{1,\ldots,d\}$ such that $\bfeps_j' \notin \mA$.

If $D = 2$, then $e=2$ and $|\mA| \leq {d+1 \choose 2}+1 $. By Corollary \ref{cor:reg_1sing}, we have that 
\[\reg{\k[\mX_\uA]} \leq \left\lceil \frac{d-1}{2} \right\rceil \leq 2^{d-1} - {d+1 \choose 2} +d \leq 2^{d-1} - |\mA| +d+1 \, ,\]
and hence the Eisenbud-Goto bound  \eqref{eq:EGconj_sup1psing} holds for $\k[\mX_\uA]$.

If $D \geq 3$ and $1\leq e  < D$, by Corollary \ref{cor:reg_1sing}, Lemma \ref{lem:EG_e_neq_D}, Lemma~\ref{lem:bound_sizeA}, and Proposition~\ref{prop:deg_1psing} one has that 
\[\begin{split} 
\reg{\k[\mX_\uA]} & \leq  \frac{D}{e} \left[ (d-1)(D-2)+\frac{D}{e}-2 \right] +1 \\
&\leq \frac{D^d}{e} - \frac{\frac{D}{e}+d}{D+d} {D+d \choose d} +d +1 \\
&\leq \frac{D^d}{e} - |\mA| +d+1 = \deg(\mX_\uA) -|\mA|+d+1 \, ,
\end{split}\]
and hence the Eisenbud-Goto bound  \eqref{eq:EGconj_sup1psing} holds for $\k[\mX_\uA]$.

Finally, if $D \geq 3$ and $e=D$, one has that $|\mA| \leq {D+d-1 \choose d-1} +1$. By Corollary \ref{cor:reg_1sing}, Lemma~\ref{lem:EG_e=D}, and Proposition~\ref{prop:deg_1psing} one has that 
\[
\reg{\k[\mX_\uA]} \leq  (d-1)(D-2) \leq D^{d-1} - {D+d-1 \choose d-1} +d \leq D^{d-1}-|\mA|+d+1 \, ,
\]
and hence the Eisenbud-Goto bound  \eqref{eq:EGconj_sup1psing} holds for $\k[\mX_\uA]$.
\end{proof}

\section*{Conclusions}

Given a projective toric variety $\mX_\uA \subset \P_\k^n$, we investigated the Castelnuovo--Mumford regularity of $\k[\mX_\uA]$ via the sumsets of $\mA \subset \N^{d}$. For simplicial toric varieties with at most one singular point, we introduced the notion of sumsets regularity $\sigma(\mA)$, which measures the point from which the sumsets of $\mA$ exhibit predictable behavior. In Theorems \ref{thm:sigma_smooth} and \ref{thm:sigma_1psing_general} we established upper bounds for $\sigma(\mA)$. Furthermore, we proved that $\reg{\k[\mX_\uA]} = \sigma(\mA)$ in the smooth case (Theorem \ref{thm:reg_sigma_smooth}), while $\reg{\k[\mX_\uA]} \leq \sigma(\mA)+1$ when $\mX_\uA$ has exactly one singular point (Theorem \ref{thm:reg_sigma_1psing}). These results yield explicit upper bounds for the Castelnuovo--Mumford regularity of $\k[\mX_\uA]$ in both settings. In the smooth case, we also gave an alternative proof of a result of Herzog and Hibi \cite{Herzog2003}. In the singular case, when $d \ge 3$, our results identify new families of non-smooth varieties that satisfy the Eisenbud--Goto bound.

A natural direction for future research is to investigate whether the techniques developed here can be used to prove that projective simplicial toric surfaces satisfy the Eisenbud--Goto bound. Partial results in this direction appear in \cite{MarioTesis}. Another interesting problem is whether these methods extend to simplicial, or even non-simplicial, toric varieties with more general singular loci.

\end{document}